\documentclass[11pt,a4paper]{article}
\usepackage[lmargin=1.0in,rmargin=1.0in,bottom=1.0in,top=1.0in,twoside=False]{geometry}
\usepackage{hchang}
\usepackage{mathtools}
\usepackage{graphicx}
\usepackage{enumerate}
\usepackage{tikz}
\usetikzlibrary{calc}
\usetikzlibrary{shapes}
\usetikzlibrary{decorations.pathmorphing}
\usetikzlibrary{decorations.pathreplacing, calligraphy}
\usetikzlibrary{backgrounds}
\usetikzlibrary{arrows.meta,calc}

\colorlet{vertexhl}{orange!75}   
\colorlet{edgehl}{orange!75}     
\colorlet{bsA}{red!55}
\colorlet{bsB}{blue!45}
\colorlet{bsC}{green!75!black}
\colorlet{bsD}{orange!80}
\colorlet{bsE}{violet!50}

\colorlet{fatedge}{red!60}      
\colorlet{fatvertex}{blue!40}   
\newcommand{\fatvertexopacity}{1} 
\newcommand{\fatedgeopacity}{1}     
\newcommand{\fatvertexrad}{5.8pt}   
\newcommand{\fatvertexwidth}{14pt} 
\newcommand{\fatedgerad}{2.3pt}     
\newcommand{\fatedgewidth}{5.5pt}  

\newcommand{\subclique}[4][1.5]{
\begin{tikzpicture}[scale=#1]
  \foreach \i in {1,...,#3} {
    \node[circle, draw, fill=black, inner sep=0.8pt] (N\i) at ({360/#3 * (\i - 1)}:2) {};
  }
  \foreach \i in {1,...,#3} {
    \foreach \j in {\i,...,#3} {
      \draw (N\i.center) -- (N\j.center)
      \foreach \k in {1,...,#2} {
      node[circle, draw, fill=black, inner sep=1pt, pos=\k/(#2+1)] (M\i\j\k) {}
      };
    }
  }
  \begin{scope}[on background layer]
    #4
  \end{scope}
\end{tikzpicture}
}
\newcommand{\spiderweb}[4][1.5]{
\begin{tikzpicture}[scale=#1]
  \node (A) at (0,0) {};
  \foreach \i in {1,...,#3} {
    \node[circle, draw, fill=black, inner sep=0.8pt] (N\i) at ({360/#3 * (\i - 1)}:2) {};
  }
  \foreach \i in {1,...,#3} {
    \foreach \j in {\i,...,#3} {
      \draw (N\i.center) -- (N\j.center)
      \foreach \k in {1,...,#2} {
      node[circle, draw, fill=black, inner sep=1pt, pos=\k/(#2+1)] (M\i\j\k) {}
      };
    }
  }
  \foreach \i in {1,...,#3} {
    \foreach \j in {\i,...,#3} {
        \foreach \k in {\i,...,#3} {
            \draw (M\i\j1.center) -- (M\i\k1.center);
        }
        \foreach \k in {1,...,\i} {
            \draw (M\i\j1.center) -- (M\k\i#2.center);
        }
    }
    \foreach \j in {1,...,\i} {
        \foreach \k in {1,...,\i} {
            \draw (M\j\i#2.center) -- (M\k\i#2.center);
        }
    }
  }
  \begin{scope}[on background layer]
    #4
  \end{scope}
\end{tikzpicture}
}

\tikzset{snake it/.style={decorate, decoration=snake}}

\usepackage{caption}   
\usepackage{subcaption} 
\usepackage{thmtools}
\usepackage{thm-restate}
\usepackage{float}
\usepackage[T1]{fontenc}

\usepackage{enumitem}

\usepackage{microtype}
\usepackage{comment}
\usepackage[capitalize,nameinlink]{cleveref}
\usepackage[normalem]{ulem}

\usepackage{stmaryrd}
\usepackage{marvosym}
\usepackage{booktabs}
\usepackage{multirow}
\usepackage{makecell}
\usepackage{nicefrac}

\newif\ifanonymous

\ifanonymous%
\else%
\nolinenumbers%
\fi%

\newtheorem{lemma}{Lemma}[section]
\newtheorem{theorem}[lemma]{Theorem}

\newtheorem{claim}[lemma]{Claim}

\newtheorem{definition}[lemma]{Definition}

\newtheorem{observation}{Observation}

\newcommand{\Oh}{\mathcal{O}}
\newcommand{\dist}{\mathsf{dist}}
\newcommand{\prof}[2]{\mathsf{prof}_{#1}[#2]}

\newcommand{\eps}{\varepsilon}

\newcommand{\len}{\mathsf{len}}
\setuptodonotes{size=\normalsize}

\newcommand{\CC}{\mathscr{C}}
\newcommand{\Cc}{\CC}

\newcommand{\wh}[1]{#1'}

\newcommand{\val}{\mathsf{cost}}
\newcommand{\OPT}{\mathsf{OPT}}

\newcommand{\Ball}{\mathrm{Ball}}
\renewcommand{\setminus}{-}

\newcommand{\Rnn}{\R_{\geq 0}}
\newcommand{\Rp}{\R_{>0}}

\renewcommand{\leq}{\leqslant}
\renewcommand{\geq}{\geqslant}
\renewcommand{\le}{\leqslant}
\renewcommand{\ge}{\geqslant}

\newcommand{\patternfont}[1]{\mathbf{#1}}
\newcommand{\clique}[2]{\patternfont{K}(#1,#2)}
\newcommand{\web}[2]{\patternfont{W}(#1,#2)}
\newcommand{\biclique}[3]{\patternfont{K}(#1,#2,#3)}
\newcommand{\biweb}[3]{\patternfont{W}(#1,#2,#3)}
\newcommand{\mixweb}[3]{\patternfont{M}(#1,#2,#3)}

\newif\ifclaimproofouterqed
\newenvironment{claimproof}{%
 \ifneedqed
  \claimproofouterqedtrue
 \else
  \claimproofouterqedfalse
 \fi
 \renewcommand{\QED}{\ensuremath{\lrcorner}}%
 \begin{proof}%
}{%
 \end{proof}%
 \ifclaimproofouterqed
  \global\needqedtrue
 \else
  \global\needqedfalse
 \fi
}

\newcommand{\niko}[1]{\todo[color=green!40,textcolor=black,bordercolor=black]{Niko: #1}}
\newcommand{\nikoin}[1]{\todo[inline,size=\normalsize,color=green!40,textcolor=black,bordercolor=black,caption={}]{Niko: #1}}

\newcommand{\affi}[1]{\textcolor{black!50}{#1}}
\usepackage{hyperref}

\begin{document}

\ifanonymous
\author{Anonymous}
\else
\author{
	Arnold Filtser\\{\small \affi{Bar-Ilan University}}\\\href{mailto:arnold.filtser@biu.ac.il}{\small arnold.filtser@biu.ac.il}\vspace{0.5cm}
    \and
	Hung Le\\{\small \affi{University of Massachusetts, Amherst}}\\\href{mailto:hungle@cs.umass.edu}{\small hungle@cs.umass.edu}
    \and
	Nikolas Mählmann\\{\small \affi{University of Warsaw}}\\\href{mailto:maehlmann@mimuw.edu.pl}{\small maehlmann@mimuw.edu.pl}
    \and
	Marcin Pilipczuk\\{\small \affi{University of Warsaw}}\\\href{mailto:malcin@mimuw.edu.pl}{\small malcin@mimuw.edu.pl}
    \and
	Michał Pilipczuk\\{\small \affi{University of Warsaw}}\\\href{mailto:michal.pilipczuk@mimuw.edu.pl}{\small michal.pilipczuk@mimuw.edu.pl}
}

\fi
\renewcommand\footnotemark{}
\title{\huge{Fatness and Flatness}\thanks{%
\begin{minipage}[t]{0.30\textwidth}
\raisebox{0.3cm}{\includegraphics[scale=0.15]{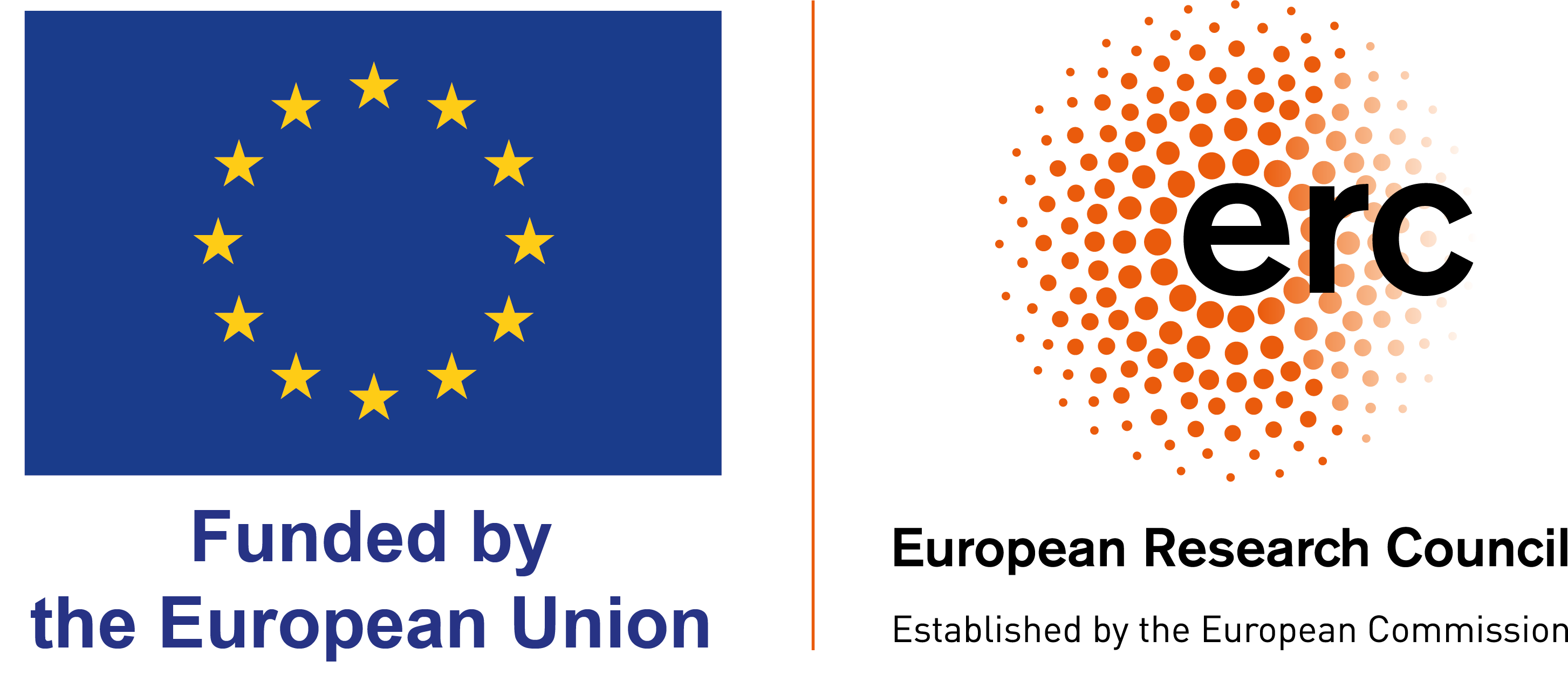}}
\end{minipage}\hfill
\begin{minipage}[b]{0.66\textwidth}\footnotesize
NM and MaP received funding from the ERC grant BUKA (No.~101126229).
NM also received funding from the European Union through an ERA Fellowship with grant agreement No.\ 101334340 -- LoCoMoDe.
MiP was supported by the project BOBR that has received funding from ERC
under the European Union's Horizon 2020 research and innovation programme,
grant agreement No.~948057. HL was supported by NSF grant CCF-2517033 and NSF CAREER Award CCF-2237288.
AF was supported by the Israel Science Foundation (grant No. 1042/22).
\end{minipage}
\begin{minipage}[b]{\textwidth}
\medskip
Substantial part of the research has been completed at the workshop `Global Structure and Geometry of Graphs' at MATRIX in Creswick, Australia, April 2026.
We thank MATRIX for an extremely comfortable and productive research environment.
\end{minipage}}}

\date{}

\maketitle

\begin{abstract}
 \emph{Fat minors} are the metric analog of graph minors that are tailored to the analysis of structure in metric (edge-weighted) graphs and, more generally, metric spaces equipped with a suitable notion of shortest paths. Despite a large interest in this notion, not much is known about the structure of metric graphs excluding a fixed fat minor.

 In this work we prove that if a metric graph $G$ excludes a fixed graph $H$ as a $\delta$-fat minor, for some $\delta>0$, then $G$ enjoys the metric analog of \emph{flatness} (also known as \emph{uniform quasi-wideness}) --- a structural property playing a fundamental role in the field of Sparsity. In essence, our flatness result says that for any $\alpha\geq \beta$ that are large enough compared to $\delta$, in every large enough set $A$ in $G$ one can find a sizable subset $B$ that becomes $\alpha$-scattered after removing from $G$ a bounded number of balls of radius at most $\beta$. We call this property \emph{drill-flatness}. Notably, the proof only relies on excluding fat minors that are \emph{shallow}: every branch set has radius at most $2\alpha$.

 As a corollary, we prove that metric graphs that exclude a fixed $\delta$-fat minor have bounded $\eps$-scatter dimension if we consider only $\eps$-scatters at distances that are large enough compared to~$\delta$. By combining this with the results of Abbasi et al.~[FOCS 2023], we conclude that the \textsc{$k$-Center} problem admits an approximation algorithm that, when applied to an edge-weighted graph excluding a fixed graph $H$ as a $\delta$-fat minor, finds a solution of cost at most $(1+\eps)\cdot\mathsf{OPT}+\Oh(\delta/\eps^2)$
 in time $\Oh_{H,\eps}(n^{\Oh(1)})$. This 
 statement can be extended to the general \textsc{Norm $k$-Clustering} problem studied by Abbasi et al., and thus also applies to \textsc{$k$-Median} and \textsc{$k$-Means}, and 
 constitutes one of the first algorithmic results achieved for general fat-minor-free metrics.


 We continue our study of drill-flatness in hereditary classes of (unweighted) graphs, where we obtain the following structural characterization in terms of induced minors. For every hereditary graph class $\CC$ the following are equivalent: (1) $\CC$ is induced-shallow-minor-free, (2) $\CC$ is fat-shallow-minor-free, (3) $\CC$ is drill-flat, (4) $\CC$ excludes certain variations of subdivided cliques as induced subraphs.
 This is an induced analog of the equivalence between the notions of flatness and of nowhere denseness --- one of the foundational results of Sparsity.

\end{abstract}

\vfill



\thispagestyle{empty}

\newpage
\thispagestyle{empty}
\tableofcontents
\thispagestyle{empty}
\newpage

\clearpage
\setcounter{page}{1}

\section{Introduction}\label{sec:intro}

\noindent\textbf{Clustering in structured metric spaces.}
Clustering forms a vast subarea of modern theoretical and applied research on algorithms.
Decades of research uncovered a curious interplay of various clustering problems
and assumptions on the properties of the input metric space that allow for efficient algorithms. 

In this work, we are interested in \emph{efficient parameterized approximation schemes}
(EPASes): $(1+\eps)$-approximation algorithms that run in time $h(k,\eps)\cdot \mathrm{poly}(n)$
for input consisting of $n$ points, asked to be divided into $k$ clusters. 
(Here, $h$ is an arbitrary computable function, usually exponential.)
A recent work of Abbasi, Banerjee, Byrka, Chalermsook, Gadekar, Khodamoradi, Marx, Sharma, and Spoerhase~\cite{ABBCGKMSS23} uncovered an invariant of a metric space,
dubbed \emph{$\eps$-scatter dimension}, whose boundedness allows for EPASes for a large
family of $k$-clustering problems under a wide umbrella of 
the \textsc{Norm $k$-Clustering} problem.

An \EMPH{$(\eps,r)$-scatter} of order $\ell$ in a metric space $G$ is a pair of sequences
of points $(a_1,\ldots,a_\ell)$ and $(b_1,\ldots,b_\ell)$ such that
\begin{itemize}
    \item $\dist_G(a_i,b_i)>(1+\eps)r$ for all $i\in \{1,\ldots,\ell\}$, and
    \item $\dist_G(a_i,b_j)\leq r$ for all $1\leq i<j\leq \ell$.
\end{itemize}
In~\cite{ABBCGKMSS23}, it is shown that edge-weighted planar graphs have bounded
\EMPH{$\eps$-scattered dimension} in the following sense:
the largest order of an $(\eps,r)$-scatter, for any $r\in \Rp$,
in an edge-weighted planar graph is bounded by a function of $1/\eps$.
Later, Bourneuf and Ma.~Pilipczuk~\cite{BP25} generalized this result to edge-weighted
$H$-minor-free graphs, for any fixed graph~$H$,
using metric analogs of tools from the graph-theoretic field of Sparsity~\cite{sparsity}, such as the weak coloring numbers
and flatness.

The \textsc{Norm $k$-Clustering} problem is defined as follows.
On input, we are given an integer $k\in \N$, a metric space $G$,
a set of potential facilities $F\subseteq G$, and a multiset of clients $C\subseteq G$.
There is also a fixed norm $\|\cdot\|$ on $\R^C$ that is polynomial-time computable and \EMPH{monotone} in the following sense:
if for $\bar x,\bar y\in \R_{\geq 0}^C$ we have that $\bar x$ is not larger than $\bar y$ on every coordinate, then $\|\bar x\|\leq \|\bar y\|$.
A \EMPH{solution} to the problem is a subset $S\subseteq F$ of at most $k$ facilities to be opened, and the \EMPH{cost} of such a solution is $\|(\dist_G(c,S)\colon c\in C)\|$, where $\dist_G(c,S)$ denotes the distance from $c$ to the closest facility in $S$. The goal is to find a solution with the smallest possible cost. By using norms $\ell_\infty$, $\ell_1$, and $\ell_2$, this general formulation models problems \textsc{$k$-Center}, \textsc{$k$-Median}, and \textsc{$k$-Means}.
In~\cite{ABBCGKMSS23}, it is proved that if the metric space $G$ has $\eps$-scatter dimension bounded by some function $h(1/\eps)$,
then the \textsc{Norm $k$-Clustering} on $G$ admits an EPAS.

The starting point of this work is the challenge to extend the applicability of the results of~\cite{ABBCGKMSS23}
to metric spaces excluding \EMPH{fat minors}.

\medskip

\noindent\textbf{Fat minors} are a generalization of the classic concept of graph minors that in addition incorporate metric constraints. Proposed by Georgakopoulos and Papasoglu~\cite{DBLP:journals/combinatorica/GeorgakopoulosP25}, they lie at the foundations of \emph{coarse graph theory}: a relatively young, but very dynamic branch of structural graph theory that investigates metric-related structure in graphs and more generally, metric spaces.

Let $G$ be a length space\footnote{A \EMPH{length space} is a metric space where for any two points $x,y$, the distance between $x$ and $y$ is equal to the infimum of the lengths of paths connecting $x$ and $y$. This notion can be understood as a robust generalization of edge-weighted graphs to a more continuous setting, where the metric space can be endowed with a well-behaved concept of paths that witness the distance. Readers unfamiliar with this notion may think of edge-weighted graphs instead, where every edge is modeled by a segment of respective length connecting the two endpoints.} and $H$ be an (unweighted, undirected) graph. For a distance parameter $\delta>0$, we say that $G$ contains $H$ as a \EMPH{$\delta$-fat minor} if there is a mapping $\phi$ from vertices and edges of $H$ to arc-connected subsets of $G$ so that:
\begin{itemize}
    \item whenever vertex $u$ is an endpoint of an edge $e$, the sets $\phi(u)$ and $\phi(e)$ intersect; and
    \item except for the above, for any two objects $o_1,o_2\in V(H)\cup E(H)$ we have $\dist_G(\phi(o_1),\phi(o_2))\geq \delta$.
\end{itemize}
Such a mapping $\phi$ is called a \EMPH{$\delta$-fat model} of $H$ in $G$, and sets $\{\phi(o)\colon o\in V(H)\cup E(H)\}$ are called the \EMPH{branch sets}. Thus, the disjointness requirement from the classic definition of a graph minor is replaced by the requirement of \EMPH{farness}: the branch sets of any two non-incident features of $H$ are not only required to be disjoint, but actually far from each other.

Georgakopoulos and Papasoglu~\cite{DBLP:journals/combinatorica/GeorgakopoulosP25} conjectured that if a length space $G$ excludes $H$ as a $\delta$-fat minor, for some fixed graph $H$ and $\delta>0$, then $G$ is quasi-isometric to an (unweighted) graph $G$ that is $H$-minor-free. Here, a \EMPH{quasi-isometry} between two metric spaces is a mapping that preserves the distances up to a constant multiplicative and additive distortion (depending on $H$ and $\delta$). If this so-called Fat Minor Conjecture was true, then the huge body of results and techniques developed over the decades for minor-free graphs could be lifted through the quasi-isometry to the setting of fat-minor-free length spaces.

Unfortunately, the Fat Minor Conjecture of Georgakopoulos and Papasoglu turned out to be false~\cite{DBLP:journals/corr/abs-2405-09383}, even in a strong sense~\cite{DBLP:journals/corr/abs-2508-15342}: one cannot replace the conclusion of $H$-minor-freeness with $H'$-minor-freeness, for any other graph $H'$ depending only on $H$. See also~\cite{DBLP:journals/corr/abs-2601-05761} for more recent works on smaller and more specific counterexamples.

However, even if the Fat Minor Conjecture is false, there is still hope that many of its supposed corollaries --- metric analogs of structural results for minor-free graphs --- could be still true. One such result was reported recently by Bonnet, Le, Pilipczuk, and Pilipczuk~\cite{bonnet2026coarsebalancedseparatorsfatminorfree}, who gave a coarse analog of the Alon-Seymour-Thomas Balanced Separator Theorem~\cite{AlonST90} for fat-minor-free graphs. However, besides this, the structure of fat-minor-free graphs and length spaces is essentially completely unexplored, and it is not clear which structural and algorithmic properties of minor-free graphs can be lifted, and which will fail in the fat-minor-free setting.

From the algorithmic point of view, in this work we are primarily interested in lifting results 
providing $(1+\eps)$-approximation algorithms in minor-free graphs. 
Note that the assumption of excluding a graph as a $\delta$-fat-minor implies structure
only at the ``scale'' larger than $\delta$, while below $\delta$, the metric can
be arbitrarily complicated. Thus, one should expect a correction, such as an additional
$+\Oh(\delta)$ additive error term in the approximation guarantee.

\bigskip

\noindent\textbf{Flatness,} also known as \EMPH{uniform quasi-wideness}, is a fundamental structural property of graph classes studied in the theory of Sparsity of Ne\v{s}et\v{r}il and Ossona de Mendez. We say that a graph class $\Cc$ is \EMPH{flat} if for every distance parameter $r\in \N$ there exists a constant $s_r\in \N$, called the \EMPH{budget}, and a function $N_r\colon \N\to \N$, called the \EMPH{overhead}, such that the following holds:
\begin{quote}
    For all $r,m\in \N$, graph $G\in \Cc$, and a vertex subset $A\subseteq V(G)$ with $|A|\geq N_r(m)$, there exist $B\subseteq A$ with $|B|\geq m$ and $X\subseteq V(G)$ with $|X|\leq s_r$ such that every path of length at most $r$ connecting two distinct vertices of $B$ must pass through $X$.
\end{quote}
The intuition behind this notion is the following: In any huge enough set of vertices $A$ one can find a sizable subset $B$ consisting of vertices that are pairwise far from each other, possibly after removing a bounded number of ``hub'' vertices (set $X$).

One of foundational results of Sparsity, proved by Ne\v{s}et\v{r}il and Ossona de Mendez~\cite{NesetrilM10}, is that a class of graphs is flat if and only if it is nowhere dense. Here, \EMPH{nowhere denseness} is the central notion of the theory: a class of graphs $\Cc$ is nowhere dense if for every $r\in \N$, one cannot obtain arbitrarily large cliques by contracting mutually disjoint subgraphs of radius at most $r$ in graphs from~$\Cc$. Note that for every fixed graph $H$, the class of $H$-minor-free graphs is nowhere dense, hence also flat. The connection between flatness and nowhere denseness underlies many key developments in Sparsity~\cite{sparsity}, and prominently features in its recent extensions to model-theoretically-well-behaved classes of dense graphs~\cite{DBLP:conf/icalp/DreierMST23,DreierMT24,MahlmannSiebertz26E, eleftheriadis2025extension, ghasemi2026weakly, mahlmann2025forbidden}.

\bigskip

\noindent\textbf{Fatness versus flatness.} 
A crucial realization made by Bourneuf and Ma.\ Pilipczuk in~\cite[Corollary~1.2]{BP25} is that there
is a useful metric analog of the notion of flatness. 
We formalize this (unnamed in~\cite{BP25}) concept under the name \EMPH{drill-flatness}, and prove that fat-minor-freeness implies drill-flatness. To state the result, we need some definitions.

Let $G$ be a length space and $\alpha\in \Rp$. A set $B\subseteq G$ shall be called \EMPH{$\alpha$-scattered}%
\footnote{We cannot avoid the name clash between an $\eps$-scatter and a set being $\alpha$-scattered,
both are established notions in prior literature. Luckily, in this work, $\eps$-scatters will appear in proofs only in \cref{sec:ksupplier}.}
if $\dist_G(a,b)>\alpha$ for all distinct $a,b\in B$. More generally, for $X\subseteq G$ we say that $B$ is \EMPH{$(\alpha,X)$-scattered} if $\dist_{G-X}(a,b)>\alpha$ for all distinct $a,b\in B\setminus X$. Here, $G-X$ is the length space derived from $G$ by removing all the points of $X$ and redefining the metric using paths that are disjoint from $X$.

Then, the definition of drill-flatness in length spaces reads as follows.
Note that, contrary to flatness, whose definition includes a universal quantifier over $r$,
here we focus only on a single distance scale $\alpha$. This is necessary to speak
about connections between drill-flatness and fatness, as in 
the latter the choice of $\delta$ fixes the scale.

\begin{definition}[Drill-flatness]\normalshape
    Let $\alpha,\beta\in \Rp$. We say that a length space $G$ is \EMPH{$(\alpha,\beta)$-drill-flat} with \EMPH{overhead} $N\colon \N\to \N$ and \EMPH{budget} $s\in \N$ if the following condition holds: 
    \begin{quote}
    For every $m\in \N$ and set $A\subseteq G$ with $|A|\geq N(m)$, there exists $B\subseteq A$ with $|B|=m$ and $X\subseteq G$ with $|X|\leq s$ such that $B$ is $(\alpha,\Ball(X,\beta))$-scattered.
    \end{quote}
    We call $\alpha$ the \EMPH{scatter radius} and $\beta$ the \EMPH{drill radius}.
\end{definition}

Intuitively speaking, in the definition of drill-flatness we replace deletion of a few ``hub'' vertices (set $X$) with deletion of a few balls of small radius $\beta$ (set $\Ball(X,\beta)$).

Next, our result will only assume exclusion of shallow fat minors: For $\alpha\in \R_{\geq 0}$, we say that a $\delta$-fat minor model $\phi$ is \EMPH{$\alpha$-shallow} if every branch set induces a subspace of radius at most $\alpha$.
Note that since shallow fat minors are more difficult to exhibit than fat minors, the shallow-fat-minor-free setting in which we will work generalizes the fat-minor-free one.

With all these definitions, our main structural result in the setting of metric spaces is the following:

\begin{restatable}[Drill-flatness in fat-minor-free spaces]{theorem}{metricuqwnew}\label{thm:metricuqwnew}
  For every $h\in \N$ and constants $\alpha,\delta\in \Rp$ and
  $\beta \geq 25 \cdot \sqrt{\alpha \delta}$,
  there exists $N \colon \N \to \N$ and $s \in \N$ such that
  every length space $G$ that excludes $K_h$ as a $2\alpha$-shallow $\delta$-fat minor
  is $(\alpha,\beta)$-drill-flat with overhead $N$ and budget $s$.

  Moreover, we have the following bounds, where the constants hidden in the $\Oh(\cdot)$ notation are universal, not depending on $h,\alpha,\beta,\delta$:
    \[N(m)\leq m^{2^{\Oh(h\cdot \alpha/\beta)}},\qquad \textrm{and} \qquad s\leq \Oh(h\cdot \alpha/\beta).\]
  In particular, $N$ and $s$ depend only on $h$ and the ratio $\alpha/\beta$.
\end{restatable}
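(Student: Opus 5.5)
We follow the classical Ramsey-plus-iteration template for uniform quasi-wideness (in the spirit of Nešetřil--Ossona de Mendez and Kreutzer et al.), adapted to length spaces. The invariant is a pair $(A_i,X_i)$ where $A_i\subseteq A$, $|X_i|\leq i$, and $A_i$ is ``almost'' $(\alpha,\Ball(X_i,\beta))$-scattered. The process runs for $t=\Oh(h\cdot\alpha/\beta)$ rounds; each round adds at most one drill centre, which gives the budget bound $s$. In each round we either (a) find a large subset of $A_i$ that is already $(\alpha,\Ball(X_i,\beta))$-scattered and stop, or (b) find a single point $x$ so that drilling $\Ball(x,\beta)$ makes progress on a large subset. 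Progress is measured by a potential that increases by a fixed amount of order $\beta$ per round. Each round shrinks the current set polynomially via Ramsey's theorem, so after $t$ rounds the overhead is $N(m)\leq m^{2^{\Oh(t)}}$, as claimed.

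\textbf{One round.} Let $G_i=G-\Ball(X_i,\beta)$. Colour pairs of $A_i$ by whether their $G_i$-distance is $\leq\alpha$. By Ramsey's theorem, with polynomial loss, we get either a large independent set, which is scattered, or a large set $C$ that is pairwise $\alpha$-close in $G_i$. In the latter case, fix shortest paths $P_{ab}$ of length $\leq\alpha$ in $G_i$ between the points of $C$.

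Now try to build a $2\alpha$-shallow $\delta$-fat model of $K_h$. Take $h$ points $a_1,\dots,a_h\in C$, and let the branch set of $a_j$ be a small ball around $a_j$. For each edge $a_ja_k$, let its branch set be a middle subsegment of $P_{a_ja_k}$, trimmed so that it stays $\delta$-far from the vertex balls. The shallowness $2\alpha$ holds because the paths have length $\leq\alpha$.

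If the model is $\delta$-fat, this contradicts the assumption that $G$ excludes $K_h$. So there is a failure of fatness: two non-incident pieces, e.g.\ two paths $P_{ab},P_{cd}$ on disjoint pairs, pass within distance $\delta$ of each other. A second Ramsey argument, over colourings of $4$-tuples or pairs of pairs, upgrades this from ``some pair'' to ``uniformly'': on a large subset $C'\subseteq C$, every two disjoint-pair paths come $\delta$-close, at a position along the paths that is canonically determined (after discretizing positions into $\Oh(\alpha/\beta)$ buckets of length about $\beta$).

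\textbf{Creating a hub.} From this uniform closeness we want a single point $x$ that is within $\beta$ of the near-intersection region of many paths. The key computation, which I expect to be the main obstacle, is showing that $\delta$-close crossings at a fixed bucket cluster inside one ball of radius $\beta$. The idea is a sunflower or triangle-inequality argument. If the closeness points of three pairwise-close paths were spread out, we could reroute along them and obtain shortcuts. This should either shorten some path by a fixed amount, contradicting minimality, or produce a fatter $K_h$ model at a rescaled level.

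The balance between $\delta$-sized jumps and path length $\alpha$ is where $\beta\geq 25\sqrt{\alpha\delta}$ enters. Over about $\alpha/\beta$ buckets, each step costs $\delta$, and each $\beta$-ball absorbs about $\beta^2/\alpha$ of slack. This suggests the argument passes through $\sqrt{\alpha\delta}$, and I would first aim to make that heuristic precise by choosing buckets of length $\sqrt{\alpha\delta}$.

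Once $x$ is found, set $X_{i+1}=X_i\cup\{x\}$. Every path between points of $C'$ now meets $\Ball(X_{i+1},\beta)$. Hence either the $G_{i+1}$-distances in $C'$ jump above $\alpha$, or the new shortest paths must avoid one more bucket-region. We would use the potential ``number of bucket positions forced to be blocked,'' which is at most $\Oh(h\alpha/\beta)$. This bounds the number of rounds and yields both stated bounds.
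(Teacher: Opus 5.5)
\textbf{There is a genuine gap: the hub step is unproven and your sketch gives no mechanism for it.} The step you flag as the main obstacle is the heart of the theorem. You jump straight to scatter radius $\alpha$, working with a set whose points are only known to be pairwise within $\alpha$, so the paths $P_{ab}$ have length up to $\alpha\gg\beta$ and carry no structure. With small vertex balls, almost all of each path becomes an edge piece, and several things go wrong:
\begin{enumerate}
\item Conflicts can occur anywhere along the paths, including a path $P_{a_ja_k}$ passing near a third centre $a_l$, which you do not address.
\item Your paths are already shortest, so rerouting through $\delta$-jumps only yields near-geodesic alternatives. It never contradicts minimality.
\item Even granting a hub $x$, you have no well-defined progress measure. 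After drilling $\Ball(x,\beta)$, the shortest paths in $G_{i+1}$ may be entirely new and come close at new places, so ``bucket positions forced to be blocked'' need not increase.
\end{enumerate}

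\textbf{The missing idea is to raise the scatter radius in small increments.} The paper runs $k=\lceil 300(\alpha/\beta-1)\rceil$ rounds. In each round it rescales the current radius $\ell$ to $1$ and only aims for $(1+\varepsilon)$-scatteredness, with $\varepsilon=\beta/(300\ell)$.
\begin{enumerate}
\item Because $A$ is already $(1,Z)$-scattered, a path of length at most $1+2\varepsilon$ between two of its points splits into two end segments of length about $\tfrac12$ and a middle piece of length $\Oh(\varepsilon)$. The end segments form the vertex branch sets and are automatically $\delta$-far from everything non-incident. So the only possible conflicts are between midpoints (\cref{lem:points-to-fat-minor}).
\item In \cref{lem:unit-step}, the graph of $(1+\varepsilon)$-close pairs is either sparse or dense. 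If sparse, it greedily yields the improved scattered set of size $|A|^{1/10}$.
\item If dense, the midpoint conflict graph either has a high-degree vertex, or an independent set of more than $\binom{h+1}{2}|A|^{3/2}$ midpoints. A high-degree vertex means more than $|A|^{1/10}$ points lie in one ball of radius $\tfrac12+5\varepsilon+3\delta$; this is the paper's notion of a hub. A large independent set lets \cref{thm:alon-density} produce a fat $K_h$.
\item A hub is drilled with a ball of radius $\Oh(\varepsilon)$, $A$ is restricted to the hub's ball, and the step is repeated. \cref{lem:small-step} bounds the number of hubs per round by $h-1$, since $h$ hubs would give a fat $K_{h,h}$ via \cref{lem:biclique-fat-minor}.
\end{enumerate}

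\textbf{Two further points fail as written.}
\begin{enumerate}
\item Ramsey's theorem does not give polynomial loss. For pairs it guarantees homogeneous sets of only logarithmic size, and for colourings of $4$-tuples far less. Iterating it over $\Theta(h\alpha/\beta)$ rounds therefore gives tower-type overhead, not $m^{2^{\Oh(h\alpha/\beta)}}$. The paper keeps the loss polynomial by replacing Ramsey with the sparse/dense dichotomy together with \cref{thm:alon-density}.
\item Your paths live in $G_i=G-\Ball(X_i,\beta)$, but fatness must hold in $G$. Two pieces that are far apart in $G_i$ may be $\delta$-close in $G$ through a drilled hole. The paper handles this by keeping all model paths $\delta/2$-away from the drilled set and applying \cref{lem:dist-GZ-to-G}. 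That forces every round to enlarge the old holes by $\delta$, which over $k\approx 300\alpha/\beta$ rounds costs $k\delta\leq\beta/2$. This is exactly where $\beta\geq 25\sqrt{\alpha\delta}$ comes from.
\end{enumerate}
Your heuristic $(\alpha/\beta)\cdot\delta\leq\Oh(\beta)$ has the right arithmetic. However, you attach it to a bucketing mechanism that does not correspond to any step one can actually carry out.
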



On a high level, the proof of \cref{thm:metricuqwnew}, presented in \cref{sec:flatness-metric}, follows the general scheme of the proof of the equivalence of flatness and nowhere denseness. First, using a simple greedy argument we may assume that $A$ is $\beta$-scattered. Then, we perform $\Theta(\alpha/\beta)$ rounds of improvement, where in each round the scatteredness of $A$ increases by $c\beta$ for some small constant $c$, at the cost of drilling a few ``holes'' of radius $\beta$ in the space and replacing $A$ with a sizable subset of $A$.
For technical reasons we have to also slightly enlarge the holes that have been drilled previously, which is a feature that is not present in the original proof, and leads to a dependence of $\beta$ on $\alpha$.
For the proof of the improvement step, we mostly rely on the approach of Mi.\ Pilipczuk, Siebertz, and Toru\'nczyk~\cite{DBLP:conf/lics/PilipczukST18a}, but we actually simplify it: we show that instead of an elaborate reasoning using ideas from model theory such as the branching index, a simple extremal argument is sufficient. So at the end, from our reasoning one can in fact extract a simpler proof of the classic connection between flatness and nowhere denseness.

We leave as an open question, whether \cref{thm:metricuqwnew} can be strengthened to have $\beta$ only depend on $\delta$ and not on $\alpha$ (i.e., of the form $\beta = \Omega_h(\delta)$ or even $\beta = \Omega(\delta)$).

\bigskip

\noindent\textbf{Algorithmic consequences: $\eps$-scatter dimension and clustering in fat-minor-free spaces.}
Having established drill-flatness of length spaces excluding a fixed graph $H$ as a $\delta$-fat minor,
we turn back to $\eps$-scatter dimension. 
In~\cite{BP25}, the bound on $\eps$-scatter dimension is obtained from drill-flatness,
based on a realization that 
$\eps$-scatters are metric analogs of structures called \EMPH{semi-ladders}, introduced and studied in Sparsity by Fabia\'nski, Mi.~Pilipczuk, Siebertz, and Toru\'nczyk~\cite{FPST18}.
In particular, Fabia\'nski et al. proved a bound on the maximum orders of semi-ladders in nowhere dense classes that relies only on flatness.
This proof readily lifts to the metric setting with flatness replaced by drill-flatness as shown in~\cite[Lemma~4.1]{BP25}.
To bound the order of an $(\eps,r)$-scatter, the proof uses drill-flatness for scatter radius $\alpha \coloneqq 2r$.
Hence, we obtain the following.

\begin{restatable}{theorem}{scatter}
    \label{cor:scatter-bound}
	For every $h\in \N$ and $\eps\in (0,1)$, there exist constants $c,\ell\in \N$ such that the following holds:
    For every $\delta \in \Rp$ and every $r \geq c \delta$, 
    every length space $G$ that excludes $K_h$ as a $4r$-shallow $\delta$-fat-minor
    does not contain any $(\eps,r)$-scatter of order larger than $\ell$.
	
	Moreover, we have
	\[c\leq \Oh(1/\eps^2)\qquad\textrm{and}\qquad \ell\leq 2^{2^{\Oh(h/\eps)}}.\]
\end{restatable}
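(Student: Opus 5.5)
We combine \cref{thm:metricuqwnew} with the semi-ladder argument of Fabia\'nski et al., as lifted to the metric setting in~\cite[Lemma~4.1]{BP25}. That lemma, as we plan to use it, states the following. If a length space $G$ is $(2r,\beta)$-drill-flat with overhead $N$ and budget $s$, and $\beta\leq \eps r/c_0$ for a suitable absolute constant $c_0$, then every $(\eps,r)$-scatter in $G$ has order bounded by a function of $N$ and $s$.

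The intuition is as follows. Apply drill-flatness to $A=\{a_1,\ldots,a_\ell\}$, and also to the $b$'s, to obtain a large subfamily that is $2r$-scattered after drilling $s$ balls of radius $\beta$. For $i<j$ in the subfamily, the path from $a_i$ to $b_j$ of length at most $r$ must therefore hit a drilled ball. Since $s$ is bounded, a Ramsey/pigeonhole argument finds two pairs $i<j$ routed through the same ball $\Ball(x,\beta)$. Rerouting through $x$ then yields, via the triangle inequality, a path from some $a_{i'}$ to $b_{i'}$ of length at most $r+O(\beta)\leq(1+\eps)r$, which contradicts $\dist(a_{i'},b_{i'})>(1+\eps)r$. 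The slack $O(\beta)$ must be absorbed into $\eps r$, which forces $\beta=\Theta(\eps r)$.

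The plan is then to set $\alpha=2r$ and $\beta=\eps r/c_0$ and to check the hypothesis of \cref{thm:metricuqwnew}, namely $\beta\geq 25\sqrt{\alpha\delta}$. This reads $\eps r/c_0\geq 25\sqrt{2r\delta}$, which is equivalent to $r\geq 1250\,c_0^2\,\delta/\eps^2$. Taking $c=\Oh(1/\eps^2)$ therefore suffices. The shallowness requirement is $2\alpha=4r$, which matches the hypothesis of \cref{cor:scatter-bound} exactly. Since $\alpha/\beta=2c_0/\eps$, \cref{thm:metricuqwnew} gives budget $s=\Oh(h/\eps)$ and overhead $N(m)\leq m^{2^{\Oh(h/\eps)}}$, both independent of $r$ and $\delta$.

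It remains to extract $\ell$. The semi-ladder argument uses iterations of $N$ only a bounded number of times, essentially twice together with a Ramsey step whose size depends on $s$. Starting from a target size $m$ that is exponential in $s$, that is $m=2^{\Oh(h/\eps)}$, we obtain $\ell\leq N(N(m))$. Both $N(m)$ and $N(N(m))$ are of the form $2^{2^{\Oh(h/\eps)}}$, because composing maps of the form $m\mapsto m^{2^{O(h/\eps)}}$ only multiplies the inner exponent $2^{O(h/\eps)}$ by another such factor, which keeps the bound doubly exponential.

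The main obstacle is bookkeeping inside~\cite[Lemma~4.1]{BP25}. We need to confirm that its rerouting loss is indeed $O(\beta)$ with an absolute constant, so that the requirement is $\beta\lesssim\eps r$. We also need to confirm that its dependence on $(N,s)$ composes to the stated doubly exponential bound rather than something worse. If the Ramsey step there is, say, $2^{\Oh(s\log s)}$ inputs, the final bound still collapses to $2^{2^{\Oh(h/\eps)}}$. I would verify this by rereading that proof with $\alpha=2r$ explicitly.
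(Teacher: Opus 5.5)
Your proposal is correct and is the paper's proof: take $\alpha=2r$ and $\beta=\eps r/3$, so that $c_0=3$ and $c=25^2\cdot 3^2\cdot 2/\eps^2$, apply \cref{thm:metricuqwnew}, and then apply the metric semi-ladder lemma (\cref{lem:flat-no-scatter}, the paper's version of \cite[Lemma~4.1]{BP25}), which gives $\ell=N(2(2+\lfloor 6/\eps\rfloor)^s+1)\leq 2^{2^{\Oh(h/\eps)}}$ with a single application of $N$ rather than two. Your heuristic sketch of that lemma is off, though harmless since you use the lemma as a black box: drill-flatness is applied only to $\{b_1,\ldots,b_\ell\}$, the forced hit comes from the $b_2$--$a_1$--$b_3$ path of length at most $2r$, and the rerouting contradiction needs all $b_i$ to share the same discretized distance profile to $X$ (two pairs routed through the same ball would not suffice).
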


For comparison, Bourneuf and Ma.~Pilipczuk \cite{BP25} gave a bound of $\ell \leq 2^{(h/\eps)^{\Oh(h)}}$ in  $K_h$-minor-free graphs. Improving the bound of~\cref{cor:scatter-bound} in terms
of dependency of $\ell$ on $\eps$ remains an interesting open problem. 
As the bound on $c$ is concerned, 
we remark that improving \Cref{thm:metricuqwnew} to requiring only
$\beta = \Omega(\delta)$ automatically improves the bound to $c \leq \Oh(1/\eps)$. 
We conjecture that this should be the optimal bound on $c$ in \cref{cor:scatter-bound},
as then $r \geq c \delta$ is equivalent to $\delta = \Oh(\eps r)$, that is,
$\delta$ is smaller than the difference between ``short'' and ``long'' distances
in the definition of an $(\eps,r)$-scatter.

By combining the main algorithmic result of~\cite{ABBCGKMSS23} with \cref{cor:scatter-bound} and a standard ``coarsening'' argument that disposes of distances smaller than $c\delta$, we obtain the following consequence.

\begin{restatable}{theorem}{apxClust}
    \label{thm:apx}
    Fix $h\in \N$. Then given $\eps\in \Rp$ and an instance $(k,G,F,C,\|\cdot\|)$ of~ \textsc{Norm $k$-Clustering}, where $G$ excludes $K_h$ as a $\delta$-fat minor, one can in time $\Oh_{h,k,\eps}((|F|+|C|)^{\Oh(1)})$ compute a solution of cost at most $(1+\eps)\cdot \OPT+\Oh(\delta/\eps^2)\cdot \|\mathbf{1}\|$, where $\OPT$ is the minimum possible cost of a solution and $\mathbf{1}$ denotes the all-one vector in $\R^C$.
\end{restatable}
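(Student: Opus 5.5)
We derive \cref{thm:apx} from \cref{cor:scatter-bound} and the EPAS of~\cite{ABBCGKMSS23}, after a preliminary ``coarsening'' step that removes the metric structure below scale $c\delta$. Take $c=\Oh(1/\eps^2)$ from \cref{cor:scatter-bound}, possibly applied with $\eps/2$ in place of $\eps$ to leave slack, and set $\tau\coloneqq c\delta$.

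\textbf{Coarsened metric.} On the finite point set $P=F\cup C$ (viewed inside $G$) define
\[
\dist'(x,y)\coloneqq \dist_G(x,y)+\tau\quad\text{for } x\neq y.
\]
This is still a metric: adding a constant to all nonzero distances preserves the triangle inequality. It satisfies $\dist_G\le \dist'\le \dist_G+\tau$. Consequently, for every solution $S$ and every client $c$ we have $\dist_G(c,S)\le \dist'(c,S)\le \dist_G(c,S)+\tau$. By monotonicity and the triangle inequality of the norm, the two cost functions $\val$ and $\val'$ satisfy
\[
\val(S)\le\val'(S)\le \val(S)+\tau\|\mathbf 1\|.
\]

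\textbf{Bounded scatter dimension of $\dist'$.} Let $(a_i),(b_i)$ be an $(\eps,r)$-scatter in $(P,\dist')$ of order at least $2$. Since $a_1\neq b_2$, we get $r\ge \dist'(a_1,b_2)\ge\tau$. Translating back to $G$:
\begin{itemize}
\item $\dist_G(a_i,b_j)\le r-\tau$ for $i<j$;
\item $\dist_G(a_i,b_i)>(1+\eps)r-\tau$.
\end{itemize}
Set $r'\coloneqq r-\tau$. Then $(1+\eps)r-\tau=(1+\eps)r'+\eps\tau\ge(1+\eps)r'$, so we have an $(\eps,r')$-scatter in $G$.

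The remaining case is $r'<c\delta$, where \cref{cor:scatter-bound} does not apply. This is the main obstacle, and I would handle it by adjusting the shift. Choose the additive constant $\tau$ so that every scatter with $r<2\tau$ has order at most $2$. If $r<2\tau$, then for $i<j$ we have $a_i\neq b_j$ forced by the distance conditions, and each such pair has $\dist'\ge\tau$. Refining further, use the additive term $\tau=\Theta(c\delta/\eps)$ instead, so that $r\ge\tau$ forces $\eps\tau$ slack. One can then convert to an $(\eps/2,r'')$-scatter in $G$ with $r''\ge c\delta$ by enlarging $r''$ within the slack $\eps\tau/2$. The constants must be tuned so that the final additive error is $\Oh(\delta/\eps^2)\cdot\|\mathbf 1\|$; this calibration is the delicate point. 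Once $r''\ge c\delta$, \cref{cor:scatter-bound} applies, since $G$ excludes $K_h$ as a $\delta$-fat minor and hence also as a shallow one. It bounds the order by $\ell(h,\eps)$, so $(P,\dist')$ has $\eps$-scatter dimension bounded by a function of $h$ and $\eps$.

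\textbf{Applying the EPAS.} Run the algorithm of~\cite{ABBCGKMSS23} on $(k,(P,\dist'),F,C,\|\cdot\|)$. It returns $S$ with $\val'(S)\le(1+\eps)\OPT'$ in time $\Oh_{h,k,\eps}(|P|^{\Oh(1)})$; the distances $\dist_G$ on $P$ are computable by shortest paths. Then
\[
\val(S)\le\val'(S)\le(1+\eps)(\OPT+\tau\|\mathbf 1\|),
\]
which is $(1+\eps)\OPT+\Oh(\delta/\eps^2)\|\mathbf 1\|$ once $\tau=\Oh(\delta/\eps^2)$. Rescaling $\eps$ by a constant factor absorbs the losses in the scatter conversion.
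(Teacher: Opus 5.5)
Your additive shift is a legitimate coarsening as far as it goes. $\dist'$ is a metric, $\val(S)\le\val'(S)\le\val(S)+\tau\|\mathbf{1}\|$, and a $\dist'$-scatter of order at least $3$ at scale $r$ forces $r\ge\tau$. It then gives in $G$ short distances at most $r'=r-\tau$ and long distances exceeding $(1+\eps)r'+\eps\tau$. Two smaller points: order $2$ does not force $a_1\neq b_2$, so you need order $3$, where $a_1\neq a_2$ implies one of them differs from $b_3$; and no $\tau$ makes all scatters with $r<2\tau$ have order at most $2$, only $r<\tau$ does.

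The genuine gap is the case $r'<c\delta$, which you flag as ``the delicate point'' but never close. Your conversion picks a scale $r''\ge c\delta$, with $c=c(\eps/2)=\Theta(1/\eps^2)$, and needs the long distances to exceed $(1+\eps/2)r''$. When $r'$ is close to $0$, the only guarantee is that they exceed $\eps\tau$. So you need $\eps\tau\ge(1+\eps/2)c\delta$, i.e.\ $\tau=\Theta(c\delta/\eps)=\Theta(\delta/\eps^3)$, which is exactly the value you choose. With it, the argument proves only $(1+\eps)\OPT+\Oh(\delta/\eps^3)\cdot\|\mathbf{1}\|$. Your closing line ``once $\tau=\Oh(\delta/\eps^2)$'' contradicts your own choice of $\tau$, and rescaling $\eps$ by a constant cannot absorb a polynomial factor $1/\eps$.

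The missing idea is to coarsen without distorting distances at the scales where \cref{cor:scatter-bound} is invoked. The paper snaps $F\cup C$ to a maximal $c\delta$-scattered subset $W'$ and runs the algorithm of~\cite{ABBCGKMSS23} on the submetric on $W'$. An $(\eps,r)$-scatter there with $r>c\delta$ is literally a scatter in $G$. For $r\le c\delta$, distinct points of $W'$ are too far apart to allow order more than $2$. Snapping costs at most $2c\delta$ per client, so the additive term is $(4+2\eps)c\delta\|\mathbf{1}\|=\Oh(\delta/\eps^2)\|\mathbf{1}\|$.

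Your shift can also be rescued, with $\tau\coloneqq 6c_0\delta/\eps^2$, but only through an extra step. Write $c(\eta)=c_0/\eta^2$ for the constant of \cref{cor:scatter-bound}. When $c(1/2)\delta\le r'<c(\eps)\delta$, apply the corollary at scale $r'$ itself with the adaptive accuracy $\eta=\sqrt{c_0\delta/r'}\in(\eps,1/2]$. This is valid because $(\eta-\eps)r'\le c_0\delta/(4\eps)\le\eps\tau$, so the long distances exceed $(1+\eta)r'$. When $r'<c(1/2)\delta$, use $\eta=1/2$ at scale $c(1/2)\delta$, which needs only $\eps\tau\ge 6c_0\delta$. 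With a fixed accuracy $\Theta(\eps)$, as in your proposal, this kind of conversion needs $\tau=\Omega(\delta/\eps^3)$.
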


Note that the approximation guarantee contains an additive term $\Oh(\delta/\eps^2)\cdot \|\mathbf{1}\|$ that depends on the fatness parameter $\delta$ (but not on $h$, the constant hidden in the $\Oh(\cdot)$ notation is universal). 
This is the corresponds to the discussed fact that excluding $K_h$ as a $\delta$-fat minor
implies structure only at the ``scale'' larger than $\delta$.

\bigskip

\noindent\textbf{Drill-flatness in hereditary graph classes.} 
Finally, in the more specialized setting of \EMPH{hereditary}\footnote{A graph class is hereditary if it is closed under taking induced subgraphs} graph classes we uncover strong ties between drill-flatness and (shallow) induced minors.
Recall that an \EMPH{induced model} of a graph $H$ in a graph $G$ is a mapping $\phi$ that maps the vertices of $H$ to pairwise disjoint, connected subsets of the vertices of $G$ (called \EMPH{branch sets}) so that for any two vertices $u,v\in V(H)$, $u$ and $v$ are adjacent in $H$ if and only if their branch sets $\phi(u)$ and $\phi(v)$ are adjacent in $G$. Compared to the standard definition of a (minor) model, this notion additionally requires that branch sets of non-adjacent vertices of $H$ are non-adjacent in $G$; or equivalently, they are at distance at least $2$. Thus, induced minors are conceptually close to $2$-fat minors, and in particular excluding $H$ as an induced minor implies excluding $H$ as a $2$-fat minor. It appears, however, that many statements in coarse graph theory that fail in the fat-minor-free regime, do hold, or at least are not known to fail, in the induced-minor-free regime; see e.g.~\cite{ChudnovskyCK26,ChudnovskyCAL26,ChudnovskyH25,DBLP:journals/corr/abs-2309-08169,HendreyNST24,DBLP:conf/soda/KorhonenL24,DBLP:journals/jctb/Korhonen23} and references~therein.%

We adjust the definition of drill-flatness from length spaces to unweighted graphs by demanding that $A,B,X \subseteq V(G)$. Similarly, an induced minor model is \EMPH{$r$-shallow} if every branch set induces a subgraph of radius at most $r$.
With these definitions at hand, our main result in the induced-minor-free setting reads as follows.

\begin{restatable}
    [Drill-flatness in induced-minor-free graphs]{theorem}{inducedflatness}\label{thm:indminoruqw}
    For all positive integers $h$ and $d$, there exist a function $N\colon \N\to \N$ such that every graph $G$ that does not contain the $1$-subdivision of $K_h$ as a $3d$-shallow induced minor is $(2d,37)$-drill-flat with overhead $N$ and budget~$hd$. Moreover,
    \[N(m)\leq m^{2^{\Oh(hd)}}.\]
\end{restatable}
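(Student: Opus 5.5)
The plan is to follow the scheme of \cref{thm:metricuqwnew}, now in the discrete setting. The drill radius is a constant ($37$) rather than growing with the scatter radius, and the budget is $hd$, so we aim for $d$ rounds. Each round raises the scatteredness of the current set by $2$ (from $2i$ to $2i+2$) and drills at most $h$ new balls of radius $37$.

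We begin with a greedy preprocessing step: if a graph excludes the $1$-subdivision of $K_h$ as a shallow induced minor, then a large enough set contains a large $2$-scattered subset, or more generally, the required scatteredness after $X=\emptyset$. For this we use that a huge set of pairwise close vertices gives, by Ramsey arguments, either a large clique or a large star-like structure. A large clique already contains the subdivided $K_h$ as an induced minor of depth $1$, since branch sets can be single vertices together with edge vertices.

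The induction step is the core. Suppose $B_i$ is $(2i,\Ball(X_i,37))$-scattered in $G$, with $|X_i|\le ih$. Let $G_i=G-\Ball(X_i,37)$. For each pair $a,b\in B_i$ at distance exactly $2i+1$ or $2i+2$ in $G_i$, fix a shortest path and consider its midpoint region. We apply Ramsey to the pairs of a large subset. If many pairs are far, we keep them. Otherwise there is a large subset $C$ in which every pair is connected by a short path of length $\le 2i+2$.

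In that case we run the extremal argument replacing the branching index. We take the balls $\Ball_{G_i}(a,i)$ for $a\in C$; these are pairwise disjoint and nonadjacent, so they are candidate branch sets of radius $\le i\le d$. The connections are the middle vertices of the paths, at distance $i+1$ from both ends. Either some vertex $x$ (the hub) lies near the middles of paths for many pairs, in which case we drill $\Ball(x,37)$, which destroys all those paths, and we add $x$ to $X$. Or the middles are spread out, and we select $h$ roots and $\binom h2$ middle vertices that are pairwise far. These middles act as the subdivision vertices, each adjacent to its two branch sets via the path halves, which we absorb into the branch sets. This yields the $1$-subdivision of $K_h$ as an induced minor of depth $\le 3d$.

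The dichotomy is made quantitative by a sunflower/greedy argument: we pick pairs one by one, and if a new middle lies within distance $<37$ (an absolute constant accounting for the path-half overlaps) of earlier structure, we charge it to a hub. After at most $h$ hubs, some large subset has no conflicts left. Each round costs a polynomial in the set size with exponent $2^{O(h)}$, so $d$ rounds give $N(m)\le m^{2^{O(hd)}}$.

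The main obstacle is the inducedness. Branch sets and path halves for different pairs must be nonadjacent, not merely disjoint, and this is exactly why the drill radius must be a large constant like $37$ and why previously drilled balls must not be re-entered by the enlarged branch sets. I would first attempt the separation accounting with a uniform constant drill radius, checking that the $(2i,\cdot)$-scatteredness together with a distance of at least $37$ from hubs gives all the non-adjacencies needed.
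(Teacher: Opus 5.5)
\textbf{Gap in the hub step.} Your skeleton is right: $d$ rounds, each raising scatteredness by $2$ inside the induced subgraph $G_i=G-\Ball(X_i,37)$, which is legitimate because excluding an induced minor is hereditary. But the step that makes the round work is missing.

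Drilling $\Ball(x,37)$ around a hub does not ``destroy'' the close pairs. They can still be at distance at most $2i+2$ in the drilled graph along other routes. Nothing in your sketch bounds the number of hubs by $h$: the sentence ``after at most $h$ hubs, some large subset has no conflicts left'' is exactly what needs proof, and your charging scheme gives no reason for it.

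The paper's mechanism (\cref{lem:indminor:small-step}) is to \emph{restrict} the current set to the hub's neighbourhood:
\begin{itemize}
\item When a vertex $v$ of the current drilled graph has at least $|A|^{1/10}$ points of $A$ within distance $i+6$, keep only those points (a polynomial loss) and drill $\Ball(v,37)$.
\item Set aside the points that fall into drilled balls, since they are trivially scattered, and rerun the unit step on the rest.
\item Successive hubs are then automatically $37$-scattered. If $h$ of them arise, the surviving points lie within $i+6$ of every hub and outside all drilled balls.
\item Take $\Ball(x,i-13)$ for the points, $\Ball(v_j,6)$ for the hubs, and the middle segments of shortest $x$--$v_j$ paths (length at most $11$, contained in $\Ball(v_j,18)$). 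This is an induced model of the $1$-subdivision of $K_{h,h}$. Merging along a matching gives the forbidden $1$-subdivision of $K_h$.
\end{itemize}
This biclique contradiction is what caps the hubs per round at $h$. It is also where $37=2\cdot 18+1$ comes from: the sets $\Ball(v_j,18)$ must be pairwise non-adjacent.

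\textbf{Other steps that fail as written.}
\begin{itemize}
\item \emph{Ramsey.} Ramsey's theorem on pairs yields only homogeneous sets of logarithmic size, so $d$ rounds give tower-type overhead, not $m^{2^{\Oh(hd)}}$. The paper instead uses a counting dichotomy (\cref{lem:indminoruqw:unit-step}). Either the graph joining pairs at distance at most $2i+2$ has at most $\frac{1}{16}|A|^{19/10}$ edges, and a greedy independent set of size $|A|^{1/10}$ is $(2i+2)$-scattered. Or it is dense. In the dense case, the conflict graph on middle segments has maximum degree below $\frac12|A|^{1/5}$ unless a hub exists. An independent set of middles then leaves more than $\binom{h+1}{2}|A|^{3/2}$ non-interfering edges, and \cref{thm:alon-density} gives $K_h$ as a $1$-shallow minor. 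So you never need $h$ roots whose $\binom{h}{2}$ middles are pairwise far.
\item \emph{Preprocessing.} Every induced minor of a clique is a clique, so cliques exclude the $1$-subdivision of $K_h$ for $h\ge 2$. Yet a clique has no $1$-scattered pair without drilling, so nothing with $X=\emptyset$ can hold. The correct start spends one drill. If some $\Ball(v,37)$ contains $m$ points of $A$, output them with $X=\{v\}$. Otherwise a maximal $37$-scattered subset has size at least $|A|/m$ and is $36$-scattered. The iteration then starts at $i=18$, and the total budget is $h(d-18)\le hd$.
\item \emph{Ball radii.} $\Ball(a,i)$ and $\Ball(b,i)$ are adjacent when $\dist(a,b)=2i+1$. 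So the point branch sets need radius $i-2$, with the middle four or five vertices of each shortest path as the subdivision branch sets.
\end{itemize}
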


Note here that $1$-subdivisions of cliques --- graphs obtained from cliques by replacing every edge by a path of length $2$ --- are universal for induced minors in the following sense: Every graph is an induced minor of a large enough $1$-subdivision of a clique. 


The proof of \cref{thm:indminoruqw} relies on the same approach as that of \cref{thm:metricuqwnew}, just adjusted to the setting of induced minors. The main difference is that in the induction step we can simply restrict attention to an induced subgraph, which allows us to avoid increasing the radii of the holes drilled so far. This results in an important improvement: in contrast to \Cref{thm:metricuqwnew}, the drill radius becomes a universal constant ($37$), independent of the target scatter radius $d$. 
We leave as an open question, whether the drill radius in \Cref{thm:indminoruqw} can be improved to $1$.

In fact, we can extend \cref{thm:indminoruqw} to an elegant characterization of hereditary classes that enjoy drill-flatness. Precisely, we prove the following statement.
(The equivalence \ref{c:ismf}$\Leftrightarrow$\ref{c:fsmf} easily lifts from the non-shallow setting~\cite[Lemma 17]{hickingbotham2025induced}, but is included for completeness.)

\begin{restatable}{theorem}{indeq}\label{thm:equivalences}
    For every hereditary graph class $\CC$, the following are equivalent.
    \begin{enumerate}[label=(\arabic*)]
        \item\label{c:ismf} $\CC$ is induced-shallow-minor-free.
        \item\label{c:fsmf} $\CC$ is fat-shallow-minor-free.
        \item\label{c:df}   $\CC$ is drill-flat.
        \item\label{c:patt} For every $r \geq 3$ there is $k \in \N$ such that $\CC$ excludes $\clique{r}{k}$ and $\web{r}{k}$ as induced subgraphs.
    \end{enumerate}
\end{restatable}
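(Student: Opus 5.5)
The plan is to prove the cycle of implications \ref{c:ismf}$\Rightarrow$\ref{c:df}$\Rightarrow$\ref{c:patt}$\Rightarrow$\ref{c:ismf}, and to handle \ref{c:ismf}$\Leftrightarrow$\ref{c:fsmf} separately by lifting the non-shallow argument of~\cite[Lemma 17]{hickingbotham2025induced}. First I fix the definitions in the natural way. A class is \emph{induced-shallow-minor-free} if for every $r$ there is $h$ such that no graph in $\CC$ contains the $1$-subdivision of $K_h$ as an $r$-shallow induced minor. Fat-shallow-minor-freeness is defined analogously with $r$-shallow $\delta$-fat minors, for a suitable fixed $\delta\ge 2$. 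A class is \emph{drill-flat} if for every $d$ there are a drill radius $\beta$ (a constant), a budget and an overhead witnessing $(d,\beta)$-drill-flatness for all members.

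For \ref{c:ismf}$\Leftrightarrow$\ref{c:fsmf}, one direction is immediate, since an induced model is $2$-fat. For the converse, I take a $\delta$-fat $r$-shallow model of a huge clique. Hereditarity lets me restrict to the union of the branch sets; I take geodesic shortest paths, prune them to induced paths, and contract edge-paths into subdivision vertices. This yields an induced model of a $1$-subdivided clique with radius $O(r\delta)$. Fatness guarantees non-adjacency of branch sets of non-incident features.

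For \ref{c:ismf}$\Rightarrow$\ref{c:df}, I apply \cref{thm:indminoruqw} directly with $d$ and the $h$ given by excluding the $3d$-shallow induced minor. This gives $(2d,37)$-drill-flatness, which covers every scatter radius, since a larger $d$ only helps.

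For \ref{c:df}$\Rightarrow$\ref{c:patt}, I argue by contradiction. Suppose $\CC$ contains $\clique{r}{k}$ (the clique with edges subdivided $r$ times, presumably) or $\web{r}{k}$ for all $k$. I take $A$ to be the principal vertices and use scatter radius $\alpha$ larger than the length of a subdivided path. After drilling $s$ balls of radius $\beta$, I must show that any set $B$ of $m$ principal vertices still has two vertices connected outside the holes. In $\clique{r}{k}$ with $r$ larger than about $2\beta$, a ball of radius $\beta$ around a subdivision vertex hits only one path. A ball around a principal vertex $v$ kills only paths at $v$, and $v$ can be excluded from $B$. So $s$ balls destroy at most $s$ paths plus a few principal vertices, while $B$ has $\binom m2$ pairs: a contradiction. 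This is where $r\ge 3$ and the interplay between $r$ and $\beta$ matter. Since drill-flatness may fix $\beta$ large, I would note that $\clique{r}{k}$ contains $\clique{r'}{k'}$ for longer subdivisions only up to hereditarity, so I may need to use a larger $r$. The spiderweb $\web{r}{k}$ is the dense analog, where neighbors of principal vertices are cliqued; there, a ball at a principal vertex still disconnects only its own spokes.

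For \ref{c:patt}$\Rightarrow$\ref{c:ismf}, I take a shallow induced model of a huge $1$-subdivided clique and extract a canonical pattern. First I pick a root in each branch set and shortest paths to the connection points. Ramsey arguments on the bounded-length structures then classify the interactions: the paths either stay induced, giving $\clique{r}{k}$ after pruning, or the first vertices pairwise see each other, giving $\web{r}{k}$. I expect this Ramsey/canonization step to be the main obstacle. The bounded radius bounds the path lengths, which makes finitely many types available, but all those types must be reduced to exactly the two patterns. I would first try to normalize via a BFS tree in each branch set and then apply product Ramsey on tuples of path vertices.
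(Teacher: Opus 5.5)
Your (1)$\Rightarrow$(3) step is the paper's argument (\cref{thm:indminoruqw}). Your construction of an induced model of a $1$-subdivided clique from a $\delta$-fat model of a clique correctly gives (1)$\Rightarrow$(2), and heredity is not needed there. Your direct (3)$\Rightarrow$(4) is a legitimate merger of the paper's (3)$\Rightarrow$(2) (\cref{lem:drill-flat-implies-fsmf}, which uses the same blocking count) with its (2)$\Rightarrow$(4). To make it rigorous, state the boosting precisely via \cref{obs:bipartize}: $\clique{r}{n+\binom{n}{2}}$ contains $\clique{2r+1}{n}$, and $\web{r}{n+\binom{n}{2}}$ contains $\web{2r}{n}$, as induced subgraphs. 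Also order the quantifiers as in \cref{def:class-drill-flat}, with $\beta$ fixed before the scatter radius. If $\beta$ may depend on $\alpha$, every graph is $(\alpha,\beta)$-drill-flat for $\beta\ge\alpha$ by the greedy argument, and then (3)$\Rightarrow$(4) fails. Two links of your scheme, however, are genuinely missing.

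First, nothing valid connects (2) back to the cycle. The sentence ``an induced model is $2$-fat'' is false. In a $\delta$-fat model even the branch sets of two \emph{adjacent} vertices of $H$ must be at distance at least $\delta$, and edges need their own branch sets. It is $2$-fat models that yield induced ones, which is the direction you then prove. More importantly, FSMF only excludes minors of some fixed, possibly large, fatness $\delta$. So deriving ISMF from FSMF means producing arbitrarily fat shallow minors from a failure of ISMF, and this needs heredity in an essential way. The non-hereditary class of graphs with a universal vertex has diameter at most $2$, hence excludes every graph on at least two vertices as a $3$-fat minor, yet it contains every graph as an induced subgraph. The paper closes the cycle via (2)$\Rightarrow$(4). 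By \cref{obs:encode-fat-minors}, $\clique{r}{n}$ and $\web{r}{n}$ contain every $n$-vertex graph as a $(\lfloor r/4\rfloor+10)$-shallow $(\lfloor r/2\rfloor-10)$-fat minor. The heredity-based boosting of $r$, which you already need for (3)$\Rightarrow$(4), then beats any fixed $\delta$.

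Second, (4)$\Rightarrow$(1), the hardest implication, is left as an acknowledged obstacle. The route you sketch, Ramsey inside a shallow induced model of a $1$-subdivided clique, runs into exactly the problem the paper points out. Two ideas are missing.

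(i) To reduce all Ramsey types to the two clean ones, \cref{lem:fan-ramsey} re-roots. Make the set $C$ of cross-edge positions constant and nonempty, and let $h$ be the largest position occurring in $C$. Pick $(a,h)\in C$ (or the symmetric case), take $q_0^a$ on one fixed path as the new centre, and keep only the suffixes $q_i^h\cdots q_i^\lambda$. Maximality of $h$ leaves at most the edges $q_i^hq_j^h$, so you get an independent or a linked fan.

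(ii) Each fan extraction at a principal branch set keeps only a large subset of its connections, and in a subdivided clique these subsets need not be compatible. You would need a large $I$ such that for every $i\in I$ the fan at $i$ reaches all of $I\setminus\{i\}$, including earlier indices that you can no longer force to be leaves. The paper avoids this by modelling $\biclique{3}{M}{M}$ instead. It applies the fan lemma successively to $A_i\cup X_i(J_{i-1})$, shrinking only the index set $J$ on the other side, and then does the same on the $B$ side. The $3$-subdivision keeps the leaves $x_{ij}\in S^1_{ij}$ and $y_{ij}\in S^3_{ij}$ apart, and minimality of $P_{ij}$ leaves the middle segments $R_{ij}$ without edges to $A_i$ or $B_j$. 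This yields $\biclique{r}{m}{m}$, $\biweb{r}{m}{m}$, or the mixed $\mixweb{r}{m}{m}$, with $3\le r\le 11d$. \cref{obs:bipartize} converts these into $\clique{2r+1}{n}$, $\web{2r}{n}$, or $\clique{2r}{n}$ respectively. Pigeonholing over the bounded range of $r$ then gives a single $r$ that violates (4).
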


\noindent
Here, a class $\Cc$ is
\begin{itemize}
    \item \EMPH{induced-shallow-minor-free} if for every $d\in \N$ there exists a graph $H_d$ such that no graph in $\Cc$ contains $H_d$ as a $d$-shallow induced minor;
    \item \EMPH{fat-shallow-minor-free} if there exists $\delta\in \N$ such that for every $d\in \N$ there exists a graph $H_d$ that is excluded in every graph from $\Cc$ as a $\delta$-fat minor;
    \item \EMPH{drill-flat} if there exists a drill radius $\beta\in \N$ such that for every $\alpha\in \N$, $\Cc$ is $(\alpha,\beta)$-drill-flat with some overhead and budget (depending on $\alpha$).
\end{itemize}
For the last condition fix $r \geq 3$ and $n\in\N$, and we define:
  \begin{itemize}
    \item $\clique{r}{n}$ to be the \EMPH{$r$-subdivided clique} of order $n$, which is obtained by subdividing each edge of the clique  $K_n$ exactly $r$ many times. We refer to the original vertices of the $K_n$ as \EMPH{principal} vertices.
    \item $\web{r}{n}$ to be the \EMPH{$r$-web} of order $n$, which is obtained from $\clique{r}{n}$ by turning the neighborhood of every principal vertex into a clique.
  \end{itemize}
These forbidden induced subgraphs previously appeared also in the study monadically stable graph classes~\cite{dreier2024first}. See \Cref{fig:intro-patterns} for an illustration.
\begin{figure}[h!]
  \centering\small
  $\vcenter{\hbox{\subclique[1]{3}{5}{}}}$
  \qquad\qquad
  $\vcenter{\hbox{\spiderweb[1]{3}{5}{}}}$
  \caption{$\clique{3}{5}$ and $\web{3}{5}$. Replicated from \cite{dreier2024first}.}\label{fig:intro-patterns}
\end{figure}

We note that the restriction to hereditary graph classes in \Cref{thm:equivalences} is crucial:
the (non-hereditary) class of graphs with a universal vertex is both fat-shallow-minor-free and drill-flat, but not induced-shallow-minor free and contains all graphs as induced subgraphs.

We prove \cref{thm:equivalences} by a cycle of implications, where the key implication \ref{c:ismf}\,$\Rightarrow$\,\ref{c:df} is provided by \cref{thm:indminoruqw}. Implication \ref{c:df}\,$\Rightarrow$\,\ref{c:fsmf} follows by observing that a fat minor of a large clique is an obstruction to flatness. Implication \ref{c:fsmf}\,$\Rightarrow$\,\ref{c:patt} is a basic observation that assuming $\Cc$ is hereditary, sufficiently large patterns contain fat minors of all $n$-vertex graphs. Finally, implication \ref{c:patt}\,$\Rightarrow$\,\ref{c:ismf} relies on finding large patterns within induced models of large subdivided bicliques using Ramsey's~Theorem.

\bigskip

\noindent\textbf{Examples of induced-shallow-minor-free (ISMF) classes.} 
ISMF classes seem to fill a natural gap in the theory of hereditary graph classes, as they unify two otherwise separate lines of research.
\begin{enumerate}
    \item By definition, every induced-minor-free class is ISMF.

    (This includes geometric graph classes such as string graphs.)
    \item By definition, every shallow-minor-free class is ISMF.
    
    (Shallow-minor-free classes are precisely the nowhere dense classes.
    They include all classes of bounded degree and all classes of bounded expansion.)

    More generally, by \Cref{thm:equivalences}.\ref{c:patt}, every monadically dependent graph class is ISMF.
    
    (Every nowhere dense class is monadically dependent~\cite{stable_graphs,adler2014interpreting}, as is every class of bounded twin-, merge-, or flip-width and every monadically stable class~\cite{bonnet2021twin, dreier2025merge, torunczyk2023flip, dreier2024first}.)
\end{enumerate}

We show that there also exist natural graph classes that are ISMF yet neither induced-minor-free (for $d\geq 3$) nor monadically dependent (for $d\geq 2$):

\begin{restatable}{theorem}{ballsismf}\label{thm:balls-ismf}
    For every $d\in \N$, the class of intersection graphs of balls in $\R^d$ is ISMF.
\end{restatable}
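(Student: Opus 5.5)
We prove the statement through the characterization in \cref{thm:equivalences}. Intersection graphs of balls in $\R^d$ form a hereditary class, since deleting balls gives an induced subgraph. By the implication \ref{c:patt}$\Rightarrow$\ref{c:ismf}, it therefore suffices to show the following: for every fixed $r\geq 3$ there is $k$ such that neither $\clique{r}{k}$ nor $\web{r}{k}$ is the intersection graph of a family of balls in $\R^d$. We argue by contradiction, taking $k$ huge compared to $d$ and $r$ and assuming a representation by balls exists.

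\emph{The subdivided clique.} $\clique{r}{k}$ with $r\geq 3$ is triangle-free. Hence any ball representation of it has ply at most $2$: no point lies in three balls, because three balls sharing a point pairwise intersect and would form a triangle. We then invoke the classic result of Miller, Teng, Thurston, and Vavasis that intersection graphs of balls in $\R^d$ of bounded ply admit balanced separators of size $\Oh_d(n^{1-1/d})$. This holds hereditarily, and separators of this kind pass to shallow minors, so the class has polynomial expansion and is nowhere dense. A nowhere dense class cannot contain $r$-subdivisions of arbitrarily large cliques as subgraphs. This already yields the bound on $k$ for $\clique{r}{k}$.

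\emph{The web.} $\web{r}{k}$ contains the large clique formed by a principal vertex $v$ and its neighborhood $C_v$, so bounded ply is lost only near principal vertices; we localize it there. The balls of $C_v$ pairwise intersect. By Danzer's theorem, a pairwise intersecting family of balls in $\R^d$ can be pierced by $c_d$ points. So some point $p_v$ lies in at least $(|C_v|)/c_d$ balls of $C_v$. We keep only the subdivision paths leaving $v$ through these balls, and apply Ramsey's theorem on pairs of principal vertices to obtain a sub-web $\web{r}{k'}$ in which every kept path at every kept principal vertex starts in a ball containing that principal vertex's point $p_v$. Every vertex outside $\bigcup_v (\{v\}\cup C_v)$ lies on a path of the $r$-subdivision at distance at least $2$ from all principal vertices, and these vertices induce a triangle-free, hence ply-$\le 2$, family.

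We then contract each set $\{v\}\cup C_v$ into a single ``hub''. This set is connected, and its kept balls share the point $p_v$, so it behaves like a single star-shaped region around $p_v$. We aim to show that the contracted structure, which contains a $(r-2)$-subdivision of $K_{k'}$ as a subgraph, still has sublinear separators. The plan is to rerun the Miller--Teng--Thurston--Vavasis sphere-separator argument: a random sphere through the relevant region cuts $\Oh_d(n^{1-1/d})$ ply-$\le 2$ balls, plus hubs. We must bound the number of hubs a sphere can cross in a way that pays for the separator, or alternatively delete the hub vertices and pay $k'$ per separator while working with $n\gg k'$ subdivision vertices. Since $k'^2 r$ vertices would need separators of size $\Omega(k'^2)$ in a subdivided clique, while the sphere bound gives $\Oh_d((k'^2r)^{1-1/d})+k'$, this is a contradiction for $k'$ large.

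\emph{Main obstacle.} The hardest step is this hub-handling in the web case. The issue is making the separator argument tolerate $k'$ unbounded-ply cliques, each pierced by one point, without the high-ply balls ruining the ply-$2$ analysis. If the sphere-separator adaptation fails, the fallback is a purely geometric argument. In that argument, we would use a kissing-number style bound on disjoint balls touching a neighborhood of $p_v$, comparing ball sizes along each path, to show that the second-layer balls around different hubs cannot realize all $\binom{k'}{2}$ non-crossing connections.
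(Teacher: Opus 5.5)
\textbf{There is a genuine gap in the web case.} Your handling of $\clique{r}{k}$ is sound. Triangle-freeness gives ply at most $2$, subgraphs of bounded-ply ball graphs have strongly sublinear separators and hence polynomial expansion (Dvo\v{r}\'ak--Norin), and that excludes large $r$-subdivided cliques. The web case, however, is where all the difficulty lies, because there the ply is unbounded, and it is not proved. The plan you sketch cannot work as stated:
\begin{itemize}
\item The counting it relies on is false. An $r$-subdivided $K_{k'}$, contracted or not, has balanced separators of size $O(k')$: its principal vertices (or hubs). So it does not ``need separators of size $\Omega(k'^2)$''.
\item Your sphere bound $\Oh_d((k'^2r)^{1-1/d})+k'$ is already at least $k'$ from the hub term alone. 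No contradiction can come from comparing separators of a single graph.
\item In the clique case the contradiction came from the expansion of a whole subgraph-closed class. Reproducing that here would require sublinear separators for all subgraphs and shallow minors of the hub-contracted structures, which is essentially equivalent to what you want to prove.
\item Secondary issue: keeping, at each principal vertex, only the paths through its most popular piercing point does not let Ramsey produce a sub-web in which every pair is kept at \emph{both} ends. The ``kept'' relation can look like a regular tournament. You would have to colour pairs by the piercing-point indices at both ends and pass to bipartite patterns.
\end{itemize}

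\textbf{The missing idea is a global extremal choice of radii.} This turns the problem into a simple volume packing, and it is what the paper does. It works directly with an $r$-shallow induced model $\phi$ of the $1$-subdivision of $K_h$, so it needs neither \cref{thm:equivalences}, nor Danzer's theorem, nor Ramsey, nor separators.
\begin{itemize}
\item In each principal branch set $\phi(x)$, let $B_{\gamma(x)}$ be the ball of largest radius. Choose $z$ for which this radius is smallest, and rescale it to $1$.
\item For $y\neq z$, take a $\gamma(z)$-$\gamma(y)$ path $P_y$ of length at most $6r+2$ inside $\phi(z)\cup\phi(zy)\cup\phi(y)$. Let $w_y$ be its first vertex whose ball has radius larger than $1$.
\item By the choice of $\gamma(z)$, $w_y\notin\phi(z)$. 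So the $w_y$ lie in distinct, pairwise non-adjacent branch sets, and the balls $B_{w_y}$ are pairwise disjoint.
\item Each $B_{w_y}$ contains a unit ball meeting the ball of its predecessor on $P_y$. All earlier balls on $P_y$ have radius less than $1$, so these unit balls lie in the ball of radius $12r+5$ concentric with $B_{\gamma(z)}$.
\item Volume packing then gives $h\le 1+(12r+5)^d$.
\end{itemize}
Your fallback of ``comparing ball sizes along each path'' points this way. However, a local kissing argument at a single hub $p_v$ has no scale to pack against. The balls of $C_v$ through $p_v$ can be arbitrarily large, and the pairwise disjoint second-layer balls touching them can be tiny and far from $p_v$. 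It is the min--max choice of $z$ that forces every ball before $w_y$ to be small.
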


The proof of \cref{thm:balls-ismf} easily extends to intersection graphs of families of objects in $\R^d$ with universally bounded aspect ratio; we omit the details.

\paragraph{AI Disclosure.}
ChatGPT 5.5 Pro and Claude Code Opus 4.8 were used to turn proof sketches for the Ramsey arguments in \Cref{lem:fan-ramsey} and \Cref{lem:extract-bipatterns} into first drafts, which were further polished by the authors.
The authors verified the correctness and originality of all AI generated text.
All mathematical results are due to the authors.
\section{Preliminaries}\label{sec:prelims}

We denote $\N=\{0,1,2,\ldots\}$. For a real $\alpha$, by $\R_{>\alpha}$ and $\R_{\geq \alpha}$ we denote the sets of reals that are larger, respectively not smaller than $\alpha$.

\subsection{Graphs}

We use standard graph notation and terminology. For a vertex $v$ in a graph $G$, by $N_G[v]$ we denote the \emph{close neighborhood} of $v$ in $G$: the set consisting of $v$ and all its neighbors.

Recall that a graph $H$ is a \emph{minor} of a graph $G$ if there exists a \emph{model} of $H$ in $G$, that is, a mapping $\phi$ that assigns every vertex $v$ of $H$ a subset $\phi(v)$ of $V(G)$ that induces a connected subgraph so that the subsets $\{\phi(v)\colon v\in V(H)\}$ (called often \emph{branch sets}) are pairwise disjoint and for every edge $uv$ of $H$, there is an edge in $G$ connecting a vertex of $\phi(u)$ and a vertex of $\phi(v)$. The model $\phi$ is \emph{induced} if this latter condition is in fact an equivalence: for every pair of distinct vertices $u,v\in V(H)$, $u$ and $v$ are adjacent in $H$ if and only if there is an edge connecting $\phi(u)$ and $\phi(v)$ in $G$. Further, we say that $\phi$ is \emph{$r$-shallow}, for $r\in \N$, if $G[\phi(v)]$ has radius at most $r$ for every $v\in V(H)$. Finally, $G$ contains $H$ as an \emph{($r$-shallow, induced) minor} if $G$ contains an ($r$-shallow, induced, respectively) model of $H$.

We will need the following extremal statement, which is a corollary of a result due to Alon, Krivelevich, and Sudakov~\cite[Theorem~2.2]{DBLP:journals/cpc/AlonKS03}.
This exact formulation appears as~\cite[Corollary~2.3]{DBLP:conf/lics/PilipczukST18a}.

\begin{theorem}\label{thm:alon-density}
    For all positive integers $h, n$, every $n$-vertex graph with more than $\binom{h+1}{2} n^{3/2}$ edges
    contains $K_h$ as a $1$-shallow minor. 
\end{theorem}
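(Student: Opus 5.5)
The plan is to prove directly the stronger statement that such a graph contains the $1$-subdivision of $K_h$ as a subgraph, and then read off a $1$-shallow minor. Given a $1$-subdivision with principal vertices $v_1,\dots,v_h$ and pairwise distinct middle vertices $w_{ij}$ (each adjacent to $v_i$ and $v_j$), set $\phi(v_i)=\{v_i\}\cup\{w_{ij}: j>i\}$. This is a star centred at $v_i$, so it has radius at most $1$. The sets are pairwise disjoint, and $w_{ij}\in\phi(v_i)$ is adjacent to $v_j\in\phi(v_j)$. Hence $\phi$ is a $1$-shallow model of $K_h$.

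\textbf{Step 1: cleanup.} Let $G$ have average degree $d>2\binom{h+1}{2}\sqrt n$. Repeatedly deleting vertices of degree less than $d/2$ yields a nonempty subgraph $G'$ of minimum degree at least $d/2$. Since $G'$ has at most $n$ vertices, it still satisfies $\delta(G')\ge \binom{h+1}{2}\sqrt n$. The exact value of the constant $\binom{h+1}{2}$ is not important for the plan; we only need that $\delta(G')^2$ dominates $h^{\Oh(1)}\cdot n$ with a suitable polynomial, and this has to be balanced carefully in Step 3. From now on, write $G$ for $G'$ and $D$ for its minimum degree.

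\textbf{Step 2: dependent random choice.} Call a pair of vertices \emph{bad} if they have fewer than $t:=h+\binom h2$ common neighbours. Pick a vertex $w$ uniformly at random and let $T=N(w)$. Then $\mathbb E|T|\ge D$, and hence $\mathbb E|T|^2\ge D^2$. A fixed bad pair lies inside $T$ only if $w$ is one of its fewer than $t$ common neighbours, so the expected number $Y$ of bad pairs inside $T$ is at most $\binom n2\cdot t/n< tn/2$. Fix $w$ with $|T|^2-\lambda Y\ge D^2-\lambda tn/2$, where $\lambda$ is chosen so that the right-hand side is positive. For this $w$ we get $Y<|T|^2/\lambda$.

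\textbf{Step 3: pick principal vertices.} Consider the graph on $T$ whose edges are the bad pairs; it has fewer than $|T|^2/\lambda$ edges. By Turán's theorem (in the Caro--Wei form), it contains an independent set of size at least $|T|/(1+2|T|/\lambda)$. Taking $\lambda\approx 4h$ makes this at least $h$, provided $|T|\ge 2h$; this also holds because $|T|\ge D$. The requirement $D^2>\lambda t n/2\approx h\cdot h^2 n$ means $D\gtrsim h^{3/2}\sqrt n$. This is exactly where the hypothesis $D\gtrsim \binom{h+1}{2}\sqrt n\approx h^2\sqrt n$ is consumed. So we obtain $h$ vertices $v_1,\dots,v_h$ such that every pair has at least $t$ common neighbours.

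\textbf{Step 4: greedy middles.} Process the $\binom h2$ pairs one by one. For the pair $\{v_i,v_j\}$, choose as $w_{ij}$ a common neighbour that is neither a principal vertex nor a previously chosen middle vertex. At most $h+\binom h2-1<t$ vertices are forbidden, so a choice always exists. This produces the $1$-subdivision of $K_h$, and the first paragraph converts it into the required $1$-shallow minor.

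The main obstacle is the constant bookkeeping in Steps 2--3. Here I would try first to match the exact threshold $\binom{h+1}{2}n^{3/2}$ by tuning $\lambda$ and $t$. If that turns out to be tight, the fallback is simply to invoke \cite[Theorem~2.2]{DBLP:journals/cpc/AlonKS03} for the $1$-subdivision, and then apply the conversion in the first paragraph, as in \cite[Corollary~2.3]{DBLP:conf/lics/PilipczukST18a}.
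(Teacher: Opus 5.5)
Your proof is correct, and it does more than the paper, which proves nothing here. The paper imports the statement from Alon--Krivelevich--Sudakov in the exact form of \cite[Corollary~2.3]{DBLP:conf/lics/PilipczukST18a}, relying on their dependent-random-choice embedding of the $1$-subdivision of $K_h$ as a subgraph. The conversion of that subdivision into a $1$-shallow minor is the star construction of your first paragraph.

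Conceptually you follow the same route, but your argument is self-contained. Your variant of dependent random choice uses a single random vertex, a first-moment bound on $|T|^2-\lambda Y$, and a Caro--Wei cleanup. This replaces the more common ``delete one vertex per bad pair'' step and works fine. (Minor point: $|T|\ge D$ holds deterministically, so $\mathbb{E}|T|^2\ge D^2$ is immediate. Also, since $w$ is sampled from $G'$ with $n'\le n$ vertices, $\mathbb{E}Y<tn'/2\le tn/2$.)

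The constant bookkeeping you flagged as the main obstacle closes at the stated threshold, so you do not need the fallback to the citation. With $\lambda=4h$ and $t=\binom{h+1}{2}$ you need
\[
D^2>\lambda t n/2=2h\tbinom{h+1}{2}n .
\]
Since $D>\binom{h+1}{2}\sqrt n$, this follows as soon as $\binom{h+1}{2}\ge 2h$, that is, $h\ge 3$; it holds strictly even at $h=3$. In that range $|T|\ge D\ge 2h$ also holds, as Step~3 requires. In fact your argument only needs $D>h\sqrt{h+1}\,\sqrt n$, which improves the constant from order $h^2$ to order $h^{3/2}$.

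For $h\le 2$ your parameters fail, because $\binom{h+1}{2}<2h$. You should dispose of these cases explicitly. This is immediate: the hypothesis forces at least one edge, and any graph with an edge contains $K_2$, and hence $K_1$, as a $1$-shallow minor.
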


\subsection{Metric graphs and length spaces}

By a \emph{metric graph} we mean the simplicial complex endowed with a metric
formed from an edge-weighted graph $G$ by taking the vertex set of $G$ and gluing, for every edge $uv$ of $G$, an interval of length equal to the weight of $uv$ to the vertices $u$ and $v$.
Note that thus, the point set of a metric graph $G$ consists not only of the original vertices of $G$, but also of all the points of all the intervals modeling the edges.
We also note that this construction can be applied when the graph $G$ is infinite.

Following previous works in coarse graph theory, in this work
we will rely on the notion of a \emph{length space}, a generalization of metric graphs,
which is a metric space equipped with a robust notion of (shortest) paths. Since we are predominantly interested in metric spaces derived from graphs, we typically denote a metric space by $G$, where $G$ is understood as the set of points in the space and $\dist_G$ denotes the metric; the subscript can be omitted if clear from the context. A \emph{path} in $G$ is a homeomorphic image of an interval. We may also speak about an \emph{$a$-$b$-path} to indicate that $a$ and $b$ are the endpoints. The \emph{length} of a path $P$ is defined as the supremum of the values $\sum_{i=1}^n \dist_G(x_{i-1},x_i)$, where $x_0,x_1,\ldots,x_n$ are points lying in this order on $P$ with $x_0$ and $x_n$ being the endpoints. Then $G$ is a \emph{length space} if 
\[\dist_G(a,b)=\inf \{\,\mathrm{length}(P)\colon P\textrm{ is an }a\textrm{-}b\textrm{-path}\,\},\qquad \textrm{for all }a,b\in G.\]
Note that this value may be infinite if there is no $a$-$b$-path.

It is immediate that a metric graph is a length space and, if the metric graph comes
from a finite edge-weighted graph, the infimum in the definition of the distance
is actually attained by some path. However, as our proofs deal with general length spaces,
there will be a number (very peculiar for readers working in graph algorithms)
moments where from $\dist_G(a,b) \leq d$ for some $a,b \in G$
and $d \in \Rp$, we will only be able to get an $a$-$b$-path of length at most $d+\eps$ for any $\eps> 0$,
but not necessarily of length at most $d$.


Let $G$ be a length space.
For two subsets of points $A,B\subseteq G$, we denote
\[\dist_G(A,B)\coloneqq \inf_{a\in A, b\in B} \dist_G(a,b).\]
For a subset $Z\subseteq G$ and $\beta\in \Rp$, we denote
\[\Ball_G(Z,\beta)\coloneqq \{x\in G\mid \dist_G(x,Z)\leq \beta\}.\]
In a number of places, we will ``drill holes'' in a length space. 
That is, for a set $B \subseteq G$, we may define a metric space $G-B$ by endowing this set with the metric
\[\dist_{G-B}(a,b)\coloneqq \inf \{\,\textrm{length}(P)\colon P\textrm{ is an }a\textrm{-}b\textrm{-path disjoint with }B\,\}.\]
It is straightforward to check that $G-B$ is again a length space. For a set $A\subseteq G$, the length space \emph{induced} by $A$ is the length space $G[A]\coloneqq G-(G-A)$.

The weak radius of a set $A\subseteq G$ is the infimum of $\rho\in \Rp$ such that $A\subseteq \Ball_G(x,\rho)$ for some $x\in G$, and the weak diameter of $A$ is the supremum of $\dist_G(a,b)$ for $a,b\in A$. The \emph{strong radius} and the \emph{strong diameter} of $A$ are defined as the weak radius and the weak diameter of $A$ in $G[A]$. Note that the weak radius/diameter of a set $A\subseteq G$ is always a lower bound on its strong radius/diameter.


Let $G$ be a length space and $A\subseteq G$.
Recall that for $\alpha\in \Rp$, we say that $A$ is \emph{$\alpha$-scattered} if $\dist_G(a,b)>\alpha$ for all $a,b\in A$. More generally, for $Z\subseteq G$ we say that $A$ is \emph{$(\alpha,Z)$-scattered} if $A\setminus Z$ is $\alpha$-scattered in $G-Z$.



We have the following immediate observation.
\begin{lemma}\label{lem:dist-GZ-to-G}
    Let $G$ be a length space, $\alpha\in \Rp$, and $A,B,Z \subseteq G$ be
    such that $\dist_G(A,Z) \geq \alpha/2$, $\dist_G(B,Z) \geq \alpha/2$, and  $\dist_{G-Z}(A,B) \geq \alpha$. Then also $\dist_G(A,B) \geq \alpha$. 
\end{lemma}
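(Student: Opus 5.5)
We argue by contradiction: suppose $\dist_G(A,B)<\alpha$, and fix $a\in A$, $b\in B$ together with an $a$-$b$-path $P$ in $G$ of length $\ell<\alpha$. Such a path exists because $G$ is a length space: the infimum defining the distance is below $\alpha$, so some path attains a length strictly below $\alpha$.

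If $P$ is disjoint from $Z$, then $P$ is also a path in $G-Z$. Hence $\dist_{G-Z}(A,B)\leq \dist_{G-Z}(a,b)\leq \ell<\alpha$, which contradicts the third assumption.

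Otherwise $P$ meets $Z$ at some point $z$. The path $P$ splits at $z$ into an $a$-$z$ subpath and a $z$-$b$ subpath, with lengths adding up to $\ell$. This uses the fact that length is additive along a path; one can also bound $\dist_G(a,z)+\dist_G(z,b)\leq \ell$ directly from the supremum definition of length by inserting $z$ as a partition point. On the other hand, $\dist_G(a,z)\geq \dist_G(A,Z)\geq \alpha/2$ and $\dist_G(z,b)\geq \dist_G(B,Z)\geq\alpha/2$. Therefore $\ell\geq \alpha$, which is again a contradiction.

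The only delicate point is the length-space subtlety mentioned earlier in the paper, namely that distances need not be attained by paths. This is handled by choosing $P$ of length strictly less than $\alpha$, rather than of length exactly $\dist_G(a,b)$, and then using the strict inequalities above. A minor additional check is that the subpath of $P$ avoiding $Z$ really is a path in $G-Z$ with the same length. This holds because the length of a path depends only on $\dist_G$ restricted to points of the path, and the definition of $\dist_{G-Z}$ takes the infimum over exactly such paths measured by their length in $G$.
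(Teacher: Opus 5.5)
Your proposal is correct and follows the same route as the paper's proof: split on whether an $a$-$b$-path meets $Z$, use $\dist_{G-Z}(A,B)\geq\alpha$ if it avoids $Z$, and otherwise split at a point of $Z$ and bound each piece by $\alpha/2$. The only difference is that you argue by contradiction with a path of length strictly below $\alpha$, whereas the paper shows directly that every $a$-$b$-path has length at least $\alpha$.
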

\begin{proof}
Let $P$ be an $a$-$b$-path in $G$, for any $a\in A$ and $b\in B$.
If $P$ is disjoint with $Z$,
then the length of $P$ is at least $\alpha$ due to the assumption $\dist_{G-Z}(A,B) \geq \alpha$.
Otherwise, consider any $x \in P \cap Z$. Then the subpath of $P$ from $a$ to $x$
is of length at least $\alpha/2$, due to $\dist_G(A,Z) \geq \alpha/2$, and similarly
the subpath of $P$ from $x$ to $b$ is of length at least $\alpha/2$. Hence $P$ is of length at least $\alpha$. 
\end{proof}

\subsection{Fat minors}
Let $H$ be a graph, $G$ a length space or a graph, and $\delta\in \Rp$.
A \emph{$\delta$-fat model} of $H$ in $G$ is a map $\phi$ that assigns to every vertex and every edge of $H$
an arc-connected subset of $G$ such that the following holds:
\begin{itemize}
\item For every $e \in E(H)$ and an endpoint $v$ of $e$, we have $\phi(e) \cap \phi(v) \neq \emptyset$.
\item For every two objects $o_1, o_2 \in V(H) \cup E(H)$, if $\{o_1, o_2\}$ is not a pair consisting of an edge and one of its endpoints,
then $\dist_G(\phi(o_1),\phi(o_2)) \geq \delta$.
\end{itemize}
Again, the sets $\{\phi(o)\colon o\in V(H)\cup E(H)\}$ will be called \emph{branch sets}.
We then say that $H$ is a \emph{$\delta$-fat minor} of $G$ if there is a $\delta$-fat model of $H$ in $G$.

For $\rho_V,\rho_E\in \Rnn$, we say that a $\delta$-fat model $\phi$ of $H$ is \emph{$(\rho_V,\rho_E)$-shallow} if
for every $v \in V(H)$, the strong radius of $\phi(v)$ is at most $\rho_V$, and
for every $e \in E(H)$, the strong radius of $\phi(e)$ is at most~$\rho_E$.
By a \emph{$\rho$-shallow model} we mean a $(\rho,\rho)$-shallow model. 

Containing shallow fat minors is defined naturally through the existence of suitable models.

We have the following observation about the transitivity of the (fat) minor relation.
\begin{lemma}\label{lem:minor-of-fat-minor}
    Let $H_1$ and $H_2$ be graphs, $G$ be a length space, and $\delta\in \Rp$.
    Suppose $H_1$ is a minor of $H_2$ and $H_2$ is a $\delta$-fat minor of $G$. Then $H_1$ is a $\delta$-fat minor of $G$.
    
    Furthermore, if $H_1$ is an $r$-shallow minor of $H_2$ and $H_2$ is a $(\rho_V, \rho_E)$-shallow $\delta$-fat minor of $G$, for some $r\in \N$ and $\rho_V,\rho_E\in \Rnn$, then 
    $H_1$ is a $((2r+1)\rho_V+2r\rho_E, \rho_E)$-shallow $\delta$-fat minor of $G$.
\end{lemma}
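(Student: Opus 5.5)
The plan is to compose the two models directly. Fix an $r$-shallow model $\psi$ of $H_1$ in $H_2$ (for the first part, an arbitrary model $\psi$) and a $\delta$-fat model $\phi$ of $H_2$ in $G$. For each $v\in V(H_1)$, the branch set $\psi(v)$ induces a connected subgraph of $H_2$; fix a spanning tree $T_v$ of $H_2[\psi(v)]$, chosen as a BFS tree from a center $c_v$ so that every vertex of $\psi(v)$ is at tree-distance at most $r$ from $c_v$. Set
\[\phi'(v)\coloneqq \bigcup_{u\in \psi(v)}\phi(u)\ \cup \bigcup_{e\in E(T_v)}\phi(e).\]
For each edge $v w\in E(H_1)$, pick one edge $f_{vw}=u u'$ of $H_2$ with $u\in\psi(v)$, $u'\in\psi(w)$, and set $\phi'(vw)\coloneqq\phi(f_{vw})$. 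Arc-connectivity of $\phi'(v)$ follows because each $\phi(e)$ for $e=uu'\in E(T_v)$ is arc-connected and meets both $\phi(u)$ and $\phi(u')$, and $T_v$ is connected. Incidence holds since $\phi(f_{vw})$ meets $\phi(u)\subseteq\phi'(v)$ and $\phi(u')\subseteq\phi'(w)$.

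For fatness, observe that every branch set of $\phi'$ is a union of branch sets of $\phi$, and the collections of $H_2$-objects used are pairwise disjoint (distinct vertices of $H_1$ use disjoint vertex sets and edges inside them; an edge $f_{vw}$ has endpoints in two different branch sets, so it is not an edge of any $T_x$, and distinct $H_1$-edges receive distinct $f$'s). Now take two non-incident objects $o_1,o_2$ of $H_1$ and any constituents $a\subseteq\phi'(o_1)$, $b\subseteq\phi'(o_2)$ coming from $H_2$-objects $p,q$. If $p,q$ are not an incident vertex--edge pair of $H_2$, the $\delta$-fatness of $\phi$ gives distance $\ge\delta$. The only danger is $p$ a vertex and $q$ an edge incident to it (or vice versa). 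Case analysis: if $o_1,o_2$ are two vertices $v,w$ of $H_1$, then $q$ is an edge of $T_w$, whose endpoints lie in $\psi(w)$, disjoint from $\psi(v)\ni p$. If $o_1=v$ and $o_2=wx$ with $v\notin\{w,x\}$, then $q=f_{wx}$ has endpoints in $\psi(w)\cup\psi(x)$, not containing $p$; and $p$ an edge of $T_v$ cannot be incident... rather, $p$ an edge of $T_v$ and $q=f_{wx}$ are both edges, fine. Two non-incident edges of $H_1$ give two edges of $H_2$, fine. Taking infima over constituents gives $\dist_G(\phi'(o_1),\phi'(o_2))\ge\delta$.

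For the shallowness bound, edge branch sets are unchanged, so have strong radius $\le\rho_E$. For a vertex $v$, let $z$ be a center of $\phi(c_v)$. Any point $y\in\phi'(v)$ lies in $\phi(u)$ or $\phi(e)$ for $u$ at tree-distance $\le r$ from $c_v$; walk along the tree path $c_v=u_0,\dots,u_k=u$, $k\le r$, inside $\phi'(v)$: within each $\phi(u_i)$ we move via its center (cost $\le2\rho_V$ to cross, $\rho_V$ for the first from $z$), and within each $\phi(u_iu_{i+1})$ we cross via its center at cost $\le2\rho_E$. This yields a path in $G[\phi'(v)]$ of length at most $\rho_V+2r\rho_V+2r\rho_E$ (plus arbitrarily small slack, handled by infimum), plus reaching $y$ inside the last set, which is absorbed by routing through the last set's center instead of crossing it; a careful count gives $(2r+1)\rho_V+2r\rho_E$. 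The main subtlety is this bookkeeping (and that strong radius in $\phi(u)$ dominates strong radius in the larger set $\phi'(v)$, which holds since paths inside a subset remain inside the superset).
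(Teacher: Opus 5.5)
Your proposal is correct and is the same composition argument the paper uses; the paper only asserts that composing the $r$-shallow model of $H_1$ in $H_2$ with the $(\rho_V,\rho_E)$-shallow $\delta$-fat model of $H_2$ in $G$ works, and your BFS spanning tree $T_v$, constituent-by-constituent fatness check, and count of $(2k+1)\rho_V+2k\rho_E$ for points of $\phi(u_k)$ with $k\le r$ supply the omitted details. Your final bookkeeping sentence is muddled, but the count is right, and points of $\phi(e)$ for $e\in E(T_v)$ are reached through the parent endpoint at depth at most $r-1$, at cost at most $(2r-1)\rho_V+2r\rho_E$.
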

\begin{proof}
    It is easy to check that the composition of an $r$-shallow model of $H_1$ in $H_2$ and a $(\rho_V,\rho_E)$-shallow $\delta$-fat model of $H_2$ in $G$ yields a $((2r+1)\rho_V+2r\rho_E, \rho_E)$-shallow $\delta$-fat model of $H_1$ in~$G$.
\end{proof}

\part{Drill-flatness in length spaces}
\section{Constructing a fat minor}

In this section we prove two auxiliary lemmas, which will later allow us to construct fat minors whenever a certain configuration of distance between points in a length spaced is uncovered.

\begin{figure}[htbp]
    \centering
    \begin{tikzpicture}[
    >=Latex,
    every node/.style={font=\small},
    hvertex/.style={circle,fill=black,inner sep=1.6pt},
    vimage/.style={circle,fill=blue!70!black,draw=white,line width=.4pt,inner sep=2.2pt},
    eimage/.style={circle,fill=orange!90!black,draw=white,line width=.4pt,inner sep=2.4pt},
    branch/.style={draw=blue!70!black,densely dashed,line width=.95pt},
    edgeball/.style={draw=orange!90!black,densely dashed,line width=.95pt},
    goodpath/.style={blue!65!black,line width=1.15pt,line cap=round},
    avoid/.style={draw=red!70!black,dashed,line width=.9pt},
    zset/.style={fill=red!70!black,draw=red!80!black,line width=.5pt},
    smalllabel/.style={font=\scriptsize}
]

\def\BranchR{1.15}
\def\EdgeR{0.35}

\pgfmathsetmacro{\TouchDist}{\BranchR+\EdgeR}
\pgfmathsetmacro{\Side}{2*\TouchDist}
\pgfmathsetmacro{\TriHeight}{sqrt(3)*\TouchDist}

\coordinate (Hu) at (0.35,1.00);
\coordinate (Hv) at (2.05,1.00);
\coordinate (Hw) at (1.20,2.35);

\node[font=\large] at (1.2,0.10) {$H$};

\draw[black,line width=.75pt] (Hu)--(Hv)--(Hw)--cycle;

\node[hvertex,label=below:$u$] at (Hu) {};
\node[hvertex,label=below:$v$] at (Hv) {};
\node[hvertex,label=above:$w$] at (Hw) {};

\node[smalllabel] at (1.20,.73) {$e=uv$};

\draw[->,thick] (2.45,1.65) -- (3.1,1.65)
    node[midway,above] {$f$};

\path[fill=gray!5,draw=black!35,line width=.9pt]
    plot[smooth cycle,tension=.85] coordinates {
        (3.45,.00) (3.75,5.70) (6.95,6.05)
        (10.65,5.25) (11.05,1.20) (8.80,-.10) (5.05,-.25)
    };

\node[font=\large] at (6.42,0.1) {$G$};

\coordinate (Fu) at (4.85,1.35);
\coordinate (Fv) at ($(Fu)+(\Side,0)$);
\coordinate (Fw) at ($(Fu)+(\TouchDist,\TriHeight)$);

\coordinate (Fuv) at ($(Fu)!0.5!(Fv)$);
\coordinate (Fuw) at ($(Fu)!0.5!(Fw)$);
\coordinate (Fvw) at ($(Fv)!0.5!(Fw)$);

\draw[goodpath] (Fuv) .. controls ($(Fuv)+(-.45,-.28)$) and ($(Fu)+(.55,-.20)$) .. (Fu);
\draw[goodpath] (Fuv) .. controls ($(Fuv)+(.45,-.28)$) and ($(Fv)+(-.55,-.20)$) .. (Fv);

\draw[goodpath] (Fuw) .. controls ($(Fuw)+(-.55,-.12)$) and ($(Fu)+(-.20,.65)$) .. (Fu);
\draw[goodpath] (Fuw) .. controls ($(Fuw)+(.05,.62)$) and ($(Fw)+(-.65,-.10)$) .. (Fw);

\draw[goodpath] (Fvw) .. controls ($(Fvw)+(.55,-.12)$) and ($(Fv)+(.20,.65)$) .. (Fv);
\draw[goodpath] (Fvw) .. controls ($(Fvw)+(-.05,.62)$) and ($(Fw)+(.65,-.10)$) .. (Fw);

\foreach \p in {Fu,Fv,Fw}{
    \draw[branch] (\p) circle (\BranchR);
}

\foreach \p in {Fuv,Fuw,Fvw}{
    \draw[edgeball] (\p) circle (\EdgeR);
}

\draw[avoid] (9.25,1.45) ellipse (.72 and .47);
\filldraw[zset] (9.25,1.45) ellipse (.25 and .14);
\node[white,font=\scriptsize] at (9.25,1.45) {$Z$};
\node[red!70!black,smalllabel,anchor=west] at (9.95,1.45)
    {$N_{\delta/2}(Z)$};

\node[vimage] at (Fu) {};
\node[vimage] at (Fv) {};
\node[vimage] at (Fw) {};

\node[eimage] at (Fuv) {};
\node[eimage] at (Fuw) {};
\node[eimage] at (Fvw) {};

\node[smalllabel] at ($(Fu)+(-0.32,-.32)$) {$f(u)$};
\node[smalllabel] at ($(Fv)+(0.02,-.32)$) {$f(v)$};
\node[smalllabel] at ($(Fw)+(0,0.38)$) {$f(w)$};

\node[smalllabel] at ($(Fuv)+(0,-.62)$) {$f(uv)$};
\node[smalllabel] at ($(Fuw)+(-.72,.18)$) {$f(uw)$};
\node[smalllabel] at ($(Fvw)+(.72,.18)$) {$f(vw)$};


\end{tikzpicture}
    \caption{a $(\frac{1}{2}-\xi-\delta,2\xi+\delta)$-shallow $\delta$-fat minor model of $G$ constructed from $f: V(H)\cup E(H)\rightarrow G-Z$ in \Cref{lem:points-to-fat-minor}. The small balls (centered at $f(e)$) have radius $\frac12-\xi-\delta$, while the large balls (centered at $f(v)$) have radii at most $\frac12-\xi-\delta$.}
    \label{fig:shallow-fat-1}
\end{figure}

\begin{lemma}\label{lem:points-to-fat-minor}
    Let $H$ be a graph with no isolated vertices, $G$ a length space, $Z \subseteq G$, 
    and $0 < \delta \leq \xi \leq \frac{1}{8}$ be reals.
    Assume there exists a function $f \colon V(H) \cup E(H) \to G - Z$ with the following properties:
    \begin{itemize}
    \item For every two distinct vertices $u,v \in V(H)$, we have $\dist_{G-Z}(f(u),f(v)) > 1$.
    \item For every two distinct edges $e_1,e_2 \in E(H)$, we have $\dist_{G-Z}(f(e_1), f(e_2)) > 4\xi + 3\delta$.
    \item For every $e \in E(H)$ and an endpoint $v$ of $e$, 
     there exists  in $G$ an $f(e)$-$f(v)$-path $P_{e,v}$
     of length at most $\frac{1}{2}+\xi$ that does not contain any point within distance
     less than $\delta/2$ from $Z$.
    \end{itemize}
    Then $G$ contains $H$ as a $(\frac{1}{2}-\xi-\delta,2\xi+\delta)$-shallow $\delta$-fat minor.
\end{lemma}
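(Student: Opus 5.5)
The model is built directly from the paths $P_{e,v}$: each path is cut into an ``edge part'' near $f(e)$ and a ``vertex part'' near $f(v)$. Fatness is then verified in $G-Z$ using the distance assumptions on $f$, exploiting that all our paths stay away from $Z$.

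\emph{Construction.} For each edge $e=uv$, first note that $P_{e,u}\cup P_{e,v}$ contains an $f(u)$-$f(v)$-path avoiding $Z$. Since $\dist_{G-Z}(f(u),f(v))>1$ and each of the two paths has length at most $\frac12+\xi$, each has length at least $\frac12-\xi$. Because $\xi\le \frac18$ and $\delta\le\xi$, we get $\frac12-\xi\ge 3\xi\ge 2\xi+\delta$. Hence on $P_{e,v}$ there is a point $c_{e,v}$ with the subpath from $f(e)$ to $c_{e,v}$ of length exactly $2\xi+\delta$. We set
\begin{itemize}
\item $\phi(e)$ to be the union of the subpaths $P_{e,u}[f(e),c_{e,u}]$ and $P_{e,v}[f(e),c_{e,v}]$;
\item $\phi(v)$ to be the union, over all edges $e$ incident to $v$, of the subpaths $P_{e,v}[c_{e,v},f(v)]$.
\end{itemize}
The set $\phi(v)$ is nonempty because $H$ has no isolated vertices. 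It is arc-connected since all pieces contain $f(v)$, and each piece has length at most $\frac12+\xi-(2\xi+\delta)=\frac12-\xi-\delta$; hence the strong radius of $\phi(v)$ around $f(v)$ is at most $\frac12-\xi-\delta$. Similarly, $\phi(e)$ is arc-connected through $f(e)$ and has strong radius at most $2\xi+\delta$. The set $\phi(e)$ meets $\phi(v)$ at $c_{e,v}$.

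\emph{Reducing to $G-Z$.} Every point of every branch set lies on some $P_{e,v}$, so it is at distance at least $\delta/2$ from $Z$. Suppose $x,y$ are two such points with $\dist_G(x,y)<\delta$, and take an $x$-$y$-path $Q$ of length $L<\delta$. Every point of $Q$ is within $L/2<\delta/2$ of $x$ or of $y$, so $Q$ avoids $Z$. Since this works for any such $Q$, taking the infimum gives $\dist_{G-Z}(x,y)=\dist_G(x,y)$. Therefore, to reach a contradiction, it suffices to assume $\dist_{G-Z}(x,y)<\delta$. All the subpaths used above also avoid $Z$, so distances measured along them upper-bound distances in $G-Z$.

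\emph{Fatness cases.}
\begin{enumerate}
\item Two vertices $u\ne v$, with $x\in\phi(u)$ and $y\in\phi(v)$. Then $\dist_{G-Z}(f(u),f(v))<2(\frac12-\xi-\delta)+\delta\le 1$, a contradiction.
\item Two edges $e_1\ne e_2$. Then $\dist_{G-Z}(f(e_1),f(e_2))<2(2\xi+\delta)+\delta=4\xi+3\delta$, a contradiction.
\item A vertex $v$ and a non-incident edge $e=ab$. This is the delicate case, since the crude estimate only gives $1+2\xi+\delta$. Instead, say $y$ lies on $P_{e,a}$ at path-distance $t\le 2\xi+\delta$ from $f(e)$. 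Then the remaining subpath gives $\dist_{G-Z}(y,f(a))\le \frac12+\xi-t$. Together with $\dist_{G-Z}(x,f(v))\le\frac12-\xi-\delta$, this yields
\[\dist_{G-Z}(f(a),f(v))<\tfrac12+\xi-t+\delta+\tfrac12-\xi-\delta=1-t\le 1,\]
a contradiction. The case $y\in P_{e,b}$ is symmetric.
\end{enumerate}
The only remaining pairs are an edge together with one of its endpoints, which are exempt from the fatness requirement. This gives the required $(\frac12-\xi-\delta,2\xi+\delta)$-shallow $\delta$-fat model of $H$.

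I expect the main obstacle to be case 3, where the estimate has to use the position of $y$ along the path rather than only the radius of $\phi(e)$; the reduction from $G$-distances to $G-Z$-distances is the other point that needs care.
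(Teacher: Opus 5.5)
Your proof is correct and follows essentially the same route as the paper: the same cut of $P_{e,v}$ at path-distance $2\xi+\delta$ from $f(e)$, the same definition of $\phi$, and the same three triangle-inequality cases in $G-Z$. The only differences are minor. You reprove inline the transfer from $G-Z$ to $G$ that the paper obtains from Lemma~\ref{lem:dist-GZ-to-G}. In case~3 the parameter $t$ is not needed, since $\dist_{G-Z}(y,f(a))\le\frac12+\xi$ already gives $\dist_{G-Z}(f(a),f(v))<1$, which is exactly how the paper argues.
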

\begin{proof} See \Cref{fig:shallow-fat-1} for an illustration. Note that the third asserted condition implies that for every $e \in E(H)$ and an endpoint $v$ of $e$,
    \begin{equation*}
        \dist_{G-Z}(f(e),f(v)) \leq \tfrac{1}{2}+\xi.
    \end{equation*}
    
    By the triangle inequality and the first condition of the lemma, applied w.r.t.\ $u$, the other endpoint of $e$, we have:
    \begin{equation}\label{eq:ptfm:ev-dist}
        \dist_{G-Z}(f(e),f(v))\ge\dist_{G-Z}(f(u),f(v))-\dist_{G-Z}(f(e),f(u))>1-(1/2+\xi)=\tfrac{1}{2}-\xi.
    \end{equation}
        
    Pick any $e \in E(H)$ and an endpoint $v$ of $e$. 
    Let $p_{e,v}$ be the point on $P_{e,v}$ within distance (on $P_{e,v}$) exactly $2\xi+\delta$ from $f(e)$; note that such a point
    exists and is distinct from $f(v)$ due to~\eqref{eq:ptfm:ev-dist} and the assumption $\delta \leq \xi \leq \frac{1}{8}$. 
    Point $p_{e,v}$ splits $P_{e,v}$ into two paths; let us denote them by $P_{e,v}^e$ and $P_{e,v}^v$ 
    so that $o$ is an endpoint of $P_{e,v}^o$ for $o \in \{e,v\}$.
    
    For an edge $e = uv \in E(H)$, we define \[\phi(e) \coloneqq P_{e,v}^e \cup P_{e,u}^e,\]
    and for a vertex $v \in V(H)$, we define \[\phi(v) \coloneqq \bigcup \{P_{e,v}^v\colon e\in E(H) \textrm{ is incident to }v\}.\]
    We claim that $\phi$ is the desired model of $H$ in $G$.
    Clearly, every $\phi(e)$ and $\phi(v)$ is arc-connected, and whenever $v$ is an endpoint of $e$, the sets $\phi(v)$ and $\phi(e)$ intersect at~$p_{e,v}$.

    Since the length of every path $P_{e,v}^e$ is exactly $2\xi+\delta$, we have that $\phi(e)$ has strong radius at most $2\xi+\delta$ for every $e\in E(H)$, with $f(e)$ being the witnessing center.
    Furthermore, as for distinct $e_1,e_2 \in E(H)$ we have $\dist_{G-Z}(f(e_1),f(e_2)) > 4\xi+3\delta$. Thus, by the triangle inequality, we infer
    \begin{equation}\label{eq:ptfm:ee} \dist_{G-Z}(\phi(e_1), \phi(e_2)) > (4\xi + 3\delta) - 2(2\xi + \delta) =  \delta. \end{equation}

    Observe that the length of every path $P_{e,v}^v$ is at most $\frac{1}{2}+\xi-(2\xi+\delta) =  \frac{1}{2}-\xi-\delta$.
    Hence, for every $v\in V(H)$ we have that $\phi(v)$ has strong radius at most $\frac{1}{2}-\xi-\delta$, with $f(v)$ being the witnessing center.
    In particular, as for distinct $u,v \in V(H)$ we have $\dist_{G-Z}(f(u),f(v)) > 1$,  we infer by the triangle inequality that
    \begin{equation}\label{eq:ptfm:vv}
     \dist_{G-Z}(\phi(u),\phi(v)) > 1-2(1/2-\xi - \delta) =  2\xi+2\delta > \delta. 
    \end{equation}

    Consider now $v \in V(H)$, $e \in E(H)$, and an endpoint $u$ of  $e$ such that $u \neq v$. 
    As the length of $P_{e,u}$ is at most $\frac{1}{2}+\xi$ while $\dist_{G-Z}(f(u),f(v)) > 1$, we have
    \[ \dist_{G-Z}(f(v), P_{e,u}) > \tfrac{1}{2}-\xi. \]
    Consequently, since $\phi(v)\subseteq \Ball_{G-Z}(f(v),\frac{1}{2}-\xi-\delta)$, we have
    \[ \dist_{G-Z}(\phi(v),P_{e,u}) > \delta. \]
    This implies that if $v$ is not incident with $e$, then
    \begin{equation}\label{eq:ptfm:ve}
     \dist_{G-Z}(\phi(v),\phi(e)) > \delta. 
    \end{equation}
    Since the sets $\phi(\ast)$ are constructed from paths $P_{\ast,\ast}$,
    no point of any set $\phi(o)$ is within distance less than $\delta/2$ from $Z$.
    We infer that \cref{lem:dist-GZ-to-G} applies
    and the inequalities of~\eqref{eq:ptfm:ee}, \eqref{eq:ptfm:vv}, and~\eqref{eq:ptfm:ve}
    hold also for distances in $G$, not only in $G-Z$. 
    This completes the proof that $\phi$ is a $(\frac{1}{2}-\xi-\delta,2\xi+\delta)$-shallow $\delta$-fat model of $H$ in $G$.
\end{proof}

\begin{figure}[htbp]
    \centering
    \begin{tikzpicture}[
    >=Latex,
    every node/.style={font=\small},
    aone/.style={
        circle,
        fill=blue!70!black,
        draw=white,
        line width=.4pt,
        inner sep=2.4pt
    },
    atwo/.style={
        circle,
        fill=orange!90!black,
        draw=white,
        line width=.4pt,
        inner sep=2.4pt
    },
    atwoball/.style={
        draw=orange!85!black,
        densely dashed,
        line width=.9pt
    },
    pathcore/.style={
        blue!58!black,
        line width=1.05pt,
        line cap=round
    },
    pathtube/.style={
        blue!45!black,
        line width=5.0pt,
        line cap=round,
        opacity=.14
    },
    forbidden/.style={
        draw=red!70!black,
        dashed,
        line width=.9pt
    },
    zset/.style={
        fill=red!70!black,
        draw=red!80!black,
        line width=.5pt
    },
    smalllabel/.style={font=\scriptsize},
    braceblue/.style={
        decorate,
        decoration={brace,amplitude=5pt},
        blue!70!black
    },
    braceorange/.style={
        decorate,
        decoration={brace,amplitude=5pt,mirror},
        orange!85!black
    }
]

\path[fill=gray!5,draw=black!35,line width=.9pt]
    plot[smooth cycle,tension=.85] coordinates {
        (0.00,0.00) (0.25,6.30) (2.70,7.00)
        (7.25,6.85) (10.80,6.15) (11.15,1.00)
        (8.70,-.35) (3.05,-.45)
    };

\node[font=\large] at (6.55,-0.95) {$G$};

\coordinate (a1) at (2.05,2.00);
\coordinate (a2) at (2.05,4.55);

\coordinate (b1) at (8.95,1.35);
\coordinate (b2) at (9.25,3.20);
\coordinate (b3) at (8.95,5.05);

\draw[forbidden] (5.70,3.15) ellipse (.90 and .62);
\filldraw[zset] (5.70,3.15) ellipse (.32 and .19);
\node[white,font=\scriptsize] at (5.70,3.15) {$Z$};

\node[red!70!black,smalllabel] at (5.70,4.02)
    {$N_{\delta/2}(Z)$};


\draw[pathtube] (a1) .. controls (3.25,.45) and (7.55,.30) .. (b1);
\draw[pathtube] (a1) .. controls (3.50,1.75) and (7.55,1.85) .. (b2);
\draw[pathtube] (a1) .. controls (3.05,5.85) and (7.35,5.95) .. (b3);

\draw[pathtube] (a2) .. controls (3.05,.55) and (7.35,.45) .. (b1);
\draw[pathtube] (a2) .. controls (3.50,4.55) and (7.55,4.45) .. (b2);
\draw[pathtube] (a2) .. controls (3.25,6.25) and (7.55,6.25) .. (b3);

\draw[pathcore] (a1) .. controls (3.25,.45) and (7.55,.30) .. (b1);
\draw[pathcore] (a1) .. controls (3.50,1.75) and (7.55,1.85) .. (b2);
\draw[pathcore] (a1) .. controls (3.05,5.85) and (7.35,5.95) .. (b3);

\draw[pathcore] (a2) .. controls (3.05,.55) and (7.35,.45) .. (b1);
\draw[pathcore] (a2) .. controls (3.50,4.55) and (7.55,4.45) .. (b2);
\draw[pathcore] (a2) .. controls (3.25,6.25) and (7.55,6.25) .. (b3);

\foreach \p in {b1,b2,b3}{
    \draw[atwoball] (\p) circle (.30);
}

\node[aone] at (a1) {};
\node[aone] at (a2) {};

\node[atwo] at (b1) {};
\node[atwo] at (b2) {};
\node[atwo] at (b3) {};

\node[smalllabel,blue!70!black] at ($(a1)+(-.15,-.30)$) {$x_1$};

\node[smalllabel,orange!90!black] at ($(b2)+(.22,0.45)$) {$x_2$};
\node[smalllabel,orange!90!black] at ($(b2)+(-2.22,-1.35)$) {$P^{x_1,x_2}$};

\node[smalllabel,blue!70!black] at (1.15,5.25) {$A_1$};
\node[smalllabel,orange!90!black] at (9.75,5.65) {$A_2$};

\draw[braceblue] ($(a1)+(-.42,0)$) -- ($(a2)+(-.42,0)$)
    node[midway,left=7pt,smalllabel]
    {$(1,Z)$-scattered};

\draw[braceorange] ($(b1)+(.82,0)$) -- ($(b3)+(.82,0)$)
    node[midway,right=7pt,smalllabel,align=left]
    {$(12\xi+5\delta,Z)$-scattered};


\end{tikzpicture}
    \caption{Two scattered sets $A_1$ and $A_2$ in \Cref{lem:biclique-fat-minor}. Each highlighted blue path has length at most $1/2+\xi$.}
    \label{fig:shallow-fat-2}
\end{figure}

\begin{lemma}\label{lem:biclique-fat-minor}
    Let $G$ be a length space, $Z \subseteq G$,
    and $0 < \delta \leq \xi \leq \frac{1}{10}$ be reals.
    Let $A_1,A_2 \subseteq G-Z$ be finite sets with the following properties:
    \begin{itemize}
    \item $A_1$ is $(1,Z)$-scattered and $|A_1|\geq 2$.
    \item $A_2$ is $(12\xi+5\delta,Z)$-scattered.
    \item For every $x_1 \in A_1$ and $x_2 \in A_2$, 
    there exists an $x_1$-$x_2$-path $P^{x_1,x_2}$ in $G$ of length at most $\tfrac{1}{2} + \xi$
    that does not contain any point within a distance less than $\delta/2$ from $Z$.
    \end{itemize}
    Then $G$ contains $K_{|A_1|,|A_2|}$ as a $(\tfrac{1}{2}+\xi,\xi+\delta/2)$-shallow $\delta$-fat minor.
\end{lemma}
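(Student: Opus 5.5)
The plan is to build the model directly from the paths $P^{x_1,x_2}$, in the spirit of \cref{lem:points-to-fat-minor}. I will cut every path into three consecutive pieces: an $x_2$-end, a middle, and an $x_1$-end. First I would do all distance estimates in $G-Z$. Every branch set will be a union of subpaths of the paths $P^{x_1,x_2}$, and none of these contains a point within distance $<\delta/2$ of $Z$. So at the end \cref{lem:dist-GZ-to-G} lifts every inequality $\dist_{G-Z}(\cdot,\cdot)\ge\delta$ to $G$, exactly as in the proof of \cref{lem:points-to-fat-minor}.

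Fix $c\coloneqq \xi+\delta/2$. Since $\dist_{G-Z}(x_1,x_2)>\frac12-\xi$, the path $P^{x_1,x_2}$ has length well above $c+2\xi+\delta$; this is needed so the cut points below exist. On $P^{x_1,x_2}$, mark the points at path-distance $c$ and $c+2\xi+\delta$ from $x_2$. I define:
\begin{itemize}
\item $\phi(x_1x_2)$ as the middle subpath between these two points; it has length $2\xi+\delta$, so its strong radius is at most $\xi+\delta/2$;
\item $\phi(x_2)$ as the union over $x_1\in A_1$ of the $x_2$-ends, which has radius at most $c$ around $x_2$;
\item $\phi(x_1)$ as the union over $x_2\in A_2$ of the $x_1$-ends, which has radius at most $\frac12+\xi$ around $x_1$.
\end{itemize}
Arc-connectivity of each set and the required incidences hold by construction.

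The separation estimates all rest on one observation. A point at path-distance $s$ from $x_2$ on $P^{x_1,x_2}$ is within distance $\frac12+\xi-s$ of $x_1$ and within distance $s$ of $x_2$.
\begin{itemize}
\item Two edges with distinct $A_1$-endpoints: the $(1,Z)$-scatteredness of $A_1$ gives distance $>s+s'-2\xi\ge 2c-2\xi=\delta$.
\item Two edges with distinct $A_2$-endpoints: the $(12\xi+5\delta,Z)$-scatteredness of $A_2$ gives distance $>12\xi+5\delta-2(c+2\xi+\delta)\ge\delta$. This is exactly where the constant $12\xi+5\delta$ is used.
\item Pairs $\phi(x_1),\phi(x_1')$: every point of $\phi(x_1)$ lies within $\frac12-2\xi-\tfrac32\delta$ of $x_1$, which leaves a gap of at least $4\xi+3\delta$.
\item Pairs $\phi(x_2),\phi(x_2')$: the gap is at least $12\xi+5\delta-2c$.
\item Non-incident vertex--edge pairs: the same kind of computations go through.
\end{itemize}

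I expect the main obstacle to be the vertex--vertex pairs $\phi(x_1),\phi(x_2)$, and similar pairs in which the two sets come from paths sharing the endpoint $x_1$. There the triangle inequality through the shared endpoint gives no lower bound: the $x_1$-end of $P^{x_1,x_2'}$ might pass close to $x_2$. Here I would use the hypothesis $|A_1|\ge 2$. Pick $x_1'\neq x_1$ and compare a point $p$ near $x_2$ with a point $q$ on an $x_1$-end. The point $p$ is within $\frac12+\xi+c$ of $x_1'$ via $P^{x_1',x_2}$, while $q$ lies near $x_1$ and hence far from $x_1'$. If that bound is too weak, my fallback is to trim the $x_1$-ends by an extra margin of order $\xi$. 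The stated radius $\frac12+\xi$ leaves slack for this, and it is a parameter choice that has to be tuned together with $c$.
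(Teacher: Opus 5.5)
Your construction works and uses a genuinely different parametrization from the paper's. Two spots need repair.

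\emph{First fix: the second cut point may not exist.} You claim $P^{x_1,x_2}$ is long enough to carry the mark at arc-length $c+2\xi+\delta=3\xi+\frac32\delta$ from $x_2$. This fails in part of the allowed range: you only know the length exceeds $\frac12-\xi$, and $\frac12-\xi<3\xi+\frac32\delta$ can happen (e.g.\ $\xi=\delta=\frac1{10}$). The fix is to clamp. On such a short path, let the middle piece run from the first mark all the way to $x_1$, and let the $x_1$-end be $\{x_1\}$. Every containment you use survives: middle pieces still have length at most $2\xi+\delta$ and lie in $\Ball_{G-Z}(x_1,\frac12-\frac\delta2)\cap\Ball_{G-Z}(x_2,3\xi+\frac32\delta)$, and $x_1$-ends lie in $\Ball_{G-Z}(x_1,\frac12-2\xi-\frac32\delta)$.

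\emph{Second fix: the ``main obstacle'' is already closed.} The triangle inequality you propose settles it. We have $\dist_{G-Z}(x_1,x_2)>1-(\frac12+\xi)$ via any $x_1'\neq x_1$. Also $\phi(x_1)\subseteq\Ball_{G-Z}(x_1,\frac12-2\xi-\frac32\delta)$, regardless of which paths its pieces come from, and $\phi(x_2)\subseteq\Ball_{G-Z}(x_2,\xi+\frac\delta2)$. Hence $\dist_{G-Z}(\phi(x_1),\phi(x_2))>(\frac12-\xi)-(\frac12-2\xi-\frac32\delta)-(\xi+\frac\delta2)=\delta$. In particular, no $x_1$-end can come near $x_2$. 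This separation equals the length of the middle piece minus the slack $2\xi$ of the paths, wherever the middle piece sits. So no trimming is needed, and none could help without lengthening the edge pieces beyond radius $\xi+\frac\delta2$. Drop the fallback.

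\emph{Comparison with the paper.} The paper cuts $P^{x_1,x_2}$ by $G-Z$-distance from $x_1$: the first point at distance $\frac12-3\xi-2\delta$ and the last point at distance $\frac12-3\xi-\delta$. This builds an annulus of width $\delta$ around $x_1$ into the construction, which separates $x_1$-pieces from $x_2$-pieces of paths leaving the same $x_1$; see \eqref{eq:delta-gap}. The price is looser control on the $x_2$ side: $x_2$-pieces may have length up to $4\xi+\delta$, and edge pieces may reach distance $6\xi+2\delta$ from $x_2$. That is what forces $(12\xi+5\delta,Z)$-scatteredness of $A_2$, and why the plain ball argument above would not work in the paper's parametrization.

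Your arc-length cut from $x_2$ pins the $x_2$-end at length exactly $\xi+\frac\delta2$. It pushes all the slack of the non-geodesic paths onto the $x_1$ side, where the $(1,Z)$-scatteredness absorbs it. Every fatness check then becomes a ball containment plus one triangle inequality, with no first/last-point bookkeeping. In fact $(6\xi+4\delta,Z)$-scatteredness of $A_2$ would already suffice. Your edge--edge bound for distinct $A_2$-endpoints is $6\xi+2\delta$, well above $\delta$, so $12\xi+5\delta$ is not used tightly there.

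The remaining ingredients coincide with the paper's: the shallowness bounds, the incidences, and the final lift from $G-Z$ to $G$ via \cref{lem:dist-GZ-to-G}.
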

\begin{proof} See \Cref{fig:shallow-fat-2} for an illustration.  Let $x_1 \in A_1$ and $x_2 \in A_2$ be arbitrary.
    Note that since every path $P^{\cdot,\cdot}$ is disjoint with $Z$ and
    of length at most $1/2+\xi$, while $A_1$ is $(1,Z)$-scattered and contains at least one point other that $x_1$, we have
    $\dist_{G-Z}(x_1,x_2) > 1 - (1/2+\xi) =  1/2-\xi$.
    In particular, the path $P^{x_1,x_2}$ is of length more than $1/2-\xi$. 

    Recall that $P^{x_1,x_2}$ is not required to be a shortest path.
    Pick on $P^{x_1,x_2}$ two points $p^{x_1,x_2}_1$ and $p^{x_1,x_2}_2$ as follows:
    \begin{itemize}
    \item $p^{x_1,x_2}_1$ is the first point encountered on $P^{x_1,x_2}$ when traversing from $x_1$ to $x_2$ that satisfies
    \[ \dist_{G-Z}(x_1,p^{x_1,x_2}_1) = \tfrac{1}{2} - 3\xi-2\delta. \]
    \item $p^{x_1,x_2}_2$ is the last point encountered on $P^{x_1,x_2}$ when traversing from $x_1$ to $x_2$ that
    satisfies
    \[ \dist_{G-Z}(x_1,p^{x_1,x_2}_2) = \tfrac{1}{2}-3\xi-\delta. \]
    \end{itemize}
    Such points always exist, because $\dist_{G-Z}(x_1,x_2)>\tfrac{1}{2}-\xi$ and $\delta \leq \xi \leq \frac{1}{10}$ ensures that $\tfrac{1}{2}-3\xi-2\delta\geq 0$.
    
    By the construction, points $x_1$, $p^{x_1,x_2}_1$, $p^{x_1,x_2}_2$, $x_2$ lie on $P^{x_1,x_2}$ in this order.
    Thus, points $p^{x_1,x_2}_1$ and $p^{x_1,x_2}_2$ split $P^{x_1,x_2}$ into three paths $P^{x_1,x_2}_1$, $P^{x_1,x_2}_E$, and $P^{x_1,x_2}_2$, ordered in this way when traversing $P^{x_1,x_2}$ from $x_1$ to $x_2$,
    see Figure~\ref{fig:split-pxx}.

    \begin{figure}[h]
    \centering{\includegraphics[scale=1]{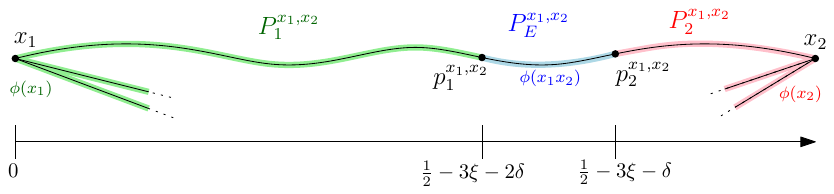}\caption{Splitting $P^{x_1,x_2}$ into subpaths $P_1^{x_1,x_2}$, $P_E^{x_1,x_2}$, and $P_2^{x_1,x_2}$.}\label{fig:split-pxx}}
    \end{figure}
    
    By the choice of $p^{x_1,x_2}_1$, all points of $P_1^{x_1,x_2}$ are within
    distance $\tfrac{1}{2}-3\xi-2\delta$ from $x_1$ in $G-Z$, that is, 
    \begin{equation}\label{eq:wiewiorka}P_1^{x_1,x_2}\subseteq \Ball_{G-Z}(x_1,\tfrac{1}{2}-3\xi-2\delta).
    \end{equation}
    By the choice of $p^{x_1,x_2}_2$, no vertex of $P_1^{x_1,x_2}$ lies within
    distance less than $\frac{1}{2}-3\xi-\delta$ of $x_1$. 
    Hence, for every $x_1 \in A_1$ and every (not necessarily distinct) $x_2,x_2' \in A_2$, we have
    \begin{equation}\label{eq:delta-gap}
        \dist_{G-Z}(P_1^{x_1,x_2}, P_2^{x_1,x_2'}) \geq \delta.
    \end{equation}
    Again by the choice of $p^{x_1,x_2}_2$ together with $\dist_{G-Z}(x_1,x_2)>\tfrac{1}{2}-\xi$ implies that the length of $P^{x_1,x_2}_2$ is larger than $(\tfrac{1}{2}-\xi)-(\tfrac{1}{2}-3\xi-\delta)=2\xi+\delta$. On the other hand, the length of $P_{x_1,x_2}$ is at most $\frac{1}{2}+\xi$, so the length of $P^{x_1,x_2}_2$ is at most $(\tfrac{1}{2}+\xi)-(\tfrac{1}{2}-3\xi-\delta)=4\xi+\delta$, implying that
    \begin{equation}\label{eq:siebenschlaefer}
    P_2^{x_1,x_2}\subseteq \Ball_{G-Z}(x_2,4\xi+\delta).
    \end{equation}
    Finally,
    the length of $P^{x_1,x_2}_E$ is at most 
    \[ 
    \len(P^{x_1,x_2}_E)
    \leq
    \underbrace{(\tfrac{1}{2}+\xi)}
    _{\geq \len(P^{x_1,x_2})}
    - 
    \underbrace{(\tfrac{1}{2} - 3\xi-2\delta)}
    _{\leq \len (P^{x_1,x_2}_1)} 
    -
    \underbrace{ 
    (2\xi+\delta)}
    _{\leq \len (P^{x_1,x_2}_2)}
    =
    2\xi+\delta. \]

    Let $H$ be the biclique with sides $A_1$ and $A_2$. 
    For every $x_1 \in A_1$ and $x_2 \in A_2$, define \[\phi(x_1x_2) \coloneqq P^{x_1,x_2}_E.\]
    Note that this is a path of length at most $2\xi+\delta$,
    so of strong radius at most $\xi+\delta/2$.
    For any $x_1 \in A_1$ and $x_2 \in A_2$, define
    \[\phi(x_1) = \bigcup_{y \in A_{2}} P^{x_1,y}_1,
    \qquad
    \phi(x_2) = \bigcup_{y \in A_{1}} P^{y,x_2}_2.
    \]
    Note that since $P^{x_1,y}_1$, as a subpath of $P^{x_1,y}$, has length at most $\frac{1}{2}+\xi$, hence $\phi(x_1)$ has strong radius at most $\frac{1}{2}+\xi$, with $x_1$ being the witnessing center.
    Analogously, this also holds for $\phi(x_2)$. 
    
    We claim that $\phi$ is the desired model of $H$ in $G$.
    Clearly, $\phi(x_1x_2)$ connects a point of $\phi(x_1)$ with a point of $\phi(x_2)$.
    The promised strong radius bounds for shallowness have already been established. It remains
    to verify fatness.

    By \eqref{eq:wiewiorka} and \eqref{eq:siebenschlaefer}, we have for every $x_1 \in A_1$ and $x_2 \in A_2$:
    \begin{align} 
        \phi(x_1) &\subseteq \mathrm{Ball}_{G-Z}(x_1, \tfrac{1}{2} - 3\xi-2\delta),\label{eq:bfm:1}\\
        \phi(x_2) &\subseteq \mathrm{Ball}_{G-Z}(x_2, 4\xi+\delta).\label{eq:bfm:4}
    \end{align}
    Since the length of $P_E^{x_1,x_2}=\phi(x_1x_2)$ is at most $2\xi+\delta$, this implies
    \begin{align}
        \phi(x_1x_2) &\subseteq 
        \Ball_{G-Z}(p^{x_1,x_2}_1, 2\xi + \delta)
        \subseteq
        \mathrm{Ball}_{G-Z}(x_1,\tfrac{1}{2}-\xi-\delta),\label{eq:bfm:2}\\
        \phi(x_1x_2) &
        \subseteq 
        \Ball_{G-Z}(p^{x_1,x_2}_2, 2\xi + \delta)\subseteq \mathrm{Ball}_{G-Z}(x_2,6\xi+2\delta).\label{eq:bfm:3}
    \end{align}
    As $A_1$ is $(1,Z)$-scattered, from~\eqref{eq:bfm:1} we have that for all distinct
    $x_1,x_1' \in A_1$,
    \begin{equation}\label{eq:bfm:11}
    \dist_{G-Z}(\phi(x_1), \phi(x_1')) > 1 - 2(\tfrac{1}{2}-3\xi-2\delta) =  6\xi+4\delta >\delta. 
    \end{equation}
    Similarly, as $A_2$ is $(12\xi+5\delta,Z)$-scattered, by~\eqref{eq:bfm:4} we have for all distinct $x_2,x_2' \in A_2$, 
    \begin{equation}\label{eq:bfm:22}
        \dist_{G-Z}(\phi(x_2), \phi(x_2')) >
        (12\xi+5\delta) - 2(4\xi + \delta)
        =
        4\xi+3\delta > \delta.
    \end{equation}
    As $A_1$ is $(1,Z)$-scattered and every path $P^{\cdot,\cdot}$ is disjoint with $Z$
    and of length at most $\tfrac{1}{2}+\xi$, from \eqref{eq:bfm:1} and \eqref{eq:bfm:2}
    we have that for all distinct $x_1,x_1' \in A_1$ 
    and all (not necessarily distinct) $x_2, x_2' \in A_2$,
    \begin{equation}\label{eq:bfm:dist}
        \dist_{G-Z}(\phi(x_1) \cup \phi(x_1x_2), P^{x_1',x_2'}) > 1-(\tfrac{1}{2}-\xi-\delta)-(\tfrac{1}{2}+\xi)=\delta.
    \end{equation}
    Then we have that for all $x_1\in A_1$ and $x_2 \in A_2$
    \begin{equation}\label{eq:bfm:12}
    \begin{split}
      \dist_{G-Z}(\phi(x_1),\phi(x_2))
        &= \dist_{G-Z}\left(\bigcup_{y \in A_2} P^{x_1,y}_1, \bigcup_{y \in A_1} P^{y,x_2}_2\right) \\
        &= \min_{x_1' \in A_1,\, x_2' \in A_2} \dist_{G-Z}\left(P^{x_1,x_2'}_1, P^{x_1',x_2}_2\right) \\
        &\geq \delta,
    \end{split}
    \end{equation}
    where we conclude $\dist_{G-Z}(P^{x_1,x_2'}_1, P^{x_1',x_2}_2) \geq \delta$ from \eqref{eq:bfm:dist} if $x_1 \neq x_1'$ and from \eqref{eq:delta-gap} if $x_1 = x_1'$.
    
    Let $x_1,x_1' \in A_1$ and $x_2,x_2' \in A_2$.
    If $x_1 \neq x_1'$, then by~\eqref{eq:bfm:dist}
    we have $\dist_{G-Z}(\phi(x_1x_2),\phi(x_1') \cup \phi(x_1'x_2')) > \delta$. 
    On the other hand, if $x_2 \neq x_2'$, then,
    as $A_2$ is $(12\xi+5\delta,Z)$-scattered, by~\eqref{eq:bfm:4} and~\eqref{eq:bfm:3}
    we have \[\dist_{G-Z}(\phi(x_1x_2),\phi(x_2') \cup \phi(x_1'x_2')) > (12\xi+5\delta)-(6\xi+2\delta)-(6\xi+2\delta)=\delta.\]
    These inequalities together imply that if $o_1,o_2\in V(H)\cup E(H)$ are two distinct edges, or a vertex and an edge non-incident to it, then $\dist_{G-Z}(\phi(o_1),\phi(o_2)\geq \delta$. Together with \eqref{eq:bfm:11}, \eqref{eq:bfm:22}, and~\eqref{eq:bfm:12}, this establishes that the model $\phi$ is $\delta$-fat in $G-Z$.

    Finally, note that all the sets $\phi(\cdot)$ are built from parts of paths
    $P^{\cdot,\cdot}$, and hence are within distance at least $\delta/2$ from $Z$.
    So \cref{lem:dist-GZ-to-G} applies and the lower bounds on the distances
    between different sets $\phi(\cdot)$ obtained above hold in $G$, not only in $G-Z$. So $\phi$ is $\delta$-fat in $G$ as well, and this finishes the proof.
\end{proof}

\section{Drill-flatness in the absence of a fat minor}\label{sec:flatness-metric}

In this section we prove the main result of this part, \cref{thm:metricuqwnew}. As explained in \cref{sec:intro}, the proof is a careful lift of the classic reasoning from the non-metric setting, of the equivalence of nowhere denseness and flatness. The main point of inspiration is the proof due to Pilipczuk, Siebertz, and Toru\'nczyk~\cite{DBLP:conf/lics/PilipczukST18a} that provided explicit, polynomial bounds for flatness.

Specifically, in \cref{sec:improve} we present a procedure to increase the scatteredness of a given set $A$ a bit, at the cost of restricting $A$ to its sizable subset and drilling a bounded number of ``holes'' in the given length space. Then, in \cref{sec:iterate} we show that this procedure can be iterated any bounded number of times, which amounts to the proof of \cref{thm:metricuqwnew}.

\subsection{Improving the scatteredness}\label{sec:improve}

In this section we show a procedure that improves the scatteredness of a given set by a small multiplicative factor. We first show the following technical lemma.

\begin{lemma}\label{lem:unit-step}
    Let $h\geq 2$ be an integer, $G$ be a length space, $Z \subseteq G$ 
    and $\delta$, $\varepsilon$ be reals satisfying $0<\delta \leq \varepsilon\leq \tfrac{1}{8}$.
    Assume $A \subseteq G$ is a finite $(1,Z)$-scattered set of size at least $4^5(h+1)^{10}$.
    For $a\in \N$, let $Z^a \coloneqq \Ball_G(Z, a\delta/2)$. 
    Then, one of the following cases holds:
    \begin{itemize}
    \item There exists $v \in G-Z^1$ such 
     \[ \left| A \cap \Ball_{G-Z^1}(v, \tfrac{1}{2} + 5\varepsilon + 3\delta) \right| > |A|^{\tfrac{1}{10}}. \]
    \item There exists $B \subseteq A$ of size at least $|A|^{\tfrac{1}{10}}$ that is $(1+\varepsilon,Z^2)$-scattered.
    \item $G$ contains a $(\tfrac{3}{2}+\varepsilon-\delta,2\varepsilon+\delta)$-shallow $\delta$-fat minor model of $K_h$.
    \end{itemize}
\end{lemma}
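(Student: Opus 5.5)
We follow the scheme of Pilipczuk, Siebertz, and Toru\'nczyk, replacing their branching-index argument by an extremal one based on \cref{thm:alon-density}. Assume the first outcome fails: every ball $\Ball_{G-Z^1}(v,\tfrac12+5\eps+3\delta)$ with $v\in G-Z^1$ contains at most $|A|^{1/10}$ points of $A$. We aim for the second or third outcome.

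\textbf{Step 1: the conflict graph.} Call two points $a,a'\in A$ \emph{close} if $\dist_{G-Z^2}(a,a')\le 1+\eps$, and let $\Gamma$ be the graph on $A$ whose edges are the close pairs. An independent set of size $|A|^{1/10}$ in $\Gamma$ is exactly the second outcome. So we assume $\Gamma$ has no such independent set. By Tur\'an's theorem, $\Gamma$ then has average degree at least roughly $|A|^{9/10}$. Hence we can find many close pairs; we will later pass to a subgraph with more than $\binom{h+1}{2}n^{3/2}$ edges on $n$ vertices. The lower bound $|A|\ge 4^5(h+1)^{10}$ is what makes this threshold attainable.

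\textbf{Step 2: midpoints.} For each close pair $aa'$, fix an $a$-$a'$ path in $G-Z^2$ of length at most $1+\eps+\eta$ for a tiny $\eta$, and let $m_{aa'}$ be its midpoint. Each half of the path has length at most $\tfrac12+\eps$ (absorbing $\eta$ into the slack). Every point of the path lies outside $Z^2$, so it is at distance at least $\delta/2$ from $Z^1$, exactly as \cref{lem:biclique-fat-minor,lem:points-to-fat-minor} require for the reference set $Z^1$.

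\textbf{Step 3: using that balls are small.} Since the ball around $m_{aa'}$ of radius about $\tfrac12+5\eps+3\delta$ contains at most $|A|^{1/10}$ points of $A$, greedily select a large family of close pairs whose midpoints are pairwise more than $4\eps+3\delta$ apart in $G-Z^1$. When we pick a midpoint $m$, we discard all pairs whose midpoint lies within $4\eps+3\delta$ of $m$. Any such pair has both endpoints inside $\Ball_{G-Z^1}(m,\tfrac12+5\eps+3\delta)$, so the number of discarded pairs is at most $(|A|^{1/10})^2$. Because $\Gamma$ has about $|A|^{19/10}$ edges, we keep about $|A|^{17/10}$ pairs, whose midpoints are pairwise $(4\eps+3\delta,Z^1)$-separated.

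This surviving graph has $n=|A|$ vertices and about $n^{17/10}>\binom{h+1}{2}n^{3/2}$ edges. \cref{thm:alon-density} therefore gives a $1$-shallow minor of $K_h$ in it. We may pass to a subgraph $H$ of this graph that is a $1$-subdivision-like structure: stars around the centers, plus connecting edges, with no isolated vertices.

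\textbf{Step 4: building the fat model.} Rescale and apply \cref{lem:points-to-fat-minor} to $H$, with $Z^1$ in place of $Z$ and $\xi=\eps$. The map $f$ sends vertices to points of $A$ and edges to the chosen midpoints. The three hypotheses hold as follows: distinct points of $A$ are at distance more than $1$ in $G-Z^1$, because $A$ is $(1,Z)$-scattered; the midpoints are pairwise separated by Step 3; and the half-paths give the required vertex--edge paths. This yields a $(\tfrac12-\eps-\delta,2\eps+\delta)$-shallow $\delta$-fat model of $H$. Since $H$ contains $K_h$ as a $1$-shallow minor, \cref{lem:minor-of-fat-minor} with $r=1$ upgrades it to a $(\tfrac32+\ldots,\,2\eps+\delta)$-shallow model of $K_h$.

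The main obstacle is the radius arithmetic. The statement claims vertex radius $\tfrac32+\eps-\delta$, while the naive bound from \cref{lem:minor-of-fat-minor} with $r=1$ is $3(\tfrac12-\eps-\delta)+2(2\eps+\delta)=\tfrac32+\eps-\delta$. This matches exactly, so it should go through. The remaining delicate point is the counting in Step 3, which must keep the density above the threshold of \cref{thm:alon-density}. The exponents $1/10$ were chosen with that in mind, so we expect it to close with the stated size bound on $|A|$.
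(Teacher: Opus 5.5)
Your proposal is correct and follows the paper's proof essentially step for step. The shared steps are the conflict graph on $A$ measured in $G-Z^2$, midpoints of near-shortest paths, and failure of the first outcome bounding the conflicting pairs around each midpoint by $\binom{|A|^{1/10}}{2}$; your greedy selection is the paper's independent set in the midpoint graph $K$. The proof then finishes with \cref{thm:alon-density}, \cref{lem:points-to-fat-minor} applied with $Z^1$ and $\xi=\eps$, and \cref{lem:minor-of-fat-minor}. The counting you left as ``expected'' does close: Tur\'an gives $|E(\Gamma)|>\tfrac14|A|^{19/10}$ because $|A|\ge 2^{20}$, so the greedy keeps more than $\tfrac14|A|^{17/10}\ge (h+1)^2|A|^{3/2}>\binom{h+1}{2}|A|^{3/2}$ pairs (and no rescaling is needed in Step 4).
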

\begin{proof}
    We assume that the first two outcomes do not occur, and we aim to construct a model  of $K_h$ witnessing the third outcome.

    Let $H$ be an auxiliary graph with vertex set $A$ where any distinct $x,y \in A-Z^2$ are adjacent
    whenever 
    $\dist_{G-Z^2}(x,y)\leq 1+\varepsilon$.
    (Thus, vertices of $A\cap Z^2$ are isolated in $H$.)
    We claim that 
    \begin{equation}\label{eq:unit:EH-large}
        |E(H)| > \tfrac{1}{16}|A|^{\tfrac{19}{10}}.
    \end{equation}
    Indeed, otherwise the sum of the degrees in $H$ would be at most $\tfrac{1}{8}|A|^{\tfrac{19}{10}}$, implying that
    at least $|A|/2$ vertices of $H$ have degree at most $\frac{1}{4}|A|^{\tfrac{9}{10}}$.
    Note that $\frac{1}{4}|A|^{\tfrac{9}{10}}\leq \frac{1}{2}|A|^{\tfrac{9}{10}} - 1$, as $|A| \geq 4^5(h+1)^{10} \geq 2^{20}$.
    Consequently, $H$ contains an independent set of size at least
    \[ \frac{|A|/2}{\frac{1}{2} |A|^{\tfrac{9}{10}} - 1 + 1} = |A|^{\tfrac{1}{10}}. \]
    Such an independent set would be the second outcome of the lemma. This proves~\eqref{eq:unit:EH-large}.

    For any edge $e \in E(H)$, we may fix a path $P^e$ in $G-Z^2$ of length\footnote{The extra $\varepsilon$ in the bound comes from the fact that we work with length spaces, so for $\dist(x,y) = 1+\varepsilon$ we may not have an $x$-$y$ path of length exactly $1+\varepsilon$, but only of any length larger than $1+\varepsilon$.}
     at most $1+2\varepsilon$
    that connects the endpoints of $e$. 
    Let $m_e$ be the midpoint of $P^e$.
    Consider the multiset $M \coloneqq \{m_e~\colon~e \in E(H)\}$; then $|M| = |E(H)|$.
    Let $K$ be an auxiliary graph with vertex set $M$ where two midpoints $m_e$ and $m_f$ are adjacent whenever $\dist_{G-Z^1}(m_e,m_f) \leq 4\varepsilon+3\delta$. 

    We claim that the maximum degree of $K$ cannot be too large.
    Let $m \in M$ be a vertex of $K$, say of degree $k$. 
    Let $B \subseteq A$ be defined as follows:
    \[
        B \coloneqq \bigcup_{m_e \in N_K[m]} \{v \in A~\mid~ \text{$v$ is an endpoint of $e$}\}.
    \]
    That is, $B$ consists of the endpoints of those edges $e$ that satisfy $m_e \in N_K[m]$. 
    As every point in $M$ is a midpoint of a path of length at most $1+2\varepsilon$, we have
    \[ B \subseteq \Ball_{G-Z^1}(m, \tfrac{1}{2} + 5\varepsilon + 3\delta). \]
    Hence, noting the $B \cap Z^1 = \emptyset$, unless the first outcome occurs we have $|B| \leq |A|^{\tfrac{1}{10}}$. 
    On the other hand, all the edges of $H$ whose midpoints belong to $N_K[m]$ have both endpoints in $B$, hence
    \[ k + 1 = |N_K[m]| \leq \binom{|B|}{2}. \]
    We infer that
    \[ k + 1\leq \tfrac{1}{2}|A|^{\tfrac{1}{5}} .\]
    Since the choice of $m$ is arbitrary, by \eqref{eq:unit:EH-large} we conclude that $K$ contains an independent set $J$ of size 
    \[|J| \geq \frac{|M|}{\frac{1}{2}|A|^{\tfrac{1}{5}}} = \frac{2|E(H)|}{|A|^{\tfrac{1}{5}}} \geq \tfrac{1}{8}|A|^{\tfrac{17}{10}}. \]
    Since $|A| \geq 4^5(h+1)^{10}$, we have
    \begin{equation}\label{eq:unit:J}
     |J| \geq  \tfrac{1}{8}|A|^{\tfrac{1}{5}}|A|^{\tfrac{3}{2}}\geq \tfrac{1}{8}\cdot  4(h+1)^2 |A|^{\tfrac{3}{2}} > \binom{h+1}{2} |A|^{\tfrac{3}{2}}. 
    \end{equation}
    
    Let $H'$ be the subgraph of $H$ consisting of edges $\{e\in E(H)~\mid~m_e \in J\}$ and vertices incident to them. 
    By~\eqref{eq:unit:J} and \cref{thm:alon-density}, $H'$ contains $K_h$ as $1$-shallow minor. 
    
    Let $f(x) \coloneqq x$ for every $x \in V(H')$ and $f(e) \coloneqq m_e$ for every $e \in E(H') = J$. 
    We verify that $f$ satisfies the prerequisites of \cref{lem:points-to-fat-minor}
    for $H'$, $G$, $Z^1$, $\delta$, and $\xi \coloneqq \varepsilon$.

    \begin{itemize}
    \item For every two distinct vertices $u,v \in V(H') \subseteq A$, we have \[\dist_{G-Z^1}(f(u) = u,f(v) = v) > 1,\]
    
    \begin{itemize}
        \item This is because we assumed $A$ to be $(1,Z)$-scattered, hence also $(1,Z^1)$-scattered.
    \end{itemize}

    \item For every two distinct edges $e,e' \in E(H')$, we have 
    \[
        \dist_{G-Z^1}(f(e) = m_{e}, f(e') = m_{e'}) > 4\eps + 3\delta.    
    \]
    \begin{itemize}
        \item This is because $J \supseteq \{m_{e},m_{e'}\}$ is an  independent set in $K$.
    \end{itemize}

    \item For every $e \in E(H')$ and an endpoint $v$ of $e$, 
    there exists  in $G$ an $f(e)$-$f(v)$-path $P_{e,v}$
    of length at most $\frac{1}{2}+\eps$ that does not contain any point within distance
    less than $\delta/2$ from $Z^1$.

    \begin{itemize}
        \item This is because $f(e) = m_e$ is the midpoint of the path $P^e$ of length at most $1+2\eps$ which has $f(v) = v$ as an endpoint. 
    This path is contained in $G - Z^2$, so it contains no point within distance
    less than $\delta/2$ from $Z^1$.
    \end{itemize}
    
    \end{itemize}

    Consequently, by \cref{lem:points-to-fat-minor}, $G$ contains $H'$ as a $(\tfrac{1}{2}-\varepsilon-\delta,2\varepsilon+\delta)$-shallow
    $\delta$-fat minor. 
    As $H'$ contains $K_h$ as $1$-shallow minor, by \cref{lem:minor-of-fat-minor} we infer that
    $G$ contains $K_h$ as a $(\tfrac{3}{2}+\varepsilon-\delta,2\varepsilon+\delta)$-shallow $\delta$-fat minor.
    This is the third outcome, so the proof of the lemma is complete.
\end{proof}

In the next lemma, we iteratively apply \cref{lem:unit-step} and, in case of obtaining the first outcome,
insert into $Z$ a small ball around the output vertex $v$. We show that this process cannot continue for too many
steps, as otherwise we would be able to construct a fat minor of a large biclique using \cref{lem:biclique-fat-minor}.

\begin{restatable}{lemma}{smallstep}\label{lem:small-step}
    Let $h, m$ be positive integers with $h\geq 2$, $G$ be a length space, $Z \subseteq G$, and
    $\delta, \varepsilon$ be reals such that
    $0<\delta \leq \varepsilon\leq \tfrac{1}{200}$.
    Assume that $G$ does not contain $K_h$ as a $(2,10\varepsilon+\delta/2)$-shallow $\delta$-fat minor.
    Let $A \subseteq G$ be a finite $(1,Z)$-scattered set of size at least
    \begin{equation}\label{eq:small:A}
        |A| \geq \left(\max\left(4^5 (h+1)^{10}, m\right)\right)^{10^h}.
    \end{equation}
    Then, there exists a set $X \subseteq G$ of size at most $h-1$
    and a set $B \subseteq A$ of size at least $m$
    that is $(1+\varepsilon, \Ball_G(Z,\delta) \cup \Ball_G(X, 120\varepsilon+6\delta))$-scattered in $G$.
\end{restatable}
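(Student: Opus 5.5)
We iterate \cref{lem:unit-step}, maintaining a current set $A_i$, a drilled set $Z_i$, and centers $X_i=\{v_1,\dots,v_i\}$. Start with $A_0=A$, $Z_0=Z$, $X_0=\emptyset$. In round $i$ we apply \cref{lem:unit-step} to $A_i$ with holes $Z_i$, where $Z_i$ is $Z$ together with small balls around $v_1,\dots,v_i$, suitably enlarged. We use the same $\delta$ and $\varepsilon$ as in the statement. The third outcome of \cref{lem:unit-step} gives a $(\tfrac32+\varepsilon-\delta,2\varepsilon+\delta)$-shallow $\delta$-fat model of $K_h$. This is forbidden, since $\tfrac32+\varepsilon\le 2$ and $2\varepsilon+\delta\le 10\varepsilon+\delta/2$.

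The second outcome gives a $(1+\varepsilon,Z_i^2)$-scattered $B$ of size at least $|A_i|^{1/10}$, and we stop there. The first outcome gives $v_{i+1}\in G-Z_i^1$ whose ball of radius $\tfrac12+5\varepsilon+3\delta$ in $G-Z_i^1$ contains more than $|A_i|^{1/10}$ points of $A_i$. Let $A_{i+1}$ be these points. We set $Z_{i+1}$ to be $Z_i^1$ (the enlargement by $\delta/2$, as in the lemma) plus a ball of radius roughly $20\varepsilon$ around $v_{i+1}$.

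The radii must be bookkept so that $A_{i+1}$ is still $(1,Z_{i+1})$-scattered, or at least that the points within the new ball are discarded. Because $A_i$ is $(1,Z_i)$-scattered, at most one point of $A_i$ lies within distance $<\tfrac12$ of $v_{i+1}$, so removing it costs only one element. Each round enlarges the old holes by $\delta/2$, so after at most $h$ rounds the $Z$-part grows by at most $h\delta/2$. This has to stay within $\Ball_G(Z,\delta)$, which forces the enlargement per round to be chosen more carefully. We would use scale $\delta/(2h)$-type increments, or we note that the required output only needs $Z^{2}$ at the final step. The ball radii around the $v_j$ likewise accumulate, by about $6\varepsilon$ per round, reaching $120\varepsilon+6\delta$ after $h$ rounds.

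The sizes satisfy $|A_{i}|\ge |A|^{10^{-i}}-i$. After $h-1$ rounds this is still at least $\max(4^5(h+1)^{10},m)$ by \eqref{eq:small:A}, so \cref{lem:unit-step} stays applicable throughout.

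The key claim is that the first outcome cannot occur $h$ times. Suppose we have obtained $v_1,\dots,v_h$ and a final set $A_h$ with $|A_h|\ge 2$. Every $x\in A_h$ lies within distance $\tfrac12+5\varepsilon+3\delta$ of every $v_j$, via paths avoiding the earlier holes, since the sets are nested. We rescale and apply \cref{lem:biclique-fat-minor} with $A_1:=A_h$, which is $(1,Z')$-scattered, and $A_2:=\{v_1,\dots,v_h\}$, with $\xi\approx 5\varepsilon+3\delta$. The $v_j$ must be $(12\xi+5\delta,Z')$-scattered. This holds because $v_{j'}$ for $j'>j$ lies outside the ball drilled around $v_j$, whose radius we choose larger than $12\xi+5\delta\approx 60\varepsilon+41\delta$. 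The paths must stay $\delta/2$ away from $Z'$, which is exactly why we pass from $Z_i$ to $Z_i^1$ before drilling.

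This yields $K_{2,h}$, or better $K_{|A_h|,h}$, as a $(\tfrac12+\xi,\xi+\delta/2)$-shallow $\delta$-fat minor. Taking $|A_h|\ge h$, $K_{h,h}$ contains $K_h$ as a $1$-shallow minor, by contracting a matching. \Cref{lem:minor-of-fat-minor} then gives a $(3(\tfrac12+\xi)+2(\xi+\delta/2),\,\xi+\delta/2)$-shallow model. This violates the hypothesis only if it is at most $(2,10\varepsilon+\delta/2)$-shallow, which needs $\xi$ and $\varepsilon\le 1/200$ to be small. Those bounds are exactly what the constants in the statement suggest.

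Consequently some round $i\le h-1$ produces the second outcome. We output $X=X_i$ and $B$. Being $(1+\varepsilon,Z_i^2)$-scattered implies the required scatteredness, since $Z_i^2\subseteq \Ball(Z,\delta)\cup\Ball(X,120\varepsilon+6\delta)$.

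The main obstacle is the radius bookkeeping, and it has three parts. First, the drilled balls around $v_j$ must be large enough to make $\{v_j\}$ $(12\xi+5\delta)$-scattered. Second, they must be small enough that points of $A_{i+1}$ are not swallowed. Third, the repeated $\delta/2$-enlargements of $Z$ must stay within $\Ball(Z,\delta)$. I would first try using $Z_{i+1}:=Z_i\cup\Ball(v_{i+1},\cdot)$ without re-enlarging. Only in the biclique construction would I pass to a $\delta/2$-thickening, checking that paths in $G-Z_i^1$ avoid it.
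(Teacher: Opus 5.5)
Your overall plan matches the paper's: iterate \cref{lem:unit-step}, drill a ball around each first-outcome centre $v_{i+1}$, and use \cref{lem:biclique-fat-minor}, contraction of a matching of $K_{h,h}$, and \cref{lem:minor-of-fat-minor} to show the first outcome happens fewer than $h$ times. The gap is the radius bookkeeping. You call it the main obstacle but never resolve it, and the version you actually write down does not work.

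\begin{itemize}
\item With $Z_{i+1}:=Z_i^1\cup\Ball_G(v_{i+1},\approx 20\varepsilon)$, $Z$ is thickened by $\delta/2$ every round, so the final $Z_i^2$ contains $\Ball_G(Z,(i+2)\delta/2)$.
\item Scatteredness with respect to a larger drilled set does not transfer to a smaller one. So you cannot conclude scatteredness with respect to $\Ball_G(Z,\delta)\cup\Ball_G(X,120\varepsilon+6\delta)$.
\item For the same reason the balls around $v_1,v_2,\dots$ grow with the number of rounds. That is incompatible with the $h$-independent radius $120\varepsilon+6\delta$; ``about $6\varepsilon$ per round'' would give $6h\varepsilon$.
\item The radius $20\varepsilon$ is too small for the biclique step. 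In a length space, $\dist\le\frac12+5\varepsilon+3\delta$ only yields paths of length at most $\frac12+6\varepsilon+3\delta$. So you need $\xi\ge 6\varepsilon+3\delta$, hence holes of radius at least $12\xi+5\delta\ge 72\varepsilon+41\delta$.
\item The $\delta/(2h)$-increment fix is not available. In \cref{lem:unit-step} the thickening scale is the fatness parameter, so running it with $\delta/h$ only produces $\delta/h$-fat minors, which the hypothesis does not exclude.
\end{itemize}

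The gap is closed by the option you mention only in your last sentence, with a fixed radius: $Z_i:=Z\cup\bigcup_{j\le i}\Ball_G(v_j,120\varepsilon+5\delta)$, never re-enlarged.
\begin{itemize}
\item No points need to be discarded. $(1,Z)$-scatteredness is monotone under enlarging $Z$, so $A_i\subseteq A$ is automatically $(1,Z_i)$-scattered.
\item There is also no ``swallowing'' issue. \cref{lem:unit-step} puts no other constraint on $A_i$, and $A_{i+1}\subseteq G-Z_i^1$ by construction.
\item The output then satisfies $Z_i^2=\Ball_G(Z_i,\delta)\subseteq\Ball_G(Z,\delta)\cup\Ball_G(X,120\varepsilon+6\delta)$.
\end{itemize}
For the biclique step, apply \cref{lem:biclique-fat-minor} with the original $Z$ and $\xi:=10\varepsilon$. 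The drilled sets contain the $v_j$, so they cannot play the role of $Z$ there.
\begin{itemize}
\item For $x\in A_h$ and $j\le h$ we have $x\in A_j$. So there is an $x$-$v_j$ path of length at most $\frac12+6\varepsilon+3\delta\le\frac12+\xi$ inside $G-Z_{j-1}^1\subseteq G-\Ball_G(Z,\delta/2)$.
\item For $j<j'$, $v_{j'}\notin Z_{j'-1}^1$ gives $\dist_G(v_j,v_{j'})>120\varepsilon+5\delta=12\xi+5\delta$.
\end{itemize}
This produces a $(\frac32+50\varepsilon+\delta,\,10\varepsilon+\delta/2)$-shallow $\delta$-fat model of $K_h$. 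That contradicts the hypothesis, since $\varepsilon\le\frac1{200}$.
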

\begin{proof}
    We compute sets $A\eqqcolon A_0 \supseteq A_1 \supseteq A_2\supseteq \ldots$
    and points $v_1,v_2,v_3,\ldots \in G$ as follows.
    
    Let $i \geq 0$ and assume $A_i$ and $v_1,\ldots,v_i$ are computed. 
    If $|A_i| < 4^5 (h+1)^{10}$, we stop. Otherwise, we apply \cref{lem:unit-step}
    to $h$, $\delta$, $\varepsilon$, $Z_i \coloneqq Z \cup \bigcup_{j=1}^i \Ball_G(v_j, 120\varepsilon+5\delta)$,
    and $A_i$. ($Z_i$ plays the role of $Z$ in \cref{lem:unit-step}, not to be confused with $Z^a$ in the same lemma.)
    Let $Z^1\coloneqq \Ball_G(Z,\delta/2)$ and $Z_i^1 = \Ball_G(Z_i, \delta/2)$. 
    In case of the first outcome, say witnessed by a vertex $v\in G-Z^1_i$, we set \[v_{i+1} \coloneqq v\qquad \textrm{and}\qquad A_{i+1} \coloneqq  A_i \cap \Ball_{G-Z_i^1}(v, \tfrac{1}{2} + 5\varepsilon+3\delta),\] and increase $i$ by one.
    In case of the second outcome, say witnessed by a set $B$, we stop the procedure.
    The third outcome cannot happen due to the assumption on the excluded shallow fat minor of $G$.

    Observe that by construction, $|A_i| \geq |A|^{\left(1/10\right)^i}$ and $\{v_1,v_2,\ldots\}$ is $(120\varepsilon+5\delta)$-scattered in $G-Z$. 
    Furthermore, for every $1 \leq j \leq i$ and $x \in A_i$, we also have $x\in A_j$ and hence $\dist_{G-Z_{j}^1}(v_j,x)\leq \tfrac{1}{2}+5\varepsilon+3\delta$.
    Consequently, in $G-Z_j^1$ there exists an $x$-$v_j$-path $P^{j,x}$
    of length\footnote{Again, we add the extra $\eps$ summand is due to the fact that $\dist_{G-Z_{j}^1}(v_j,x)\leq \tfrac{1}{2}+5\varepsilon+3\delta$ means that the infimum of the lengths of the $v_j$-$x$-paths in $G-Z_j^1$ is $\tfrac{1}{2}+5\varepsilon+3\delta$, and not that there exists a path of this length.}  at most $\tfrac{1}{2}+6\varepsilon+3\delta$, which is also a path of this length in $G-Z^1$.

    So, if the procedure runs for $h$ steps, it computes the set $A_h$ satisfying $|A_h|\geq h$ as well as the set $\{v_1,\ldots,v_h\}$, to which we may apply \cref{lem:biclique-fat-minor} for $Z$, $\delta$, $\xi \coloneqq 10\varepsilon$; note here that $\varepsilon\leq \tfrac{1}{200}$ implies $\xi\leq \tfrac{1}{20}$. As a result, we conclude that
    $G$ contains a $(\tfrac{1}{2}+10\varepsilon, 10\varepsilon+\delta/2)$-shallow $\delta$-fat model of~$K_{h,h}$.
    Since $K_{h,h}$ contains $K_h$ as $1$-shallow minor, \cref{lem:minor-of-fat-minor}
    implies $G$ contains $K_h$ as a $(\tfrac{3}{2}+50\varepsilon+\delta, 10\varepsilon+\delta/2)$-shallow $\delta$-fat minor, which is a contradiction because $\tfrac{3}{2}+50\varepsilon+\delta<2$.
    
    From~\eqref{eq:small:A}, we infer that for every $0 \leq i \leq h$ 
    we have 
    \[ |A_i| \geq \left(\max\left(4^5(h+1)^{10}, m\right)\right)^{10^{h-i}}. \]
    This implies that the procedure needs to stop by detecting the second outcome of \cref{lem:unit-step}
    at some step $i < h$. Let $B$ be the set output by this outcome.
    Clearly, $|B| \geq |A_i|^{\tfrac{1}{10}} \geq m$. Since \cref{lem:unit-step} is invoked at step $i$
    with $Z_i = Z \cup \bigcup_{j=1}^i \Ball(v_j, 120\varepsilon+5\delta)$, 
    we have that $B$ is $(1+\eps, Z_i^2)$-scattered with
    \begin{align*}
    Z_i^2
        &= \Ball_G\left(Z \cup \bigcup_{j=1}^i \Ball_G(v_j, 120\varepsilon+5\delta),\, \delta\right) \\
        &= \Ball_G\left(Z \cup \Ball_G(X, 120\varepsilon+5\delta),\, \delta\right) \\
        &= \Ball_G(Z, \delta) \cup \Ball_G(X, 120\varepsilon+6\delta)
    \end{align*}
    and $X \coloneqq \{v_1,v_2,\ldots,v_i\}$. This finishes the proof of the lemma.
\end{proof}

\subsection{Iteration}\label{sec:iterate}

We are now ready to finish the main proof.

\metricuqwnew*
\begin{proof}
    We may assume that $h\geq 2$. We set
    \begin{eqnarray*}
        k &\coloneqq &  \max\left(0, \lceil300\cdot (\alpha/\beta-1)\rceil\right),\\
        N(m) &\coloneqq & m \cdot \left(\max\left(4^5(h+1)^{10}, m\right)\right)^{10^{hk}},\\
        s &\coloneqq & 1+(h-1)k.
    \end{eqnarray*}
    Thus we have $N(m)\leq m^{2^{\Oh(h\cdot \alpha/\beta)}}$ and $s\leq \Oh(h\cdot \alpha/\beta)$, as claimed.
    For the rest of the proof, we assume
    without loss of generality that $m \geq 4^5(h+1)^{10}$, as otherwise we can just reset $m \coloneqq 4^5(h+1)^{10}$.
    
    We proceed with the proof.
    First, let $A_0$ be an inclusion-wise maximal $\beta$-scattered set in $A$. Note that by the maximality, every element of $A$ is at distance at most $\beta$ from some element $A_0$. Hence, if we have $|A_0|<|A|/m$, then by the pigeonhole principle there exists some $u\in A_0$ such that $|A\cap \Ball_G(u,\beta)|\geq m$. Consequently, if we set $X\coloneqq \{u\}$ and $B\coloneqq A\cap \Ball_G(u,\beta)$, we find that $B$ is $(\alpha,\Ball_G(X,\beta))$-scattered (or even $(\alpha',\Ball_G(X,\beta))$-scattered for any $\alpha'\in \Rp$ since in this case $B\setminus \Ball_G(X,\beta) = \varnothing$), as required. Hence, from now on, we assume that
    \[|A_0|\geq |A|/m\geq N(m)/m=m^{10^{hk}}.\]
    We will also assume that $\alpha>\beta$, for otherwise $B\coloneqq A_0$ and $X\coloneqq \emptyset$ satisfy the required~conditions.

    Observe that $\alpha > \beta$ implies $k \geq 1$. 
    We also note that the assumption $\beta\geq 25\cdot\sqrt{\alpha\delta}$ implies that $\delta\leq \tfrac{\beta^2}{625\alpha}$, and hence
    \begin{equation}\label{eq:waran}
        \frac{\beta}{2k}=\frac{\beta}{2\cdot \lceil 300\cdot (\alpha/\beta-1)\rceil}\geq \frac{\beta}{600\cdot \alpha/\beta}=\frac{\beta^2}{600\alpha}\geq \delta.
    \end{equation}
    Also, as $\alpha > \beta$, we have
    \begin{equation}\label{eq:waran2}
        \delta \leq \frac{\beta^2}{625\alpha} < \frac{\beta}{625}.
    \end{equation}


    
    Our goal now is to compute sets $A_0 \supseteq A_1 \supseteq \ldots \supseteq A_k$ and
    $\emptyset \eqqcolon X_0 \subseteq X_1 \subseteq \ldots \subseteq X_k$.
    We will maintain the following invariants: 
    \begin{itemize}
    \item $|A_i| \geq m^{10^{h(k-i)}}$,
    \item $|X_i| \leq (h-1)i$,
    \item $A_i$ is $((1+\frac{i}{300})\beta, \Ball(X_i, (\frac{1}{2} + \frac{i}{2k})\beta))$-scattered in $G$.
    \end{itemize}
    Clearly, these invariants are satisfied for $i=0$. 
    Observe that if we construct $A_k$ and $X_k$, then we are done with $B \coloneqq A_k$ and $X \coloneqq X_k$:
    the first two invariants imply that $|A_k| \geq m$ and $|X_k| \leq (h-1)k\leq s$, while the third invariant
    implies that $A_k$ is $(\alpha, \Ball(X,\beta))$-scattered in $G$ by the choice of $k$.
    It remains to show how to construct $A_{i+1}$ and $X_{i+1}$ from $A_i$ and $X_i$.

    Assume then that for some $0 \leq i < k$ we have computed $A_i$ and $X_i$ satisfying these invariants.
    Let \[\ell \coloneqq (1+\tfrac{i}{300})\beta.\] As $i < k$, we have $\ell < \alpha$.
    Let $G'$ be the length space obtained from $G$ by rescaling the metric by a multiplicative factor of $1/\ell$ (i.e., $\dist_{G'}(x,y)=\dist_G(x,y)/\ell$ for all $x,y\in G$). 
    Note that $A_i$ is a $(1,\Ball(X_i,(\frac{1}{2}+\frac{i}{2k})\cdot \beta/\ell))$-scattered set in $G'$. 
    By rescaling, $G'$ does not contain $K_h$ as a
    $2\alpha/\ell$-shallow
    $\delta/\ell$-fat minor. 
    Let \[\delta' \coloneqq \delta/\ell\qquad\textrm{and}\qquad\varepsilon' \coloneqq \tfrac{1}{300}\cdot \beta/\ell.\]
    As $\alpha > \ell\geq \beta$, 
    it follows that $G'$ does not contain $K_h$ as $2$-shallow $\delta'$-fat minor. 
    Furthermore, 
    \[ \delta' = \delta/\ell \leq \tfrac{1}{625}\cdot \beta/\ell \leq \varepsilon' \qquad \mathrm{and}\qquad \varepsilon' = \tfrac{1}{300}\cdot \beta/\ell \leq \tfrac{1}{300},\]
    where the first inequality follows~\eqref{eq:waran2}.

    Let \[m' \coloneqq m^{10^{h(k-i-1)}}.\] Note that $|A_i| \geq (m')^{10h}$. 
    We can now apply \cref{lem:small-step} to 
    $h$, $m'$, $G'$, $\delta'$, $\eps'$, $Z' \coloneqq \Ball(X_i,(\frac{1}{2}+\frac{i}{2k})\cdot \beta/\ell)$
    and $A_i$,
    thus obtaining a set $X \subseteq G'$ of size at most $h-1$ 
    and $B \subseteq A_i$ of size at least $m'$ that is 
    \begin{center}
    $(1+\varepsilon',\Ball_{G'}(Z', \delta') \cup \Ball_{G'}(X, 120\varepsilon'+6\delta'))$-scattered    
    \end{center}
    in $G'$.
    After rescaling back, we have that 
    $B$ is 
    \begin{center}
    $\left(\ell + \varepsilon'\ell, \Ball_G\left(X_i, \left(\tfrac{1}{2} + \tfrac{i}{2k}\right)\beta + \delta'\ell \right) \cup \Ball_G(X, 120\varepsilon'\ell + 6\delta'\ell)\right)$-scattered
    \end{center}
    in $G$. 
    By~\eqref{eq:waran}, \eqref{eq:waran2}, and the choice of $\ell$, $\varepsilon'$, and $\delta'$, we have
    \begin{align*}
    \ell + \varepsilon'\ell &= \left(1 + \frac{i}{300}\right)\beta + \frac{1}{300}\beta = \left(1 + \frac{i+1}{300}\right)\beta,\\
    \left(\frac{1}{2}+\frac{i}{2k}\right)\beta+ \delta'\ell &= \left(\frac{1}{2}+\frac{i}{2k}\right)\beta + \delta \leq \left(\frac{1}{2}+\frac{i}{2k}\right)\beta + \frac{\beta}{2k} = \left(\frac{1}{2}+\frac{i+1}{2k}\right)\beta ,\\
    120\varepsilon'\ell + 6\delta'\ell &= \frac{120}{300}\beta + 6\delta \leq \frac{126}{300}\beta < \frac{1}{2}\beta. 
    \end{align*}
    Hence, $A_{i+1} \coloneqq B$ is $(1+\frac{i+1}{300}\beta, \Ball_G(X_{i+1}, (\frac{1}{2}+\frac{i+1}{2k})\beta))$-scattered
    in $G$, where $X_{i+1} \coloneqq X_i \cup X$. 
    This finishes the proof.
\end{proof}

\section{Approximation of \textsc{Norm $k$-Clustering} in fat-minor-free metrics}\label{sec:ksupplier}

In this section we give an approximation scheme for \textsc{Norm $k$-Clustering}, a general clustering problem proposed by Abbasi et al.~\cite{ABBCGKMSS23} as a common generalization of \textsc{$k$-Center}, \textsc{$k$-Median}, \textsc{$k$-Means}, and many others. Let us recall the definition and terminology.

An instance of \textsc{Norm $k$-Clustering} consists of:
\begin{itemize}
    \item an integer $k\in \N$;
    \item a metric space $G$;
    \item a set of potential facilities $F\subseteq G$;
    \item a multiset of clients $C\subseteq G$; and
    \item a polynomial-time computable monotone norm $\|\cdot\|$ on $\R^C$ (the real vector space of dimension $|C|$, indexed by the elements of $C$).
\end{itemize}
That $\|\cdot\|$ is \EMPH{monotone} means that if $\bar x,\bar y\in \R_{\geq 0}^C$ satisfy that $\bar x$ is not larger than $\bar y$ on each coordinate, then $\|\bar x\|\leq \|\bar y\|$. 

A few words of caution regarding the format of the input are necessary. First, the metric space $G$ is not given explicitly, as it may be infinite, but it will be convenient to think that $F$ and $C$ reside in some ambient metric space $G$ (which will be a length space in our case). For the problem itself the only relevant distances are those between the elements of $F\cup C$, hence we simply assume that on input we are given a matrix of distances between the elements of $F\cup C$. Consequently, it should not be surprising that the time complexity of the algorithms is expressed only in terms of $|F|+|C|$. Second, the norm $\|\cdot\|$ is given by a polynomial-time oracle to compute it.

A \EMPH{solution} to an instance $(k,G,F,C,\|\cdot\|)$ of \textsc{Norm $k$-Clustering} is a set of facilities $S\subseteq F$ with $|S|\leq k$. As usual, for a solution $S$ and a client $c\in C$, $\dist_G(c,S)$ is the distance between $c$ and the closest facility of the solution. Then, we define the \EMPH{cost} of the solution as
\[\val(S)\coloneqq \|(\dist_G(c,S)\colon c\in C)\|.\]
In other words, we take the vector of distances from the clients to the solution, and compute its $\|\cdot\|$-norm. By $\OPT$ we denote the optimum value of a solution. Then \textsc{Norm $k$-Clustering} is the optimization problem of finding a solution with as small cost as possible.

In the context of an instance of \textsc{Norm $k$-Clustering}, $\mathbf{1}$ denotes the all-ones vector in $\R^C$.

\subsection{Scatters}

We now recall the definition of \EMPH{scatters} and \EMPH{$\eps$-scatter dimension}, introduced by Abbasi et al.~\cite{ABBCGKMSS23} in the context of approximation schemes for \textsc{Norm $k$-Clustering}. As noted by Bourneuf and Ma.~Pilipczuk~\cite{BP25}, it is a metric variant of \EMPH{semi-ladders} of Fabia\'nski et al.~\cite{FPST18}.

\begin{definition}[$(\eps,r)$-scatter]
	Let $G$ be a metric space and $\eps,r\in \Rp$. An \EMPH{$(\eps,r)$-scatter} of order $\ell$ in $G$ is a pair of sequences $(a_1,\ldots,a_\ell)$ and $(b_1,\ldots,b_\ell)$ of points in $G$ such that
	\begin{itemize}
		\item $\dist_G(a_i,b_i)>(1+\eps)r$ for all $i\in [\ell]$, and
		\item $\dist_G(a_i,b_j)\leq r$ for all $i,j\in [\ell]$ with $i<j$.
	\end{itemize}
\end{definition}

\begin{definition}[$\eps$-scatter dimension]
    A class of metric spaces $\Cc$ has \EMPH{bounded $\eps$-scatter dimension} if there exists a function $\ell\colon (0,1)\to \N$ such that for every metric space $G\in \Cc$, $\eps\in (0,1)$, and $r\in \Rp$, $G$ does not contain any $(\eps,r)$-scatter of order larger than $\ell(1/\eps)$.
\end{definition}

Note that in this definition, the bound on the order of scatters must be independent of the distance~$r$. In other words, the orders of $(\eps,r)$-scatters are bounded uniformly for every possible $r$.

As proved by Bourneuf and Ma.\ Pilipczuk~\cite{BP25}, for every fixed graph $H$, the class of edge-weighted $H$-minor-free graphs has bounded $\eps$-scatter dimension. This generalized earlier results of Abbasi et al.~\cite{ABBCGKMSS23} for edge-weighted planar graphs and bounded-treewidth graphs. Besides this, Abbasi et al. observed that metrics of bounded doubling dimension also have bounded $\eps$-scatter dimension; this in particular applies to Euclidean spaces of fixed dimension.

The following statement summarizes the algorithmic results of Abbasi et al.

\begin{theorem}[\cite{ABBCGKMSS23}]\label{thm:abbasi}
    Suppose $\ell\in \N$ and $I=(k,G,F,C,\|\cdot\|)$ is an instance of \textsc{Norm $k$-Clustering} such that in $G$ there are no $(\eps,r)$-scatters of order larger than $\ell$, for any $r\in \Rp$. Then one can in time $\Oh_{k,\ell}((|F|+|C|)^{\Oh(1)})$ compute a solution to $I$ of cost at most $(1+\eps)\OPT$.

    In particular, if $\Cc$ is a class of metric spaces with bounded $\eps$-scatter dimension, then for every $\eps>0$, there exists a $(1+\eps)$-approximation algorithm for \textsc{Norm $k$-Clustering} on $\Cc$ with running time $\Oh_{\Cc,k,\eps}((|F|+|C|)^{\Oh(1)})$.
\end{theorem}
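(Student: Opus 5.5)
Since \cref{thm:abbasi} is quoted from~\cite{ABBCGKMSS23}, the formal proof is by citation. The plan below is the argument I would reconstruct to check that only the absence of long $(\eps,r)$-scatters is used. It is a branching ("iterative refinement") algorithm whose depth is controlled by the scatter bound. I first describe it for \textsc{$k$-Center} and then say how it is meant to extend to general monotone norms; that extension is the part I have not worked out.

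\emph{Step 1: \textsc{$k$-Center}.} Guess the optimum radius $r$; it is one of the $\Oh((|F|+|C|)^2)$ pairwise distances. Also guess a partition of the clients among $k$ optimal clusters, but do so lazily. Each cluster $j\in[k]$ keeps a request list $R_j\subseteq C$. For every $j$, choose any facility $f_j\in F$ with $\dist_G(f_j,c)\le r$ for all $c\in R_j$; if no such facility exists, the current branch dies. If every client is within $(1+\eps)r$ of $\{f_1,\ldots,f_k\}$, output this set. Otherwise pick a violating client $c$ and branch over the $k$ choices of the cluster $j$ it joins, adding $c$ to $R_j$. Along the branch that follows the true optimal clustering, a feasible $f_j$ always exists: the optimal center of cluster $j$ serves all of $R_j$ within $r$.

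\emph{Step 2: bounding the depth.} Fix a cluster $j$. List the facilities $a_1,a_2,\ldots$ that were chosen for $j$ at the moments when requests $b_1,b_2,\ldots$ were added to $R_j$, where $b_i$ is the violator that prompted the $i$-th addition. Then $\dist_G(a_i,b_i)>(1+\eps)r$, because $b_i$ was violated by $a_i$. On the other hand $\dist_G(a_{i'},b_i)\le r$ for every $i'>i$, because $b_i\in R_j$ whenever $a_{i'}$ is chosen. Reversing the index order turns this into an $(\eps,r)$-scatter in $G$, so each $|R_j|\le\ell+1$. The recursion tree therefore has depth at most $k(\ell+1)$ and branching factor $k$. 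Each node costs polynomial time, so the total running time is $k^{\Oh(k\ell)}\cdot(|F|+|C|)^{\Oh(1)}$, which matches the claimed bound.

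\emph{Step 3: general monotone norms (the expected obstacle).} The intended approach is as follows. Guess geometrically rounded distance thresholds $r_c$ for the clients, up to $(1+\eps)$ factors; in~\cite{ABBCGKMSS23} this relies on a structural lemma bounding the number of distinct relevant radius levels. Then run the same request-based refinement, where a request is now a pair $(c,r_c)$. The difficulty is that each cluster now sees requests at several scales, so Step 2 must be carried out separately for each radius level. The scatter bound has to hold uniformly in $r$ for the depth to stay bounded; this is exactly why the hypothesis is stated for every $r\in\Rp$. A second issue is that feasibility of the request constraints must be checked in polynomial time, and here monotonicity of $\|\cdot\|$ is needed to argue that meeting all thresholds up to $1+\eps$ gives cost at most $(1+\eps)\OPT$. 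This is the delicate part, and for it I would defer to the detailed analysis in~\cite{ABBCGKMSS23}.

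The ``in particular'' clause then follows immediately by taking $\ell=\ell(1/\eps)$.
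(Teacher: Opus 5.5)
Your proposal matches what the paper does. The paper gives no proof of this theorem and simply imports it from \cite{ABBCGKMSS23}, so the citation is the entire argument, and deferring the general-norm case to that paper, as you do in Step~3, is the paper's stance too. Your $k$-Center reconstruction is a correct check that only the absence of long $(\eps,r)$-scatters is used. Two small remarks: reversing the order of the pairs $(a_i,b_i)$ yields an $(\eps,r)$-scatter whose order equals the number of additions, so in fact $|R_j|\le\ell$ rather than $\ell+1$. Also, the guess $r=0$ is not covered by the hypothesis, but it is trivial, since any $(\eps,0)$-scatter has order at most $2$.
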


Here, we use drill-flatness to prove that in fat-minor-free length spaces, there is a bound on the order of $(\eps,r)$-scatters for distances $r$ that are large compared to the fatness of the excluded minor.
The proof of the following lemma is essentially the
same as~\cite[Lemma~4.1]{BP25} and also can be seen as an adaptation of the proof of \cite[Lemma~29]{FPST18} to the metric setting.

\begin{lemma}\label{lem:flat-no-scatter}
	Let $\eps\in (0,1)$ and $r\in \Rp$. Suppose $G$ is a length space that is $(2r,\eps r/3)$-metric-flat with wideness $N\colon \N\to \N$ and margin $s\in \N$. Then $G$ does not contain an $(\eps,r)$-scatter of order $N(2\cdot(2+\lfloor 6/\eps\rfloor)^s+1)$.
\end{lemma}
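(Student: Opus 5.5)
The plan is to adapt the semi-ladder argument of Fabia\'nski et al.: here "metric-flat with wideness $N$ and margin $s$" is read as $(2r,\eps r/3)$-drill-flatness with overhead $N$ and budget $s$. Suppose for contradiction that $(a_1,\dots,a_L)$, $(b_1,\dots,b_L)$ is an $(\eps,r)$-scatter with $L = N(M)$, where $M = 2\cdot(2+\lfloor 6/\eps\rfloor)^s+1$.

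First I check that the $a_i$ are pairwise distinct: if $a_i = a_j$ with $i<j$, then $\dist(a_i,b_j)\le r$ while $\dist(a_j,b_j)>(1+\eps)r$, a contradiction. So $A=\{a_1,\dots,a_L\}$ has size $N(M)$. Drill-flatness then gives $B\subseteq A$ with $|B|=M$ and $X=\{x_1,\dots,x_t\}$ with $t\le s$, such that $B$ is $(2r,\Ball(X,\eps r/3))$-scattered. Write $D=\Ball(X,\eps r/3)$.

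The key step is a profile (pigeonhole) argument. To each $a\in B$ (and also to each relevant $b_j$) I assign a vector of rounded distances to the holes, $\pi(a)=(\lceil \dist_G(a,x_q)/(\eps r/3)\rceil)_{q}$, capped at value $2+\lfloor 6/\eps\rfloor$. The cap represents "farther than about $2r$", which is irrelevant. The vectors take at most $(2+\lfloor 6/\eps\rfloor)^s$ values. Since $|B| = 2(\#\text{profiles})+1$, some profile class contains three indices $i<j<k$ with $a_i,a_j,a_k\in B$ (possibly only two suffice).

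Now fix $i<j$ in the same class and look at $b_j$, for which $\dist(a_i,b_j)\le r$ and $\dist(a_j,b_j)>(1+\eps)r$. Take a near-shortest $a_i$–$b_j$ path $P$ of length below $r+\eps r/6$. There are two cases.
\begin{itemize}
\item If $P$ avoids $D$, I use the scatteredness in $G-D$: either combine with a similar path from $a_k$ (with $j<k$, so $\dist(a_j,b_k)\le r$ and $\dist(a_i,b_k)\le r$) to produce an $a_i$–$a_j$ or $a_j$–$a_k$ connection of length at most $2r$ in $G-D$, contradicting $2r$-scatteredness. This is exactly why the middle index and the factor $2$ in $M$ appear.
\item If $P$ meets $D$ near some $x_q$, then $\dist(b_j,x_q)\le r - \dist(a_i,x_q)+\eps r/3+\eps r/6$. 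Since $a_j$ has the same profile, $\dist(a_j,x_q)\le \dist(a_i,x_q)+\eps r/3$, so $\dist(a_j,b_j)\le r+\eps r/3+\eps r/3+\eps r/6<(1+\eps)r$, a contradiction.
\end{itemize}

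The main obstacle is the first case. Using only the pair $(i,j)$ does not give an $a$–$a$ path avoiding $D$. I would therefore take the $a_i$–$b_k$ and $a_j$–$b_k$ paths: if both avoid $D$, they give $\dist_{G-D}(a_i,a_j)\le 2r+\eps r/3$, which requires tuning the slack (using $\le$ versus $>$ and taking paths of length $r+\eta$ with $\eta\to0$, together with the paper's convention on the $2r$ threshold). If either path hits $D$, the second case argument with the shared profile yields the contradiction. Getting the constants $\eps r/3$ and $6/\eps$ to close exactly is the delicate bookkeeping.
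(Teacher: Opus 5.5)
Your final argument is correct and is the paper's argument in mirror image: you apply drill-flatness to the $a$'s, take three same-profile indices $i<j<k$, join $a_i$ and $a_j$ through $b_k$, and on a hit transfer the bound to $a_k$ via the shared profile. The paper applies drill-flatness to the $b$'s, joins $b_2$ and $b_3$ through $a_1$, and transfers to $b_1$; the two are equivalent because reversing the indices and swapping $a$ with $b$ maps an $(\eps,r)$-scatter to an $(\eps,r)$-scatter.

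Fix the bookkeeping as follows. In the hitting case you pick up $2\rho$ with $\rho=\eps r/3$ (one for each end of the hit point), not $\rho$, so the bound is $\mathrm{len}(P)+3\rho$. This closes only in the limit $\eta\to 0$, after pigeonholing on which path hits and which hole it hits; it does not close with a fixed $\eps r/6$ slack. Also use $\lfloor \dist/\rho\rfloor$ together with a single $\infty$ value, as the paper does, to get exactly $2+\lfloor 6/\eps\rfloor$ profile values; your ceiling with the cap you state gives one value too many.
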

\begin{proof}
	Let $\rho\coloneqq \eps r/3$. For contradiction suppose that in $G$ there is an $(\eps,r)$-scatter $a_1,\ldots,a_\ell,b_1,\ldots,b_\ell$ of some order $\ell\geq N(2\cdot(2+\lfloor 6/\eps\rfloor)^s+1)$. Note that the points $a_1,\ldots,a_\ell$ must be pairwise different, as they have pairwise different vectors of distances to $b_1,\ldots,b_\ell$. By applying the assumed metric-flatness to $\{b_1,\ldots,b_\ell\}$ and passing to subsequences of $a_1,\ldots,a_\ell$ and $b_1,\ldots,b_\ell$, at the cost of reducing the lower bound on $\ell$ to $\ell\geq 2\cdot(2+\lfloor 6/\eps\rfloor)^s+1$ we may assume the following: 
	\begin{equation}\label{c:flat}
		\textrm{
		There is as set $X$ with $|X|\leq s$ such that $\{b_1,\ldots,b_\ell\}$ is $(2r,\Ball(X,\rho))$-scattered.
		}
	\end{equation}

	\newcommand{\profN}[1]{\mathsf{prof}_{#1}}

	For every $i\in [\ell]$, we define the \emph{profile} of $b_i$ on $X$ as a function $\profN{b_i}\colon X\to \{0,1,\ldots,\lfloor 6/\eps\rfloor,+\infty\}$ follows: for $x\in X$,	
	\[\prof{b_i}{x}\coloneqq \begin{cases} \textrm{largest }j\in \N \textrm{ such that } j\cdot\rho\leq \dist_{G}(b_i,x) & \quad\textrm{if }\dist_G(b_i,x)\leq 2r,\\
	+\infty & \quad\textrm{otherwise.}
	\end{cases}\]
	Note that there are at most $(2+\lfloor 6/\eps\rfloor)^s$ distinct profiles. Hence, by applying the pigeonhole principle and passing to subsequences of $a_1,\ldots,a_\ell$ and $b_1,\ldots,b_\ell$, at the cost of reducing the lower bound on $\ell$ to $\ell\geq 3$, we may assume the following.
	\begin{equation}\label{c:profiles}
		\textrm{All the vertices $b_1,\ldots,b_\ell$ have the same profile on $X$.}
	\end{equation}

\begin{figure}[htbp]
    \centering
    \begin{tikzpicture}[
    >=Latex,
    every node/.style={font=\small},
    apt/.style={
        circle,
        fill=blue!70!black,
        draw=white,
        line width=.4pt,
        inner sep=2.5pt
    },
    bpt/.style={
        circle,
        fill=orange!90!black,
        draw=white,
        line width=.4pt,
        inner sep=2.5pt
    },
    xpt/.style={
        circle,
        fill=red!75!black,
        draw=white,
        line width=.4pt,
        inner sep=2.1pt
    },
    ypt/.style={
        circle,
        fill=black,
        draw=white,
        line width=.35pt,
        inner sep=1.6pt
    },
    p2tube/.style={
        blue!45!black,
        line width=5pt,
        opacity=.16,
        line cap=round
    },
    p3tube/.style={
        teal!55!black,
        line width=5pt,
        opacity=.14,
        line cap=round
    },
    bxtube/.style={
        violet!65!black,
        line width=4.6pt,
        opacity=.13,
        line cap=round
    },
    p2/.style={
        blue!65!black,
        line width=1.15pt,
        line cap=round
    },
    p3/.style={
        teal!55!black,
        line width=1.15pt,
        line cap=round
    },
    bxpath/.style={
        violet!70!black,
        line width=1.05pt,
        line cap=round
    },
    ball/.style={
        draw=red!70!black,
        dashed,
        fill=red!15,
        fill opacity=.34,
        line width=.9pt
    },
    alignline/.style={
        gray!45,
        dashed,
        line width=.65pt
    },
    smalllabel/.style={font=\scriptsize},
    note/.style={
        font=\scriptsize,
        align=center,
        text width=10.6cm
    }
]

\def\Gwidth{8.80}   
\def\Gheight{5.80}  

\coordinate (Gcenter) at (3.60,.35);

\begin{scope}[
    shift={(Gcenter)},
    x={(\Gwidth cm,0cm)},
    y={(0cm,\Gheight cm)}
]
    \path[fill=gray!5,draw=black!35,line width=.9pt]
        plot[smooth cycle,tension=.85] coordinates {
            (-.50,-.44)
            (-.48,.43)
            (-.33,.52)
            (.08,.53)
            (.42,.45)
            (.50,.12)
            (.47,-.38)
            (.24,-.50)
            (-.34,-.49)
        };

\end{scope}

\node[font=\large] at (3.95,-3.35) {$G$};

\coordinate (b1) at (0,2.10);
\coordinate (b2) at (3.35,2.10);
\coordinate (b3) at (6.70,2.10);

\coordinate (a1) at (0,-1.80);
\coordinate (a2) at (3.35,-1.80);
\coordinate (a3) at (6.70,-1.80);

\foreach \i in {1,2,3}{
    \draw[alignline] (b\i) -- (a\i);
}

\coordinate (x) at (2.25,.25);
\coordinate (y) at (2.55,.55);

\draw[ball] (x) circle (.70);
\node[smalllabel,red!70!black] at ($(x)+(-.05,.96)$) {$\mathrm{Ball}(x,\rho)$};


\draw[p2tube]
    (a1)
    .. controls (.55,-.85) and (1.35,-.05) .. (y)
    .. controls (2.95,1.20) and (3.15,1.65) .. (b2);

\draw[p3tube]
    (a1)
    .. controls (.20,-1.30) and (1.15,-.88) .. (2.35,-.86)
    .. controls (3.95,-.84) and (5.55,-.72) .. (6.25,-.30)
    .. controls (6.62,.25) and (6.74,1.30) .. (b3);

\draw[bxtube]
    (b1)
    .. controls (-.55,1.45) and (.45,.72) .. (1.05,1.05)
    .. controls (1.85,1.50) and (1.62,.05) .. (x);

\draw[p2]
    (a1)
    .. controls (.55,-.85) and (1.35,-.05) .. (y)
    .. controls (2.95,1.20) and (3.15,1.65) .. (b2);

\draw[p3]
    (a1)
    .. controls (.20,-1.30) and (1.15,-.88) .. (2.35,-.86)
    .. controls (3.95,-.84) and (5.55,-.72) .. (6.25,-.30)
    .. controls (6.62,.25) and (6.74,1.30) .. (b3);

\draw[bxpath]
    (b1)
    .. controls (-.55,1.45) and (.45,.72) .. (1.05,1.05)
    .. controls (1.85,1.50) and (1.62,.05) .. (x);

\node[bpt] at (b1) {};
\node[bpt] at (b2) {};
\node[bpt] at (b3) {};

\node[apt] at (a1) {};
\node[apt] at (a2) {};
\node[apt] at (a3) {};

\node[xpt] at (x) {};
\node[ypt] at (y) {};

\node[smalllabel,orange!90!black] at ($(b1)+(0,.38)$) {$b_1$};
\node[smalllabel,orange!90!black] at ($(b2)+(0,.38)$) {$b_2$};
\node[smalllabel,orange!90!black] at ($(b3)+(0,.38)$) {$b_3$};

\node[smalllabel,blue!70!black] at ($(a1)+(0,-.38)$) {$a_1$};
\node[smalllabel,blue!70!black] at ($(a2)+(0,-.38)$) {$a_2$};
\node[smalllabel,blue!70!black] at ($(a3)+(0,-.38)$) {$a_3$};

\node[smalllabel,red!75!black] at ($(x)+(-.08,-.20)$) {$x$};
\node[smalllabel] at ($(y)+(.10,-.2)$) {$y$};

\draw[red!70!black,dotted,line width=.9pt] (x) -- (y);

\node[smalllabel,blue!65!black] at (0.85,-.35)
    {$P_2$};

\node[smalllabel,teal!55!black] at (4.45,-1.12)
    {$P_3$};







\end{tikzpicture}
    \vspace{-2em}
    \caption{ $P_2$ intersects $\Ball(x,\rho)$ at a point $y$.}
    \label{fig:3path}
\end{figure}

	As $a_1,\ldots,a_\ell$ and $b_1,\ldots,b_\ell$ form an $(\eps,r)$-scatter, we have $\dist_G(a_1,b_2)\leq r$ and $\dist_G(a_1,b_3)\leq r$, witnessed by an $a_1$-$b_2$-path $P_2$ of length at most $r$ and an  $a_1$-$b_3$-path $P_3$ of length at most $r$ in $G$. The union of those two paths contains a $b_2$-$b_3$-path of length at most $2r$, which, by \eqref{c:flat}, must intersect $\Ball(x,\rho)$ for some $x\in X$. 
	Therefore, $P_2$ or $P_3$ intersects $\Ball(x,\rho)$; we assume that $P_2$ does, the proof in the other case is entirely symmetric. See \Cref{fig:3path}. By triangle inequality we infer that
	\begin{equation}\label{eq:kookaburra}
		\dist_G(b_2,x)+\dist_G(x,a_1)\leq \len(P_2)+2\rho\leq r+2\rho.
	\end{equation}
	In particular, $\dist_G(b_2,x)\leq r+2\rho\leq 2r$.
	
	As $\prof{b_1}{x}=\prof{b_2}{x}$ and this value is finite due to $\dist_G(b_2,x)\leq 2r$, it follows that 
	\begin{equation}\label{eq:sroka}
	\dist_G(b_1,x)\leq \dist_G(b_2,x)+\rho.
	\end{equation}
	By combining \eqref{eq:kookaburra} and \eqref{eq:sroka}, we conclude that
	\[\dist_G(a_1,b_1)\leq \dist_G(b_1,x)+\dist_G(x,a_1)\leq \dist_G(b_2,x)+\rho+\dist_G(x,a_1)\leq r+3\rho = (1+\eps)r.\]
	This is a contradiction with $a_1,\ldots,a_\ell$ and $b_1,\ldots,b_\ell$ being an $(\eps,r)$-scatter.
\end{proof}


By combining \cref{lem:flat-no-scatter,thm:metricuqwnew}, we easily obtain the statement promised in \cref{sec:intro}.

\scatter*
\begin{proof}
    We set
    \[c\coloneqq 25^2\cdot 3^2\cdot 2\cdot \eps^{-2}=11250\cdot \eps^{-2}.\]
    Consider any $r\geq c\delta$ and let 
    \[\alpha\coloneqq 2r\qquad \text{and}\qquad \beta\coloneqq \eps r/3.\]
    The assumption that $r\geq c\delta$ implies that
    \[25\sqrt{\alpha\delta}\leq 25\sqrt{2r^2/c}=\eps r/3=\beta.\]
    Hence, we may apply \cref{thm:metricuqwnew} and conclude that $G$ is $(\alpha,\beta)=(2r,\eps r/3)$-drill-flat with overhead $N(m)\leq m^{2^{\Oh(h/\eps)}}$ and budget $s\leq \Oh(h/\eps)$.
	We may now apply \cref{lem:flat-no-scatter} to conclude that $G$ does not contain any $(\eps,r)$-scatter of order $\ell$, for some $\ell \leq 2^{2^{\Oh(h/\eps)}}$.
\end{proof}

\newcommand{\snap}{\mathsf{snap}}

\subsection{Approximation in fat-minor-free metrics}

We have now all the tools to prove \cref{thm:apx}. The proof relies on a quite standard ``coarsening'' argument that makes all the non-zero distances large.


\apxClust*
\begin{proof}
    Let $I\coloneqq (k,G,F,C,\|\cdot\|)$ be the input instance.
    Without loss of generality we may assume that $\eps<0$.
    Set $r^\circ\coloneqq c\delta$, where $c\leq \Oh(1/\eps^2)$ is the constant provided by \cref{cor:scatter-bound} for $\eps$; thus $r^\circ\leq \Oh(\delta/\eps^2)$. By \cref{cor:scatter-bound}, there exists $\ell\leq 2^{2^{\Oh(h/\eps)}}$ such that $G$ does not contain any $(\eps,r)$-scatters of order larger than $\ell$, for any $r\geq r^\circ$. 
    
    Let $W\subseteq G$ be the set of all the points of $G$ featured in $F$ or $C$; thus $|W|\leq |F|+|C|$. Next, let $\wh{W}\subseteq W$ be an inclusion-wise maximal $r^\circ$-scattered set in $W$, constructed greedily in time $|W|^{\Oh(1)}\leq (|F|+|C|)^{\Oh(1)}$. For every $w\in W$, let $\snap(w)\in \wh{W}$ be the point of $\wh{W}$ that is the closest to $w$ (ties broken arbitrarily). By the maximality of $\wh{W}$, we have
    \[\dist_G(w,\snap(w))\leq r^\circ\qquad\textrm{for all }w\in W.\]

    Next, define
    \begin{itemize}
        \item $\wh{G}$ to be the metric space obtained from $G$ by restricting the ground set to $\wh{W}$ and inheriting the metric from $G$ (note: this is no longer a length space, as the ground set of $\wh{G}$ is finite);
        \item $\wh{F}\coloneqq \snap(F)$; and
        \item $\wh{C}\coloneqq \snap(C)$ (as a multiset).
    \end{itemize}
    We note that this construction allows us to extend the assumption on the exclusions of large scatters to all the distances.

    \begin{claim}\label{cl:fullScattDim}
        For any $r\in \Rp$, $\wh{G}$ does not contain any $(\eps,r)$-scatter of order larger than $\max(\ell,2)$.
    \end{claim}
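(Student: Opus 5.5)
Fix $r\in\Rp$ and an $(\eps,r)$-scatter $(a_1,\ldots,a_t)$, $(b_1,\ldots,b_t)$ in $\wh{G}$; I will show $t\leq\max(\ell,2)$. The plan splits into two cases according to how $r$ compares with $r^\circ$.

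\emph{Case $r\geq r^\circ$.} Every point of $\wh{G}$ is a point of $G$, and distances in $\wh{G}$ are the restriction of $\dist_G$. Hence both defining conditions of a scatter (the strict lower bound $\dist(a_i,b_i)>(1+\eps)r$ and the upper bounds $\dist(a_i,b_j)\leq r$ for $i<j$) transfer verbatim from $\wh{G}$ to $G$. So the same sequences form an $(\eps,r)$-scatter in $G$ with $r\geq r^\circ=c\delta$. Since $G$ excludes $K_h$ as a $\delta$-fat minor, it in particular excludes it as a $4r$-shallow one, so \cref{cor:scatter-bound} gives $t\leq \ell$.

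\emph{Case $r<r^\circ$.} Here I use that $\wh{W}$ is $r^\circ$-scattered: distinct points of $\wh{G}$ are at distance $>r^\circ>r$. Therefore $\dist(a_i,b_j)\leq r$ forces $a_i=b_j$. Suppose $t\geq 3$. Then $a_1=b_2$ and $a_1=b_3$, and also $a_2=b_3$. Combining these, $a_2=b_3=a_1=b_2$, so $\dist(a_2,b_2)=0$. This contradicts $\dist(a_2,b_2)>(1+\eps)r>0$. Hence $t\leq 2$.

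Together the two cases give the bound $\max(\ell,2)$. The only mildly delicate point is confirming that the hypothesis of \cref{cor:scatter-bound} (exclusion of a $4r$-shallow fat minor) follows from plain fat-minor exclusion; this holds because shallow models are in particular models. The rest is immediate.
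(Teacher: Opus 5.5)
Your proof is correct and follows the paper's argument. Like the paper, you split according to how $r$ compares with $r^\circ$: for large $r$ you transfer the scatter to $G$ and invoke \cref{cor:scatter-bound}, and for small $r$ you use that $\wh{W}$ is $r^\circ$-scattered. You also spell out explicitly why order $3$ is impossible in the small case, which the paper leaves implicit. Putting the boundary value $r=r^\circ$ into the first case rather than the second is equally valid, since the corollary applies to all $r\geq c\delta=r^\circ$.
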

    \begin{claimproof}
        If $r>r^\circ$, then the claim follows from the non-existence of such scatters in $G$, because $\wh{G}$ inherits the metric from $G$. And if $r\leq r^\circ$, then $\wh{G}$ cannot have any $(\eps,r)$-scatter of order larger than~$2$, because any two distinct points in $\wh{G}$ are at distance larger than $r^\circ$. 
    \end{claimproof}
    
    We now construct a new instance $\wh{I}\coloneqq (k,\wh{G},\wh{F},\wh{S},\|\cdot\|)$ of \textsc{Norm $k$-Clustering}. In the sequel, by $\OPT$ and $\wh{\OPT}$ we respectively denote the minimum costs of solutions in $I$ and in $\wh{I}$, and by $\val_I(\cdot)$ and $\val_{\wh{I}}(\cdot)$ the costs of solutions in $I$ and in $\wh{I}$. 

    \begin{claim}\label{cl:compareOPT}
        We have
        \[\OPT-2r^\circ\cdot\|\mathbf{1}\|\leq \wh{\OPT}\leq \OPT+2r^\circ\cdot\|\mathbf{1}\|.\]
        Moreover, every solution $\wh{S}$ to $\wh{I}$ can be in time $(|F|+|C|)^{\Oh(1)}$ transformed to a solution $S$ to $I$ such that $\val_I(S)\leq \val_{\wh{I}}(\wh{S})+2r^\circ\cdot\|\mathbf{1}\|$. 
    \end{claim}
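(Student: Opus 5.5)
The plan is to compare $I$ and $\wh{I}$ through the snapping map, using only the bound $\dist_G(w,\snap(w))\leq r^\circ$ for all $w\in W$, the triangle inequality, and the monotonicity of $\|\cdot\|$. The key elementary fact is that for any client $c\in C$ and facilities $f$ and $\snap(f)$, we have $|\dist_G(c,f)-\dist_G(\snap(c),\snap(f))|\leq 2r^\circ$, since moving each endpoint by at most $r^\circ$ changes the distance by at most that much. I will then pass from coordinatewise bounds to norm bounds. If $\bar x\leq \bar y+2r^\circ\mathbf{1}$ coordinatewise with $\bar x,\bar y\geq 0$, then monotonicity and the triangle inequality for the norm give $\|\bar x\|\leq\|\bar y+2r^\circ\mathbf{1}\|\leq \|\bar y\|+2r^\circ\|\mathbf{1}\|$. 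Note that $\bar y+2r^\circ\mathbf 1$ is nonnegative, so monotonicity applies.

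\emph{Upper bound $\wh{\OPT}\leq\OPT+2r^\circ\|\mathbf{1}\|$.} Take an optimal solution $S$ to $I$ and set $\wh S\coloneqq\snap(S)\subseteq\wh F$, so that $|\wh S|\leq k$. The clients of $\wh I$ form the multiset $\snap(C)$, and I index them by the original clients $c\in C$. For each $c$, let $f\in S$ be its closest facility. Then $\dist(\snap(c),\wh S)\leq\dist(\snap(c),\snap(f))\leq \dist(c,f)+2r^\circ=\dist(c,S)+2r^\circ$. Applying the norm step yields $\val_{\wh I}(\wh S)\leq \OPT+2r^\circ\|\mathbf 1\|$.

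\emph{Transfer back, and the lower bound.} Given a solution $\wh S$ to $\wh I$, I need a solution $S\subseteq F$. For each $\wh f\in\wh S$, which equals $\snap(f)$ for some $f\in F$ by definition of $\wh F$, pick one such preimage $f$; computing this takes polynomial time. Let $S$ be the set of chosen preimages, so $|S|\leq|\wh S|\leq k$. For a client $c$, let $\wh f=\snap(f)$ be the facility of $\wh S$ closest to $\snap(c)$. Then $\dist(c,S)\leq\dist(c,f)\leq\dist(\snap(c),\snap(f))+2r^\circ=\dist(\snap(c),\wh S)+2r^\circ$. The norm step then gives $\val_I(S)\leq\val_{\wh I}(\wh S)+2r^\circ\|\mathbf 1\|$. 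Applying this to an optimal $\wh S$ gives $\OPT\leq\wh{\OPT}+2r^\circ\|\mathbf 1\|$, which is the remaining inequality.

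The only delicate point is bookkeeping rather than mathematics. One must index the multiset $\wh C$ by $C$ so that the norm on $\R^{\wh C}$ is identified with the given norm on $\R^C$; in particular, $\mathbf 1$ is the same vector in both instances. One must also ensure that the chosen preimages lie in $F$, not merely in $W$. Everything else follows directly from the triangle inequality.
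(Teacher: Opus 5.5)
Your proof is correct and matches the paper's argument: you transfer $\wh S$ back by choosing $\snap$-preimages in $F$, bound each client's distance by $\dist_G(\snap(c),\wh S)+2r^\circ$, and conclude using monotonicity and the triangle inequality for the norm, obtaining the other inequality by snapping an optimal solution of $I$. You write out the ``symmetric reasoning'' direction and the indexing of the client multiset, both of which the paper leaves implicit, but the approach is the same.
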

    \begin{claimproof}
        Consider any solution $\wh{S}\subseteq \wh{F}$ to $\wh{I}$. For every $\wh{s}\in \wh{S}$, pick any $s\in F$ such that $\snap(s)=\wh{s}$ and let $S$ be the set of those points $s$. Thus $|S|\leq |\wh{S}|\leq k$ and $S$ is a solution to $I$. Note that since $\dist_G(s,\snap(s)=\wh{s})\leq r^\circ$, we have $\dist_G(w,S)\leq \dist_G(w,\wh{S})+r^\circ$ for each $w\in W$. Further, since $\dist_G(c,\snap(c))\leq r^\circ$ for each $c\in C$, by triangle inequality we have
        \[\dist_G(c,S)\leq \dist_G(\snap(c),\wh{S})+2r^\circ.\]
        This means that the vector $(\dist_G(c,S)\colon c\in C)$ is coordinate-wise smaller or equal than the vector $(\dist_G(\snap(c),\wh{S})+2r^\circ\colon c\in C)$. By the monotonicity of the norm $\|\cdot\|$ and triangle inequality, we conclude that
        \[\val_I(S)\leq \val_{\wh{I}}(\wh{S})+2r^\circ\cdot \|\mathbf{1}\|.\]
        By applying this conclusion to $\wh{S}$ being an optimum solution to $\wh{I}$, we infer that $\OPT\leq \wh{\OPT}+2r^\circ\cdot \|\mathbf{1}\|$, or equivalently $\OPT-2r^\circ\cdot \|\mathbf{1}\|\leq \wh{\OPT}$.

        A symmetric reasoning yields that $\wh{\OPT}\leq \OPT+2r^\circ\cdot \|\mathbf{1}\|$ as well.
    \end{claimproof}

    By \cref{cl:fullScattDim}, we may apply the algorithm of \cref{thm:abbasi} to the instance $\wh{I}$ and thus, in time $\Oh_{k,\eps}((|F|+|C|)^{\Oh(1)})$, compute a solution $\wh{S}$ to $\wh{I}$ with $\val_{\wh{I}}(\wh{S})\leq (1+\eps)\wh{\OPT}$. By \cref{cl:compareOPT}, this solution can be transformed into a solution $S$ to $I$ with
    \begin{align*}\val_I(S)&\leq \val_{\wh{I}}(\wh{S})+2r^\circ\cdot\|\mathbf{1}\|\\ &\leq (1+\eps)\wh{\OPT}+2r^\circ\cdot \|\mathbf{1}\|\\
    &\leq (1+\eps)\OPT+(4+2\eps)r^\circ\cdot\|\mathbf{1}\|\\
    &\leq (1+\eps)\OPT+\Oh(\delta/\eps^2)\cdot \|\mathbf{1}\|.\end{align*}
    This concludes the proof.
\end{proof}

\part{Drill-flatness in hereditary graph classes}

The goal of this part is to prove the following.

\indeq*

\noindent \Cref{fig:implications} gives an overview of the proof of \Cref{thm:equivalences}.

\begin{figure}[ht]
\centering
\scalebox{0.9}{
\begin{tikzpicture}[%
  node distance=27mm,>=Latex,
  every edge/.style={draw=black,thin},
  block/.style={
    draw,
    fill=white,
    rectangle,
    minimum width={3.2cm},
    minimum height={1.4cm},
    align = center
  }
  ]

\node[block] (ismf)
{\ref{c:ismf} ISMF};

\node[block, right = 4cm of ismf] (df)
{\ref{c:df} drill-flat};

\node[block, below = 1.5cm of df] (fsmf)
{\ref{c:fsmf} FSMF};

\node[block, below = 1.5cm of ismf] (patt)
{\ref{c:patt} no $\clique{r}{k}$ \\ and $\web{r}{k}$};

\path[->]
(ismf) edge node[above] {\Cref{thm:indminoruqw}} (df);

\path[->]
(df) edge node[right] {\Cref{lem:drill-flat-implies-fsmf}} (fsmf);

\path[->]
(fsmf) edge node[below] {\Cref{lem:fsmf-implies-pattern-free}} (patt);

\path[->]
(patt) edge node[left] {\Cref{thm:patterns}} (ismf);

\end{tikzpicture}
}
\caption{The implications comprising \cref{thm:equivalences}.}
\label{fig:implications}
\end{figure}

We collect the definitions of the notions comprising \Cref{thm:equivalences} here.
Induced-shallow-minor-freeness is a natural analog of nowhere denseness.

\begin{definition}[ISMF]\label{def:ismf}
  A graph class $\CC$ is \emph{induced-shallow-minor-free (ISMF)} if for every $d \in \N$ there exists a graph $H_d$ such that every $G \in \CC$ excludes $H_d$ as an $d$-shallow induced minor.
\end{definition}

\paragraph{Fat minors in unweighted graphs.}
For fat-shallow-minor-freeness, we need to note that, in literature, fat minors in unweighted graphs are defined slightly differently
than the length space definition we used so far.
For (unweighted) graphs $G$ and $H$ and $\delta > 0$, 
we say that $G$ contains $H$ as a $\delta$-fat minor if there is a mapping
$\phi$ from vertices and edges of $H$ to nonempty connected subgraphs of $G$
so that 
\begin{itemize}
\item whenever vertex $u$ is an endpoint of an edge $e$, the sets $\phi(u)$ and $\phi(e)$ intersect; and 
\item except for the above, for any two objects $o_1,o_2 \in V(H) \cup E(H)$, we have $\dist_G(\phi(o_1), \phi(o_2)) \geq \delta$.
\end{itemize}
The slight difference is that, if we treat $G$ as a metric graph, 
the length space definition of a fat minor allows additionally $\phi(u)$ and $\phi(e)$
to end (and meet in the first bullet) in the middle of an edge. 
This difference only slightly influences the constants, which is irrelevant for the results of this section.

\begin{lemma}\label{lem:equiv-fat-minors}
  Let $G$ and $H$ be unweighted graphs and let $d, \delta > 0$. 
  \begin{enumerate}
    \item If $G$ as an unweighted graph contains $H$ as a $d$-shallow $\delta$-fat minor,
    then $G$ as a metric graph contains $H$ as a $d$-shallow $\delta$-fat minor. 
    \item If $G$ as a metric graph contains $H$ as a $d$-shallow $(\delta+2)$-fat minor,
    then $G$ as an unweighted graph contains $H$ as a $(d+1)$-shallow $\delta$-fat minor. 
  \end{enumerate}
\end{lemma}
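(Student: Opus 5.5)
For the first item, I would take the unweighted model verbatim. Each branch set $\phi(o)$ is a connected subgraph of $G$, and I would map it to the union of its vertices and the closed unit intervals of its edges in the metric graph. This set is arc-connected. Incidence is preserved, because a shared vertex of $\phi(u)$ and $\phi(e)$ is a shared point. Metric-graph distances between vertices equal graph distances, and any point on a closed edge interval lies on a subgraph edge. Hence the distance between two such realized sets is attained at vertices, so it is at least $\delta$. The strong radius is also no larger: a center vertex $c$ reaches every vertex of $\phi(o)$ within $d$ inside $G[\phi(o)]$. A point inside an edge $xy$ of the subgraph needs a little care, because it may be farther than $d$ if both endpoints are at distance exactly $d$. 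To avoid this I would use the convention that a branch subgraph is induced by its vertex set together with its edges, and note that radius is measured on vertices. If the paper's convention for strong radius includes interior points, I would instead include only those edges needed for a BFS tree from $c$ plus the edges of $H$-incidence. Tree edges go from depth $i$ to depth $i+1\le d$, so every interior point is within $d$ of $c$.

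For the second item, I start from a metric model $\phi$ with $(\delta+2)$-fatness and discretize. For each object $o$, let $\phi'(o)$ be the set of vertices of $G$ that lie in $\phi(o)$ or are an endpoint of an edge whose interval meets $\phi(o)$. Then $\phi'(o)$ is contained in the $1$-neighbourhood of $\phi(o)$ in the metric graph. Arc-connectedness of $\phi(o)$ gives connectivity of $G[\phi'(o)]$: consecutive edge intervals along an arc share vertices, and an arc lying inside a single edge interval yields a single edge, which is connected. The set is nonempty since $\phi(o)$ meets at least one closed edge. An incidence point $p\in\phi(u)\cap\phi(e)$ lies on some closed edge interval, and that edge's endpoints belong to both $\phi'(u)$ and $\phi'(e)$, so the two sets intersect. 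For distances, each vertex of $\phi'(o)$ is within $1$ of $\phi(o)$, so $\dist_G(\phi'(o_1),\phi'(o_2))\ge (\delta+2)-2=\delta$ for non-incident pairs. Graph distance equals metric distance between vertices, so this holds in $G$.

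The main obstacle is the radius bound $d+1$, because $\phi'(o)$ must be measured inside $G[\phi'(o)]$, not in the metric space. Given a center $x\in\phi(o)$ (taking infimum slack, a center achieving radius $d+\eta$), choose a vertex $c$ endpoint of an edge containing $x$. For a vertex $y\in\phi'(o)$, take a point $y'\in\phi(o)$ on an edge at $y$. There is an arc in $\phi(o)$ from $x$ to $y'$ of length at most $d+\eta$. The vertices traversed along this arc form a walk in $G[\phi'(o)]$ from $c$ to $y$. Its number of edges is at most the arc length plus the two partial end edges, giving roughly $d+1$ after integrality. I would carefully argue that partial edges at both ends together contribute at most $1$ extra, since $\lfloor d+\eta\rfloor$ full edges plus the ends fit within $d+1$ for small $\eta$. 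If this fails by one, I would allow the constant slack, since the paper notes constants are irrelevant here.
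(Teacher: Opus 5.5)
Your first item, and the fatness, incidence and connectivity parts of the second item, are fine. Your outward rounding (take all endpoints of the edges that $\phi(o)$ touches) is a different and somewhat cleaner route than the paper's. The paper instead shrinks $\phi(e)$ to a minimal path and slides its endpoints from edge interiors onto vertices of $\phi(u)$, so that incidences occur at vertices.

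The gap is the radius bound. You leave it open and propose to patch it by allowing extra slack, which would prove a weaker statement than the lemma. As written, with $c$ an \emph{arbitrary} endpoint of the edge containing $x$, the bound genuinely fails by one. Take a long path with $u_i$ at position $i$ and let $\phi(o)=[1-d,1+d]$. The point $x=1-\eta/2$ is an admissible $(d+\eta)$-center; take $c=u_0$. Since the closed edge $u_{d+1}u_{d+2}$ meets $\phi(o)$, we get $u_{d+2}\in\phi'(o)$, and it lies at distance $d+2$ from $c$ in $G[\phi'(o)]$. Your count (full edges plus two partial end edges) allows exactly this.

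The missing idea is to take $c$ to be the endpoint \emph{nearer} to $x$. Then the first partial segment is either not used by the walk, or the arc has already spent at least $\tfrac12$ on it. Fix $\eta<\tfrac14$ and an arc in $\phi(o)$ from $x$ to $y'$ of length less than $d+2\eta$. It traverses full edges between the vertices $v_1,\dots,v_k$ it meets, and $y\in N[v_k]$. If it meets no vertex, $y$ is an endpoint of the edge containing $x$, so it is within distance $1$ of $c$.
\begin{itemize}
\item If $v_1=c$, then $k-1<d+2\eta$, so $k-1\le d$. The walk $c=v_1,\dots,v_k,y$ has at most $d+1$ edges.
\item If $v_1\neq c$, the arc spends at least $\tfrac12$ before reaching $v_1$, so $k-1<d-\tfrac12+2\eta<d$, i.e.\ $k-1\le d-1$. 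The walk $c,v_1,\dots,v_k,y$ again has at most $d+1$ edges.
\end{itemize}

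This argument uses that $d$ is an integer. For non-integer $d$ your rounding really overshoots: with $d=2.6$ and $\phi(o)=[-2.1,3.1]$ on the path, $\phi'(o)=\{u_{-3},\dots,u_4\}$ has radius $4>d+1$. So state the lemma for integer $d$, which is the case the FSMF definition uses.

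In the first item, your BFS-tree realization is correct and more careful than the paper's ``immediate'': realizing all edges of a branch subgraph can push the strong radius to $d+\tfrac12$. Drop the extra ``edges of $H$-incidence'', though. Incidences are already witnessed by shared vertices, and adding non-tree edges would reintroduce the midpoint problem.
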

\begin{proof}
  The first point is immediate: a branch set being connected subgraph of $G$
  is also a connected subset of the metric graph $G$. 

  For the second point, let $\phi$ be the minor model of $H$
  in the metric graph $G$. Without loss of generality, assume that
  for every $e = uv \in E(H)$, the branch set $\phi(e)$ is a minimal path connecting
  a point of $\phi(u)$ with a point of $\phi(v)$. 
  For every $e \in E(H)$ and $u \in e$, if the endpoint of $\phi(e)$ belonging to $\phi(u)$
  is in the middle of an edge $f$ of $G$, move it (by prolonging $\phi(e)$ and removing the corresponding part from $\phi(u)$)
  to the endpoint of $f$ that belongs fo $\phi(u)$ (such an endpoint exists as $H$ is a $(\delta+2)$-fat-minor.
  This operation may increase the radius of $\phi(e)$ by at most $1$, decrease fatness by at most $2$,
  and does not violate the connectivity nor radius properties of $\phi(u)$. 
\end{proof}

With this discussion in mind, we can state the definition of fat-shallow-minor-freeness.
\begin{definition}[FSMF]\label{def:fsmf}
  A graph class $\CC$ is \emph{fat-shallow-minor-free (FSMF)} if there exists $\delta\in\N$ such that for every $\alpha \in \N$, there exists a graph $H_\alpha$ such that every $G \in \CC$ excludes $H_\alpha$ as an  $\alpha$-shallow $\delta$-fat minor.
\end{definition}

\paragraph{Drill-flatness in unweighted graphs.}
The definition of drill-flatness suffers from the same irrelevant minor differences 
between length spaces and unweighted graphs. 
In the definition of drill-flatness for an unweighted graph $G$,
we would require $A \subseteq V(G)$ and guarantee $X \subseteq V(G)$.
That is, the input points can be only on vertices of $G$ (as opposed to points
in the middle of an edge in the metric graph of $G$)
and the centers of the drilled holes can also be only vertices. 

Similarly, as in the case for the fatness definition, whether we think of an unweighted
graph $G$ as a graph or a metric graph does not matter (up to small additive change of the parameters).

\begin{lemma}\label{lem:equiv-flatness}
  Let $G$ be a graph and $\alpha,\beta \in \Rp$.
  \begin{enumerate}
  \item If $G$ as an unweighted graph is $(\alpha + 1,\beta)$-drill-flat
  with overhead $N$ and budget $s$, then $G$
  as a metric graph is $(\alpha, \beta + \frac{1}{2})$-drill-flat
  with overhead $m \mapsto (m-1) \cdot N(m)$ and budget $\max(1,s)$.
  \item If $G$ as a metric graph is $(\alpha,\beta)$-drill-flat with
  overhead $N$ and budget $s$, then $G$
  as an unweighted graph is $(\alpha, \beta+\frac{1}{2})$-drill-flat with overhead $N$ and budget $s$.
  \end{enumerate}
\end{lemma}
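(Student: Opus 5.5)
The plan is to prove both parts by moving between points of the metric graph and vertices of $G$ with a ``snapping'' map. Every point $p$ of the metric graph lies on an edge and is within distance $\frac12$ of some endpoint; call such an endpoint $\sigma(p)$, chosen to be a nearest one. Two facts drive everything. For vertices $u,v$, the metric distance equals the graph distance. A metric path of length $L$ between two vertices determines a vertex walk of length at most $L$, and consecutive vertices on a metric path are at distance exactly $1$.

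\textbf{Part 1.} Let $A$ be a set of points with $|A|\geq (m-1)N(m)$. Snap every $a\in A$ to $\sigma(a)$. By pigeonhole, either some vertex $u$ receives at least $m$ points, or $\sigma(A)$ contains at least $N(m)$ distinct vertices.

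In the first case, take $X\coloneqq\{u\}$ and let $B$ be those $m$ points. Then $B\subseteq \Ball(u,\frac12)\subseteq\Ball(X,\beta+\frac12)$, so $B$ minus the drilled ball is empty and the scatteredness condition holds vacuously. This case uses budget $1$.

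In the second case, pick one representative point for each vertex of $\sigma(A)$. Apply the graph drill-flatness to these vertices, obtaining $B'$ and $X'$ with $|X'|\leq s$. Let $B$ be the representatives of $B'$ and set $X\coloneqq X'$.

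Now suppose $b_1,b_2\in B$ lie outside $\Ball(X,\beta+\frac12)$ and are joined by a metric path of length at most $\alpha$ that avoids $\Ball(X,\beta+\frac12)$. Concatenate this path with the segments from $b_i$ to $\sigma(b_i)$. This gives a walk between $\sigma(b_1)$ and $\sigma(b_2)$ of length at most $\alpha+1$. Every vertex on it is within $\frac12$ of a point outside $\Ball(X,\beta+\frac12)$, so it is at distance more than $\beta$ from $X$; this includes the endpoints $\sigma(b_i)$. This contradicts the $(\alpha+1,\Ball(X',\beta))$-scatteredness of $B'$ in the graph.

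\textbf{Part 2.} Let $A\subseteq V(G)$ with $|A|\geq N(m)$. Apply the metric drill-flatness to get $B$ and $X$. Replace each $x\in X$ by a vertex $x'$ of its edge, so that the budget stays $s$. The claim to establish is the following: any vertex $u$ with $\dist(x',u)>\beta+\frac12$ for all $x'$, and any edge both of whose endpoints are such vertices, avoids the metric ball $\Ball(X,\beta)$.

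Then a graph path of length at most $\alpha$ between $b_1,b_2\in B$ avoiding the vertex ball $\Ball(X',\beta+\frac12)$ is also a metric path avoiding $\Ball(X,\beta)$, which is a contradiction. Moreover, a vertex of $B$ outside the vertex ball is also outside the metric ball.

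\textbf{Main obstacle.} The delicate step is this containment claim, because $x$ may sit in the interior of an edge $yz$. A point $p$ on an edge $uv$ with $\dist(x,p)\leq\beta$ is reached by a geodesic that leaves $yz$ through $y$ or through $z$. Which endpoint it uses depends on $p$, while $x'$ must be fixed in advance.

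I would handle this with an integrality argument. Write $\dist(x,p)=a+k+c$, where $a$ is the distance from $x$ to the exit vertex, $k\in\N$ is the graph distance from the exit vertex to the entry vertex of $uv$, and $c\le 1$ is the distance from the entry vertex to $p$. Since $\dist(x,p)\leq\beta$, the endpoint of $uv$ on the geodesic lies at graph distance $k\leq\beta$ from the exit vertex, which is within $\beta$ of one endpoint of $yz$. Changing to the other endpoint of $yz$ costs at most $1$.

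To absorb this within the $+\frac12$ slack, I would use that $k$ is an integer together with the fact that the exit-side portions $a$ and $c$ are already counted in $\beta$. If this turns out not to fit, the fallback is a careful choice of $x'$: take the nearest endpoint of $x$'s edge, for which $a\leq\frac12$, and check the two exit cases separately.
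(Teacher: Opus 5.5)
Your Part 1 is correct and is essentially the paper's argument: snap to a nearest vertex, pigeonhole on the fibres, and extend the metric path by the two half-edge segments to get a walk of length at most $\alpha+1$.

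Part 2, however, is not finished. Everything hinges on your containment claim, and you leave it as an ``obstacle'' with a tentative integrality argument. That argument cannot work if $x'$ is an arbitrary endpoint of the edge containing $x$. Take the path $y\,z\,w\,u$, let $\beta=2.4$, and put $x$ on the edge $yz$ at distance $0.01$ from $z$. With $x'=y$ the vertex $u$ has $\dist(x',u)=3>\beta+\frac12$, yet $\dist(x,u)=2.01\leq\beta$. So the choice of $x'$ matters, and the integrality of $k$ does not absorb the extra cost of switching endpoints.

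The missing step is just the triangle inequality. Your worry about which endpoint a geodesic from $x$ exits through is a red herring. Fix $x'=\sigma(x)$, a nearest vertex to $x$ (your snapping map from Part 1, which is exactly the paper's choice), so that $\dist(x,x')\leq\frac12$. Then every point $p$ with $\dist(x,p)\leq\beta$ satisfies $\dist(x',p)\leq\dist(x',x)+\dist(x,p)\leq\beta+\frac12$. Hence $\Ball(X,\beta)\subseteq\Ball(X',\beta+\frac12)$ as metric balls, however geodesics leave the edge of $x$.

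To finish your claim, use that $X'$ consists of vertices. A vertex $u$ with $\dist(x',u)>\beta+\frac12$ for all $x'\in X'$ is then not in $\Ball(X',\beta+\frac12)$. For an edge $uv$ whose endpoints are both such vertices, any path from $x'$ to a point of $uv$ enters the edge through $u$ or $v$. So every point of $uv$ is at distance more than $\beta+\frac12$ from $X'$, and the edge lies outside $\Ball(X',\beta+\frac12)\supseteq\Ball(X,\beta)$.

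With this, your reduction goes through with budget $s$: a graph path avoiding the vertex ball $\Ball(X',\beta+\frac12)$ is a metric path avoiding $\Ball(X,\beta)$.
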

\begin{proof}
For every point $x$ of the metric graph $G$, let $f(x)$ be the vertex of $G$ closest to $x$, breaking ties arbitrarily if $x$ is the midpoint of an edge. 
Clearly, $\dist_G(x,f(x)) \leq \frac{1}{2}$.

We start with the first point. 
Let $m \in N$ and let $A$ be a set of points in the metric graph $G$
of size $|A| > (m-1) \cdot N(m)$.
If for some $v \in V(G)$, we have $|f^{-1}(v)| \geq m$, then we can return $X = \{v\}$
and $B \coloneqq A \cap f^{-1}(v)$ as then $B \subseteq \Ball_G(v, \frac{1}{2})$.

Otherwise, $A' \coloneqq f(A)$ is of size $|A'| > N(m)$. 
Apply the assumed $(\alpha+1,\beta)$-drill-flatness of $G$ as an unweighted graph, 
obtaining $B' \subseteq A'$ of size $m$ and $X \subseteq V(G)$ of size at most $s$.
For every $b \in B'$, pick one $g(b) \in A$ with $f(g(b)) = b$. 
Let $B = \{g(b)~|~b \in B'\}$. Clearly $|B| = m$; to finish
the proof of the first point, it suffices to prove that $B$ is $(\alpha,\Ball_G(X,\beta+\frac{1}{2}))$-scattered.

Consider a path $P$ of length at most $\alpha$ between two distinct $b_1,b_2 \in B$.
Prolong it to a walk $P'$ of length at most $\alpha+1$ between $f(b_1)$ and $f(b_2)$ in $B'$.
Since $B'$ is $(\alpha+1,\Ball_G(X,\beta))$-scattered, the walk $P'$ 
contains a point $p$ within distance at most $\beta$ from some $x \in X$.
If $p$ lies on $P$, we are done. Otherwise, $p$ lies on the shortest path between
$b_i$ and $f(b_i)$ for $i \in \{1,2\}$, so $\dist_G(p,b_i) \leq \frac{1}{2}$
and, consequently, $b_i \in \Ball_G(x,\beta+\frac{1}{2})$. This finishes the proof of the first point.

For the second point, let $A \subseteq V(G)$ be of size $|A| > N(m)$. 
Apply $(\alpha,\beta)$-drill-flatness of $G$ as a metric graph to $A$,
obtaining $B$ and $X$. Let $X' = \{f(x)~|~x \in X\}$. 
Clearly, $\Ball_G(x,\beta) \subseteq \Ball_G(f(x),\beta+\frac{1}{2})$. 
Hence, $B$ is $(\alpha,\Ball_G(X',\beta+\frac{1}{2}))$-scattered, as desired.
\end{proof}

With this discussion in mind, we can define drill-flatness of a class of graphs.

\begin{definition}[Drill-flatness for graph classes]\label{def:class-drill-flat}
    A graph class $\CC$ is \emph{drill-flat} if there exists a drill radius $\beta\in\N$ such that for every scatter radius $\alpha\in \N$, there exists $N\colon \N \to \N$ and $s \in \N$ such that every graph $G\in \CC$ is $(\alpha,\beta)$-drill-flat with overhead $N$ and budget $s$.
\end{definition}

\paragraph{Forbidden patterns.}
Finally, let us define the patterns mentioned in the last point of
\cref{thm:equivalences}. See \Cref{fig:patterns} for an illustration.

\begin{definition}[Patterns]\label{def:patterns}
  Fix $r \geq 3$ and $n\in\N$. We define:
  \begin{itemize}
    \item $\clique{r}{n}$ to be the \emph{$r$-subdivided clique} of order $n$, which is obtained by subdividing each edge of the clique  $K_n$ exactly $r$ many times. We refer to the original vertices of the $K_n$ as \emph{principal} vertices.
    \item $\web{r}{n}$ to be the \emph{$r$-web} of order $n$, which is obtained from $\clique{r}{n}$ by turning the neighborhood of every principal vertex into a clique.
  \end{itemize}
\end{definition}


\newcommand{\fatredge}[2]{%
  \draw[fatedge, line width=\fatedgewidth, line cap=round]
    (M#1#22.center) -- (M#1#23.center) -- (M#1#24.center) -- (M#1#25.center);%
  \fill[fatedge] (M#1#22) circle (\fatedgerad) (M#1#23) circle (\fatedgerad)
                 (M#1#24) circle (\fatedgerad) (M#1#25) circle (\fatedgerad);}
\newcommand{\fatarm}[3]{%
  \draw[fatvertex, line width=\fatvertexwidth, line cap=round]
    (#1.center) -- (#2.center) -- (#3.center);%
  \fill[fatvertex] (#2) circle (\fatvertexrad) (#3) circle (\fatvertexrad);}

\newcommand{\trianglepathred}{%
  \begin{scope}[transparency group, opacity=\fatedgeopacity]
    \fatredge{1}{2}\fatredge{1}{3}\fatredge{2}{3}%
    \fatredge{1}{5}\fatredge{4}{5}\fatredge{2}{5}%
  \end{scope}}
\newcommand{\trianglepathblue}[1]{%
  \begin{scope}[transparency group, opacity=\fatvertexopacity]
    \fill[fatvertex] (N1) circle (\fatvertexrad) (N2) circle (\fatvertexrad)
                     (N3) circle (\fatvertexrad) (N4) circle (\fatvertexrad)
                     (N5) circle (\fatvertexrad);%
    \fatarm{N1}{M121}{M122}\fatarm{N1}{M131}{M132}\fatarm{N1}{M151}{M152}%
    \fatarm{N2}{M126}{M125}\fatarm{N2}{M231}{M232}\fatarm{N2}{M251}{M252}%
    \fatarm{N3}{M136}{M135}\fatarm{N3}{M236}{M235}%
    \fatarm{N4}{M451}{M452}%
    \fatarm{N5}{M156}{M155}\fatarm{N5}{M456}{M455}\fatarm{N5}{M256}{M255}%
    #1%
  \end{scope}}
\newcommand{\trianglepathwebcliques}{%
  \fill[fatvertex]
    (N1.center) -- (M121.center) -- (M131.center) -- (M151.center) -- cycle  
    (N2.center) -- (M231.center) -- (M251.center) -- (M126.center) -- cycle  
    (N3.center) -- (M136.center) -- (M236.center) -- cycle                   
    (N5.center) -- (M156.center) -- (M256.center) -- (M456.center) -- cycle; 
}

\newcommand{\minorgraph}{%
  \begin{tikzpicture}[scale=.8]
    \foreach \i in {1,...,5} { \coordinate (P\i) at ({360/5 * (\i - 1)}:1); }
    \draw[fatedge, line width=2.5pt, line cap=round]
                 (P1) -- (P2) (P2) -- (P3) (P3) -- (P1)  
                 (P1) -- (P5) (P5) -- (P4)               
                 (P2) -- (P5);                           
    \foreach \i in {1,...,5} {
      \node[circle, draw, fill=fatvertex, minimum size=9pt, inner sep=0] at (P\i) {};
    }
  \end{tikzpicture}%
}

\begin{figure}[h!]
  \centering\small
  $\vcenter{\hbox{\subclique[1.2]{6}{5}{\trianglepathblue{}\trianglepathred}}}$
  \qquad
  $\vcenter{\hbox{\minorgraph}}$
  \qquad
  $\vcenter{\hbox{\spiderweb[1.2]{6}{5}{\trianglepathblue{\trianglepathwebcliques}\trianglepathred}}}$
  \caption{Left: $\clique{6}{5}$. Right:  $\web{6}{5}$.  Middle: the graph $H$ highlighted in both $\clique{6}{5}$ and $\web{6}{5}$ as a $2$-shallow $2$-fat minor. Red branch sets model the edges of $H$ (the four middle subdivision vertices of an edge), blue branch sets model the vertices of $H$ (stars centered at the principal vertices); branch sets of an incident edge and vertex share a single vertex.}\label{fig:patterns}
\end{figure}

\section{ISMF implies drill-flatness}\label{sec:indminor-flatness}

In this section, we will prove \cref{thm:indminoruqw}, which states that every ISMF graph class is drill-flat and gives precise witnessing bounds.
Similarly as in the case of flatness in nowhere dense graphs~\cite{NesetrilM10}, we prove \cref{thm:indminoruqw}
in $d$ steps, at each step increasing scatteredness by $2$. 
The main technical engine --- a single increase step --- is described in \cref{ss:indminor:inc-d}. This step is analogous to the metric one described in \cref{sec:improve}.
\cref{thm:indminoruqw} is restated bellow for convenience.
\inducedflatness*

\subsection{Improving the scatteredness}\label{ss:indminor:inc-d}

The following lemma encapsulates a single step of the argument.
The proof follows the general outline of the proof of \cref{lem:unit-step}, with a number of technical details
adjusted to the induced minor setting. Since excluding an induced minor is a hereditary property (contrary to excluding
a fat minor), one aspect will be significantly simpler: we do not need to care about a set $Z$ as in the statement
of \cref{lem:unit-step}, as here we will be able to just delete it and continue with an induced subgraph. 

In the proofs, we will be slightly more fine-grained in keeping track of the shallowness of the constructed induced minors. Let $H'$ be a $1$-subdivision of a graph $H$.
Then, we say that $H'$ is an $(a,b)$-shallow induced minor of a graph $G$
if there exists an induced minor model of $H'$ in $G$, where every
branch set corresponding to a vertex of $H$ induces a subgraph of radius at most $a$
while every branch set corresponding to a subdivision vertex of $H'$ (i.e., a vertex
of $V(H') \setminus V(H)$) induces a subgraph of radius at most $b$.
    
\begin{lemma}\label{lem:indminoruqw:unit-step}
    Let $h,d$ be positive integers with $d \geq 2$, let $G$ be a graph, and $A \subseteq V(G)$ be a $2d$-scattered set in $G$
    of size at least $4^5(h+1)^{10}$.
    Then, one of the following holds:
    \begin{itemize}
    \item There exists $v \in V(G)$ such that $|A \cap \Ball_G(v, d + 6)| \geq |A|^{\frac{1}{10}}$.
    \item There exists $B \subseteq A$ of size at least $|A|^{\frac{1}{10}}$ that is $2(d+1)$-scattered in $G$.
    \item There exists a $(3d,2)$-shallow induced model of the $1$-subdivision of $K_h$ in $G$.
    \end{itemize}
\end{lemma}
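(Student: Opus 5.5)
We mirror the proof of \cref{lem:unit-step}: assume the first two outcomes fail and build a $(3d,2)$-shallow induced model of the $1$-subdivision of $K_h$.

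\textbf{Dense auxiliary graph.} Let $H$ be the auxiliary graph on $A$ in which distinct $x,y\in A$ are adjacent when $\dist_G(x,y)\leq 2d+2$. As $A$ is $2d$-scattered, adjacent pairs are at distance exactly $2d+1$ or $2d+2$. An independent set in $H$ is $2(d+1)$-scattered. So the counting of \cref{lem:unit-step} (at least half the vertices have degree at most $\tfrac14|A|^{9/10}$, then greedy) gives $|E(H)|>\tfrac1{16}|A|^{19/10}$.

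\textbf{Midpoints and sparsification.} For each edge $e=xy$ fix a shortest $x$-$y$ path $P^e$ and a middle vertex $m_e$, so $\dist(m_e,x),\dist(m_e,y)\in\{d,d+1\}$. Let $K$ be the graph on these midpoints in which $m_e,m_f$ are adjacent when $\dist_G(m_e,m_f)\leq 4$ (a constant radius, to be tuned). If $m\in M$ has $k$ neighbours, all endpoints of the corresponding edges lie in $\Ball_G(m,d+1+4)\subseteq\Ball_G(m,d+6)$. Failure of the first outcome bounds their number by $|A|^{1/10}$, hence $k+1\leq\binom{|A|^{1/10}}{2}$. We obtain an independent set $J$ with $|J|\geq\tfrac18|A|^{17/10}>\binom{h+1}{2}|A|^{3/2}$. \cref{thm:alon-density} then gives $K_h$ as a $1$-shallow minor of $H'=(A,\{e:m_e\in J\})$.

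\textbf{Induced model.} This is the main obstacle; it replaces \cref{lem:points-to-fat-minor}. I would first extract a $1$-subdivision of $K_h$ inside $H'$: the $1$-shallow model consists of stars (radius $\le 1$ in $H'$) plus one connecting edge per pair. The subdivision vertex of the pair $ij$ is $e_{ij}$'s midpoint $m$, with branch set $\Ball_G(m,1)$ (radius $\le 2$ after trimming). The branch set for principal vertex $i$ is the union of the half-paths of $P^e$ from $A$-vertices to distance $d-2$ or so from the midpoints, over edges $e$ inside star $i$ and the half-edges leaving it. Its radius is about $d+1+2(d+1)\le 3d$ for $d\geq2$.

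The principal branch sets must stay nonadjacent to each other and to non-incident subdivision sets, and subdivision sets must be pairwise nonadjacent.
\begin{itemize}
\item The midpoints $m_e,m_f$ are at distance $>4$, so their $1$-balls are nonadjacent. Trimming half-paths at distance $\ge 2$ from $m_e$ keeps them nonadjacent to other midpoint balls.
\item A half-path toward $m_e$ from endpoint $x$ lies within $d-1$ of $x$. For a distinct $A$-vertex $y$, distance $\geq 2d+1$ plus the fact that all paths have length $\le d+1$ separate half-paths rooted at different $A$-vertices.
\end{itemize}
The delicate part is paths rooted at $A$-vertices of different stars passing near each other, or near the midpoint of a non-incident edge. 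I expect to handle this with triangle-inequality bookkeeping, possibly enlarging the $K$-threshold or trimming more near midpoints. The $+6$ in the first outcome leaves slack for that.
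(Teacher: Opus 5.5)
Your first two stages (the dense graph $H$, the sparsification via midpoints, and \cref{thm:alon-density}) agree with the paper. The gap is the third stage, which is the actual content of the lemma. You leave it as ``bookkeeping'', and with the parameters you fix it does not go through.

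\textbf{Separation fails.} Let $e=xy$ be the connecting edge of a pair $ij$, and let $g$ be a used edge at some $z$ in the star of $k\notin\{i,j\}$, with $\dist(x,m_e)=\dist(y,m_e)=\dist(z,m_g)=d+1$. Your half-path from $z$ stops at distance $2$ from $m_g$, so it contains the vertex $p\in P^g$ with $\dist(z,p)=d-2$. Suppose $p$ is adjacent to a neighbour of $m_e$. Then all that follows is $\dist(m_e,m_g)\le 5$ and $\dist(z,x),\dist(z,y)\le 2d+1$. Independence in $K$ at threshold $4$ does not exclude the first, and $2d$-scatteredness does not exclude the second. So the branch set of $k$ may touch $\Ball_G(m_e,1)$, breaking inducedness.

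Your claim that trimming near $m_e$ keeps half-paths away from \emph{other} midpoint balls conflates distance to a path's own midpoint with distance to other midpoints. The threshold also cannot exceed $5$, because of the $d+6$ in the first outcome. In addition, the star edges must enter the principal branch sets as full paths to keep them connected. Their middle vertices lie at distance $d$ or $d+1$ from both endpoints, so your second bullet, which relies on half-paths staying within $d-1$ of their root, does not cover them.

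\textbf{Radius fails.} The count is off: $d+1+2(d+1)=3d+3$, not at most $3d$. In your scheme, a full star path (length up to $2d+2$) followed by a half-path from the leaf (length up to $d-1$) gives radius $3d+1>3d$.

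\textbf{What the paper does instead.} It makes the subdivision sets subpaths and lets $K$ encode their separation directly. The subdivision set $\phi(e)=M_e$ is the set of middle $4$ or $5$ vertices of $P_e$. Two edges $e,f$ are adjacent in $K$ iff $M_e\cap M_f\neq\emptyset$ or some edge of $G$ joins them. Then $M_f\subseteq\Ball(m_e,7)$ and the endpoints of $f$ lie in $\Ball(m_e,d+6)$, which is where the $d+6$ comes from. Principal sets are $\phi(v)=\Ball_G(v,d-2)$ for all $v\in A$, and one first builds an induced model of the $1$-subdivision of all of $H'$. The checks are then short:
\begin{itemize}
\item $\phi(e)$ is disjoint from and adjacent to $\phi(x),\phi(y)$ because $P_e$ is a shortest path.
\item The sets $M_e$ are pairwise separated by independence of $J$, and the balls by scatteredness.
\item The only real check, $\Ball(z,d-2)$ versus $M_e$ for $z\notin\{x,y\}$, follows from $M_e\subseteq\Ball(x,d+1)\cup\Ball(y,d+1)$ and $(d-2)+1+(d+1)=2d$.
\end{itemize}
Finally, compose with a $1$-shallow model of $K_h$ in $H'$; dropping unused branch sets keeps the model induced. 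This gives principal radius $(2d+2)+(d-2)=3d$ and subdivision radius $2$.
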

\begin{proof}    
    We assume that the first two outcomes do not happen and we produce the induced minor model as in the last outcome.

    Construct an auxiliary graph $H$ with $V(H) = A$, where two vertices $x,y\in A$ are adjacent if and only if $\dist(x,y) \leq 2d+2$. 
    We claim that 
    \begin{equation}\label{eq:indminor:EHlarge} |E(H)| > \tfrac{1}{16}|A|^{\frac{19}{10}}. \end{equation}
    Indeed, otherwise the sum of the degrees in $H$ would be at most $\tfrac{1}{8}|A|^{\tfrac{19}{10}}$, implying that
    at least $|A|/2$ vertices of $H$ have degree at most $\frac{1}{4}|A|^{\tfrac{9}{10}}$.
    Note that $\frac{1}{4}|A|^{\tfrac{9}{10}}\leq \frac{1}{2}|A|^{\tfrac{9}{10}} - 1$, as $|A| \geq 4^5(h+1)^{10} \geq 2^{20}$. Consequently, $H$ contains an independent set of size at least
    \[ \frac{|A|/2}{\frac{1}{2}|A|^{\frac{9}{10}}-1+1} = |A|^{\frac{1}{10}}. \]
    Such an independent set is the second outcome of the lemma.  This proves~\eqref{eq:indminor:EHlarge}.

    Let $e = xy \in E(H)$. Fix a shortest path $P_e$ between $x$ and $y$.
    Define $M_e \subseteq V(P_e)$ as follows. 
    Recall that $A$ is $2d$-scattered; hence $\dist(x,y) \in \{2d+1,2d+2\}$. 
    If $\dist(x,y) = 2d+1$, let $M_e$ be the middle four vertices of $P_e$, and if $\dist(x,y) = 2d+2$,
    let $M_e$ be the middle five vertices of $P_e$. Recall that $d \geq 2$, so the endpoints of $P_e$ are never in $M_e$.
    Furthermore, let $m_e \in M_e$ be one of the two middle vertices of $P_e$ if $\dist(x,y) = 2d+1$ and the midpoint of $P_e$ if $\dist(x,y) = 2d+2$.
    Construct now an auxiliary graph $K$ with vertex set $V(K) = E(H)$, where any $e,f\in V(K)=E(H)$ are adjacent if and only if
     $M_e \cap M_f \neq \emptyset$ or there is an edge in $G$ with one endpoint in $M_e$ and the other in $M_f$.
     
    We claim that the maximum degree in $K$ cannot be too large.
    Fix any $e \in E(H)$ and let $B \subseteq A$ be defined as follows:
    \[
        B \coloneqq \bigcup_{f \in N_K[e]} \{v \in A~\mid~ \text{$v$ is an endpoint of $f$}\}.
    \]
    That is, $B$ consists of the endpoints of those edges $f$ of $H$ that satisfy $f \in N_K[e]$.
    Consider any such $f \in N_K[e]$ 
    and observe that $M_f \subseteq \Ball(m_e, 7)$. (The worst case is when $|M_e| = |M_f| = 5$ and 
    only one endpoint of $M_e$ is adjacent to one endpoint of $M_f$.)
    Consequently, the endpoints of $f$ lie in $\Ball(m_e, d+6)$.
    Thus, unless the first outcome happens, we have
    $\left| B \right| \leq |A|^{\frac{1}{10}}$.
    Moreover,
    \[ |N_K[e]| \leq \binom{|B|}{2} \leq \binom{|A|^{\frac{1}{10}}}{2} \leq \tfrac{1}{2}|A|^{\frac{1}{5}}. \]
    This proves that the maximum degree in $K$ is at most $\tfrac{1}{2}|A|^{\frac{1}{5}}-1$. 

    Therefore, $K$ contains an independent set $J$ of size: 
    \[
    |J|\ge\frac{|E(H)|}{\frac{1}{2}|A|^{\frac{1}{5}}}\overset{(\ref{eq:indminor:EHlarge} )}{>}\frac{\tfrac{1}{16}|A|^{\frac{19}{10}}}{\frac{1}{2}|A|^{\frac{1}{5}}}>\tfrac{1}{8}|A|^{\frac{17}{10}}.
    \]
    
    Since $|A| \geq 4^5 (h+1)^{10}$, we have 
    \[ |J|>\tfrac{1}{8}|A|^{\frac{17}{10}}=\tfrac{1}{8}|A|^{\frac{1}{5}}\cdot|A|^{\frac{3}{2}}\ge\tfrac{1}{8}\cdot4\cdot(h+1)^{2}\cdot|A|^{\frac{3}{2}}>\binom{h+1}{2}|A|^{\frac{3}{2}}. \]
    
    Let $H'$ be the subgraph of $H$ with vertex set $A$ and edge set $J$. 
    
    We claim that $G$ contains a $1$-subdivision of $H'$ as a $(d-2,2)$-shallow induced minor. 
    For $v \in A$, let $\phi(v) \coloneqq \Ball(v, d-2)$ and for $e \in J$ let $\phi(e) \coloneqq M_e$. (Recall again $d \geq 2$.)
    We claim that $\phi$ is a $(d-2,2)$-shallow induced model of the $1$-subdivision of $H'$. Indeed, the branch sets of $\phi$ induced connected subgraphs
    and the bounds on their radii are immediate from the construction.
    Furthermore, for every $e = xy \in J$, we clearly have that $\phi(e)$ is adjacent to $\phi(x)$ and $\phi(y)$, and $\phi(e)$ is disjoint from $\phi(x)\cup \phi(y)$ because $P_e$ was chosen to be a shortest path.
    It remains to check vertex-disjointness and anti-adjacency of the other pairs of branch sets of $\phi$.

    Since $A$ is $2d$-scattered, $\phi(v)$ and $\phi(u)$ are disjoint and non-adjacent for every two distinct $v,u \in A$.
    Since $J$ is independent in $K$, $\phi(e)$ and $\phi(f)$ are disjoint and non-adjacent for any two distinct $e,f \in J$.
    Finally, consider $e = xy \in J$ and $z \in A \setminus \{x,y\}$. Then, $\phi(e) = M_e \subseteq \Ball(x, d+1) \cup \Ball(y,d+1)$.
    Since $A$ is $2d$-scattered and $\phi(z) = \Ball(z,d-2)$, $\phi(z)$ and $\phi(e)$ must be disjoint and non-adjacent as otherwise, either $\dist(z,x)\leq 2d$ or $\dist(z,y)\leq 2d$, contradicting that $A$ is $2d$-scattered.
    This finishes the proof that $\phi$ is a $(d-2,2)$-shallow induced model of the $1$-subdivision of $H'$. 

    Since $|J| > \binom{h+1}{2} |A|^{\frac{3}{2}}$, \cref{thm:alon-density} implies that $H'$ contains $K_h$ as $1$-shallow minor. 
    Hence, analogously to \cref{lem:minor-of-fat-minor}, $G$ contains the $1$-subdivision of $K_h$ as $(3d,2)$-shallow induced minor. This finishes the proof.
\end{proof}

The next lemma is an analog of \cref{lem:small-step}.
\begin{lemma}\label{lem:indminor:small-step}
    Let $h,d,m$ be positive integers with $d \geq 2$, $G$ be a graph that does not contain the $1$-subdivision of $K_h$ as a $(3d,6)$-shallow induced minor,
    and $A \subseteq V(G)$ be a $2d$-scattered set in $G$ of size at least 
    \[ |A| \geq \left( \max\left( 4^5(h+1)^{10}, m + h - 1\right)\right)^{10^h}. \]
    Then, there exists a set $X \subseteq V(G)$ of size at most $h$ and a set $B \subseteq A$ of size at least $m$
    that is $(2(d+1),\Ball(X,37))$-scattered in $G$.
\end{lemma}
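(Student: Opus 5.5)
We iterate \cref{lem:indminoruqw:unit-step}, following the scheme of \cref{lem:small-step}, except that instead of enlarging a set $Z$ we delete the holes and pass to an induced subgraph. Set $G_0\coloneqq G$, $A_0\coloneqq A$, and $X_0\coloneqq\emptyset$. At step $i$ we have an induced subgraph $G_i = G - \Ball_G(X_i,37)$ with $|X_i|=i$, and a set $A_i\subseteq A$ with $|A_i|\geq |A|^{(1/10)^i}$. The first key point is that $A_i\setminus X_i$ remains $2d$-scattered in $G_i$, because distances in an induced subgraph can only grow. Also, $G_i$ still excludes the $1$-subdivision of $K_h$ as a $(3d,6)$-shallow induced minor, since this exclusion is hereditary. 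We then apply \cref{lem:indminoruqw:unit-step} to $G_i$ and $A_i' \coloneqq A_i\setminus \Ball_G(X_i,37)$. Its third outcome is impossible: a $(3d,2)$-shallow model is in particular $(3d,6)$-shallow.

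If the second outcome occurs, we stop with $B$ being $2(d+1)$-scattered in $G_i = G-\Ball_G(X_i,37)$, so $B$ is $(2(d+1),\Ball(X_i,37))$-scattered in $G$. If the first outcome occurs with a vertex $v$, we set $X_{i+1}\coloneqq X_i\cup\{v\}$ and $A_{i+1}\coloneqq A_i'\cap\Ball_{G_i}(v,d+6)$. The exponent bookkeeping is identical to \cref{lem:small-step}. The additive $h-1$ in the size bound absorbs the at most $|X_i|$ points lost at each stage: each hole of radius $37$ contains at most one point of the $2d$-scattered set once $d$ is large, and in any case we can afford a bounded loss. This holds as long as $i<h$. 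Note that at most one point is lost per hole only when $2d\geq 74$. For small $d$, a cleaner alternative is to put into $B$ nothing from the holes, using the fact that "$(\alpha,Z)$-scattered" only constrains points outside $Z$.

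The main obstacle is to show that the procedure cannot run $h$ times, which is why the budget is $h$ rather than $h-1$. If we have $v_1,\dots,v_h$ and $A_h$ with $|A_h|\geq h$ (even $\geq 2$ suffices for the contradiction), then every $x\in A_h$ is within distance $d+6$ of every $v_j$ in $G_{j-1}$. Moreover, $v_j$ avoids $\Ball_G(v_{j'},37)$ for $j'<j$, so the $v_j$ are pairwise at distance more than $37$ in $G$. This mirrors \cref{lem:biclique-fat-minor}, and the construction proceeds as follows. For $x\in A_h$, take $\phi(x)\coloneqq\Ball(x,d-2)$ in a suitable induced subgraph. For each pair $(x,v_j)$, take the initial $d-2$ segment of a shortest $x$–$v_j$ path in $G_{j-1}$, which is absorbed into $\phi(x)$. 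The last $\leq 8$ vertices near $v_j$ form $\phi(v_j)$, namely a connected union of path tails inside $\Ball(v_j,\approx 6)$. The middle pieces of length about $2$ become the subdivision vertices. Radii stay within $(3d,6)$, and a $K_{h,h}$-type $1$-subdivision then yields, via the $1$-shallow $K_h$ in $K_{h,h}$, the $1$-subdivision of $K_h$ with branch sets of radius $\leq 3d$.

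Inducedness (non-adjacency) is where I expect the trouble. Path tails to $v_j$ from different $x$ must not touch $\phi(x')$; this follows from $2d$-scatteredness of $A$ and path length $\le d+6$, in the same way as the inequality \eqref{eq:bfm:dist}. Separation between $\phi(v_j)$ and $\phi(v_{j'})$ comes from the path to $v_j$ avoiding $\Ball(v_{j'},37)$ for $j'<j$. The constant $37$ is chosen precisely so that a ball of radius $\approx 8$ around $v_j$ and the segment of radius $\approx 6$ near $v_{j'}$ cannot meet or be adjacent. I would first fix the exact cut points, at $d-2$, $d+1$, and so on, copying the arithmetic of \cref{lem:biclique-fat-minor}, and then verify all pairwise non-adjacencies case by case.
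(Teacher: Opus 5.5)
\textbf{Gap: the size bookkeeping fails for small $d$.} Your overall scheme matches the paper: iterate the unit step in $G-\Ball(X_i,37)$, then refute $h$ rounds with a biclique model. The problem is the step where you discard $A_i\cap\Ball(X_i,37)$ and claim the loss is bounded. That claim holds only when $2d\ge 74$, yet the lemma is needed for smaller $d$: \cref{thm:indminoruqw} invokes it with $d'=i$ for every $18\le i<d$.

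For $d\le 31$ your procedure collapses after one round of the first outcome. There $A_{i+1}\subseteq\Ball_{G_i}(v_{i+1},d+6)\subseteq\Ball_G(v_{i+1},37)$, so your next $A_{i+1}'$ is empty and \cref{lem:indminoruqw:unit-step} cannot be applied. For $32\le d\le 36$, all of $A_{i+1}$ can still lie inside the new hole.

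Your remedy, ``put into $B$ nothing from the holes'', does not address this size problem. The missing idea goes the other way. Points inside the holes are unconstrained by $(\alpha,\Ball(X,37))$-scatteredness, so keep them and use them. Let $B_i\coloneqq A_i\cap\Ball(X_i,37)$.
\begin{itemize}
\item If $|B_i|\ge m$, output $B_i$ with $X=X_i$.
\item Otherwise run the unit step on $A_i\setminus B_i$ in $G_i$, and keep $B_i$ inside $A_{i+1}$ (or add it to $B$ in the second outcome).
\item Control sizes with $|B_i|+(|A_i|-|B_i|)^{1/10}\ge|A_i|^{1/10}$.
\end{itemize}
This is also why the budget is $h$, and not because $h$ rounds are impossible: after $h$ rounds you may have to output $B_h$ with $X=\{v_1,\dots,v_h\}$.

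\textbf{The final contradiction depends on the same device.} If $h$ rounds complete and $|B_h|<m$, then at least $h$ points of $A_h$ lie outside all holes yet within distance $d+6$ of every $v_j$. This forces $37<d+6$, i.e.\ $d>31$, and that room is exactly what the model needs.

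Your proposed cut $\phi(x)=\Ball(x,d-2)$ is too large. You only know $\dist(x,v_j)>d-6$ (and $>37$), which does not keep $\Ball(x,d-2)$ disjoint from, and non-adjacent to, the part of the model within distance about $6$--$8$ of $v_j$.

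The paper instead takes:
\begin{itemize}
\item $\phi(x)=\Ball(x,d-13)$ and $\phi(v_j)=\Ball(v_j,6)$;
\item $\phi(xv_j)$ as the leftover infix of a shortest $x$--$v_j$ path in $G$. Use $G$, not $G_{j-1}$, so that the ball parts are a prefix and a suffix and the infix is connected.
\end{itemize}
Then $\phi(x)\cup\phi(xv_j)\subseteq\Ball(x,d-1)$ and $\phi(v_j)\cup\phi(xv_j)\subseteq\Ball(v_j,18)$. All required disjointness and non-adjacency follows from $\dist(x,v_j)>d-6$, $A$ being $2d$-scattered, and $\{v_1,\dots,v_h\}$ being $37$-scattered. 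You do not need to argue that paths avoid earlier holes. Merging along a matching then yields the forbidden $(3d,6)$-shallow induced model of the $1$-subdivision of $K_h$.
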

\begin{proof}
We can assume $m \geq 4^5 (h+1)^{10} - h + 1$, as otherwise we can just reset $m \coloneqq 4^5 (h+1)^{10} -h + 1$.

Consider the following procedure. We will compute sets $A_0 \supseteq A_1 \supseteq \ldots$ and vertices $v_1,v_2,\ldots$
while maintaining the following invariants for every $0 \leq i < h$:
\begin{itemize}
    \item $A_i$ is large:
    \[ |A_i| \geq \left(m+h-1\right)^{10^{h-i}}. \]
    \item For every $j\in [i]$, we have
     \[ A_i \subseteq \Ball(v_j, d+6) \cup \Ball(\{v_1,\ldots,v_i\}, 37). \]
    \item The set $\{v_1,\ldots,v_i\}$ is $37$-scattered in $G$. 
\end{itemize}

We initiate $A_0 \coloneq A$. Clearly, the invariants above are satisfied for $i=0$.

Let $0 \leq i < h$ and assume that $A_i$ and $v_1,\ldots,v_i$ have already been defined.
Let \[B_i \coloneqq A_i \cap \Ball(\{v_1,\ldots,v_i\}, 37)\qquad \textrm{and} \qquad A_i' = A_i \setminus B_i.\]
If $|B_i| \geq m$, we return $B \coloneqq B_i$ and $X \coloneqq \{v_1,\ldots,v_i\}$.
Otherwise, as $|A_i| \geq (m+h-1)^{10^{h-i}} \geq m^{10}$, we have $|A_i'| \geq m^{10}-m \geq 4^5 (h+1)^{10}$. 
Let \[G_i \coloneqq G \setminus \Ball(\{v_1,\ldots,v_i\}, 37).\] Clearly, $A_i'$ is $2d$-scattered in $G_i$.

We apply \cref{lem:indminoruqw:unit-step} to $h$, $d$, $G_i$, and $A_i$.
The third outcome cannot happen, because $G$ is assumed not to contain such a shallow induced minor. 
In case the second outcome happens with $B \subseteq A_i'$ of size at least $|A_i'|^{\frac{1}{10}}$, we return $B \cup B_i$ and $X \coloneqq \{v_1,v_2,\ldots,v_i\}$. 
Note that this is a valid return as
\begin{equation}\label{eq:BiAi}
    |B_i\cup B|\geq |B_i| + |A_i'|^{\frac{1}{10}} = |B_i| + \left(|A_i| - |B_i|\right)^{\frac{1}{10}} \geq |A_i|^{\frac{1}{10}} \geq \left(m+h-1\right)^{10^{h-i-1}} \geq m.
\end{equation}
In case of the first outcome happens with a vertex $v$, we set $v_{i+1} \coloneqq v$ and 
\[ A_{i+1} \coloneqq B_i \cup \left(A_i' \cap \Ball_{G_i}(v, d + 6)\right). \]
The first invariant follows from the same calculation as in~\eqref{eq:BiAi}. The second is immediate due to the definition of $B_i$ and the
third follows from $v \in V(G_i)$. 

Assume that the procedure executed all the steps without termination, which resulted in finding $A_h$ and $v_h$. 
Again, let $B_h \coloneqq A_h \cap \Ball(\{v_1,\ldots,v_h\}, 37)$ and $A_h' \coloneqq A_h \setminus B_h$;
and if $|B_h| \geq m$, then $B \coloneqq B_h$ and $X \coloneqq \{v_1,\ldots,v_h\}$ constitute a valid output


Otherwise, we have $|A_h'| \geq h$. In particular, $A_h'$ is nonempty and by the second invariant we have that every vertex of $A_h'$ is simultaneously in $\bigcap_{j\in [h]}\Ball(v_j,d+6)$ and outside of $\Ball(\{v_1,\ldots,v_h\},37)$. So we must have $d+6>37$, or equivalently, $d>31$.

Let $H$ be an auxiliary biclique with sides $A_h'$ and $\{v_1,\ldots,v_h\}$.
We will construct a $(d-14,6)$-shallow 
induced model of the $1$-subdivision of $H$ in $G$. Note that then, by selecting a matching of size $h$ in $H$ and merging, for every edge $f$ of the matching, the branch sets of the endpoints of $f$ and the branch set of the vertex subdividing $f$, we may conclude that $G$ contains a $(2(d-14)+6+1=2d-21,6)$-shallow induced model of the $1$-subdivision of $K_h$; this will be a contradiction.

We define
\begin{align*}
    \phi(x) & \coloneqq  \Ball(x, d-13)&&\qquad \textrm{for }x \in A_h';\textrm{ and}\\
    \phi(v_i) & \coloneqq \Ball(v_i, 6) &&\qquad \textrm{for }i\in [h].
\end{align*}
Further, for $e = xv_i \in E(H)$ with  $x \in A_h'$ and $i\in [h]$, we fix a shortest path $P_e$
between $x$ and $v_i$ and let $\phi(e)\coloneqq V(P_e)\setminus (\phi(x)\cup \phi(v_i))$; note that thus, $\phi(e)$ is the vertex set of an infix of $P_e$. 
We claim that $\phi$ is the desired model of the $1$-subdivision of $H$ (where every edge $e$ of $H$ is implicitly identified with the vertex subdividing it).

Recall that $A_h' \subseteq A$ is $2d$-scattered in $G$, while $\dist(v_i, x) \leq d+6$ for every $i\in [h]$ and $x \in A_h'$.
Hence also $\dist(v_i,x) > d-6$. 
This implies that for every $x \in A_h'$ and $i\in [h]$, $\phi(x)$ and $\phi(v_i)$ are disjoint and non-adjacent.
In particular, $\phi(xv_i)$ is well-defined and nonempty, and it induces a path of length between $0$ and $11$ (so of radius at most $6$). 
Note that thus we have $\phi(xv_i) \subseteq \Ball(x, d-1) \cap \Ball(v_i, 18)$.

Let $x \in A_h'$ and $i\in [h]$. 
Since $A_h'$ is $2d$-scattered, for every $y \in A_h'$, $x \neq y$, and $j\in [h]$
(possibly $i=j$), the sets $\phi(x) \cup \phi(xv_i)$ and $\phi(y) \cup \phi(yv_j)$ 
are disjoint and anti-adjacent, as they are contained in $\Ball(x,d-1)$ and $\Ball(y,d-1)$, respectively.
Similarly, since $\{v_1,\ldots,v_h\}$ is 37-scattered, 
for every $y \in A_h'$ (possibly $x=y$), and $1 \leq j \leq h$, $i \neq j$, 
the sets $\phi(v_i) \cup \phi(xv_i)$ and $\phi(v_j) \cup \phi(yv_j)$ are disjoint and non-adjacent,
as they are contained in $\Ball(v_i, 18)$ and $\Ball(v_j, 18)$, respectively.
As $d-13\geq 6$, this finishes the proof that $\phi$ is a $(d-12, 6)$-shallow induced minor model of the $1$-subdivision of $H$ in $G$.
\end{proof}

\subsection{Iteration}

We are ready to prove the main result of this section, which also proves the \ref{c:ismf}$\,\Rightarrow$\,\ref{c:df} implication of \cref{thm:equivalences}.

\inducedflatness*

\begin{proof}
    We prove the statement for
    \[ N(m)\coloneqq m\cdot \left(\max\left(4^5(h+1)^{10}, m+h-1\right)\right)^{10^{h\cdot \max(d-18,1)}}\in m^{2^{\Oh(hd)}}. \]
    We can assume $m \geq 4^5 (h+1)^{10} - h + 1$, as otherwise we can just reset $m \coloneqq 4^5 (h+1)^{10} -h + 1$.

    So suppose $A\subseteq V(G)$ is such that $|A|\geq N(m)$.
    Suppose there exists $v \in V(G)$ satisfying $|A \cap \Ball(v,37)| \geq m$.
    Then, we may return $B \coloneqq A \cap \Ball(v,37)$ and $X \coloneqq \{v\}$. 
    Otherwise, by a greedy argument, there exists $A_{18} \subseteq A$ that is $37$-scattered and is of size at least
    \[ \frac{|A|}{m} \geq \left(m+h-1\right)^{10^{h \max(d-18,1)}}. \]
    If $d \leq 18$, then we are done with $B \coloneqq A_{18}$ and $X \coloneqq \emptyset$, so assume otherwise. 
    We will construct sets $A_{18} \supseteq A_{19} \supseteq \ldots \supseteq A_d$ and $X_{18} \subseteq X_{19} \subseteq \ldots\subseteq X_d$
    so that for every $18 \leq i \leq d$, 
    \begin{itemize}
        \item $A_i$ is $(2i,\Ball(X_i,37))$-scattered in $G$,
        \item $|X_i| \leq h(i-18)$, and
        \item $|A_i| \geq \left(m+h-1\right)^{10^{h(d+1-i)}}.$
    \end{itemize}
    These invariants are satisfied for $A_{18}$ and $X_{18} \coloneqq \emptyset$.

    Assume then that for some $18 \leq i < d$ we have constructed $A_i$ and $X_i$. 
    We set $B_i \coloneqq A_i \cap \Ball(X_i,37)$ and $A_i' \coloneqq A_i \setminus B_i$.
    If $|B_i| \geq m$, then we may return $B \coloneqq B_i$ and $X \coloneqq X_i$, so assume otherwise.
    Let 
    \[ m' \coloneqq \left(m+h-1\right)^{10^{h(d-i)}} - |B_i|. \]
    As $i < d$, we have $m' \geq m + h - 1$. Furthermore, by the lower bound on the size of $A_i$, we have 
    \[ |A_i'| \geq (m')^{10^h}. \]
    Let $G_i \coloneqq G \setminus \Ball(X_i,37)$.
    We apply \cref{lem:indminor:small-step} to $h$, $d' \coloneqq i$, $m'$, and $A_i'$,
    obtaining sets $B_i'$ and $X_i'$ such that $|B_i'| \geq m'$, $|X_i'| \leq h$,
    and $B_i'$ is $(2(i+1),\Ball_{G_i}(X_i', 37))$-scattered in $G_i$. 

    Let $A_{i+1} \coloneqq B_i \cup B_i'$ and $X_{i+1} \coloneqq X_i \cup X_i'$. 
    Clearly, $A_{i+1}$ is $(2(i+1), \Ball(X_{i+1},37))$-scattered in $G$.
    Furthermore, 
    \[ |A_{i+1}| \geq m' + |B_i| =\left(m+h-1\right)^{10^{h(d-i)}}. \]
    This finishes the description of one step of the construction.

    Once we obtain $A_d$ and $X_d$, we may return $B \coloneqq A_d$ and $X \coloneqq X_d$. This finishes the proof.
\end{proof}

\section{Drill-flatness implies FSMF}

Next, we prove the \ref{c:df}\,$\Rightarrow$\,\ref{c:fsmf} implication of \cref{thm:equivalences}.

\begin{lemma}\label{lem:drill-flat-implies-fsmf}
  Every drill-flat graph class is fat-shallow-minor-free.
\end{lemma}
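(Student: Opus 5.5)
We argue by contraposition, following the intuition from the introduction that a fat shallow minor model of a large clique is an obstruction to flatness. Suppose $\CC$ is drill-flat with drill radius $\beta\in\N$. We take the fatness parameter $\delta\coloneqq 2\beta+2$, plus a small additive slack (say $+2$), and for every $\alpha\in\N$ we set $H_\alpha\coloneqq K_n$ for $n$ large enough. We then show that no $G\in\CC$ contains $K_n$ as an $\alpha$-shallow $\delta$-fat minor.

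Fix $\alpha$. We apply drill-flatness with scatter radius $\alpha'\coloneqq 4\alpha+2$ to obtain an overhead $N$ and a budget $s$. We take $m\coloneqq 2s+2$ and $n\coloneqq N(m)$. Suppose $\phi$ is an $\alpha$-shallow $\delta$-fat model of $K_n$ in some $G\in\CC$. For every vertex $v$ of $K_n$ we pick a center $c_v\in\phi(v)$ witnessing that the radius is at most $\alpha$, and we let $A\coloneqq\{c_v\}$. Since the branch sets are pairwise at distance at least $\delta>0$, these centers are distinct, so $|A|=n$. Drill-flatness then yields $B\subseteq A$ with $|B|=m$ and $X\subseteq V(G)$ with $|X|\le s$ such that $B$ is $(\alpha',\Ball(X,\beta))$-scattered.

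The key step is a counting argument. Each ball $\Ball(x,\beta)$, having diameter at most $2\beta<\delta$, meets at most one branch set of the model. Hence the at most $s$ balls meet at most $s$ vertex branch sets and at most $s$ edge branch sets in total. Because $m=2s+2$, at least $s+2$ of the centers in $B$ have vertex branch sets avoiding $\Ball(X,\beta)$. Each of the at most $s$ hit edge branch sets corresponds to a single edge, so among any $s+2$ surviving vertices some pair $u,v$ has $\phi(uv)$ untouched; indeed $\binom{s+2}{2}>s$ suffices.

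For this pair, the path $c_u\to\phi(u)\to\phi(uv)\to\phi(v)\to c_v$ stays inside $\phi(u)\cup\phi(uv)\cup\phi(v)$, which is disjoint from $\Ball(X,\beta)$. Its length is at most $\alpha+2\alpha+\alpha$ if we route through the edge branch set. The edge branch set is also $\alpha$-shallow, so traversing it costs at most $2\alpha$ between the two intersection points. In the unweighted setting each transition adds at most $1$. This gives a path of length at most $4\alpha+2\le\alpha'$ between two distinct elements of $B$ outside the drilled region, contradicting the scatteredness of $B$. We should also check that $c_u,c_v\notin\Ball(X,\beta)$, which holds because $\phi(u),\phi(v)$ avoid the balls.

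The main obstacle is the conversion between the unweighted and metric definitions of shallowness, fatness, and drill-flatness, where the constants shift by $1/2$ or $2$. We handle this using \cref{lem:equiv-fat-minors} and \cref{lem:equiv-flatness} by enlarging $\delta$ and $\alpha'$ slightly, which is harmless since the definition of FSMF only requires some $\delta$ depending on $\beta$.
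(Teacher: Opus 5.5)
Your proof is correct and is essentially the paper's argument: centers of the vertex branch sets, drill-flatness at scatter radius about $4\alpha$ with $m=2s+2$, a pair $u,v$ for which $\phi(u)\cup\phi(uv)\cup\phi(v)$ avoids $\Ball(X,\beta)$, and then a path of length at most $4\alpha$. The only difference is how the pair is found: the paper sends each $x\in X$ to its nearest branch set and discards the at most two vertices it ``blocks'', while you count hit vertex and edge branch sets separately. Your strict choice $\delta>2\beta$ is also what makes the distance contradiction work, since the paper's $\delta=2\beta$ only pits $\le 2\beta$ against $\ge\delta$.

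One misstatement needs fixing. A ball of radius $\beta$ can meet a vertex branch set together with an incident edge branch set, because these intersect. The correct claim is that it meets at most one vertex branch set and at most one edge branch set, since two distinct edges are non-incident objects even when they share an endpoint. That corrected claim is exactly what your subsequent count uses.
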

\begin{proof}
  We will prove the lemma by contraposition.
  Let $\CC$ be a graph class that is not fat-shallow-minor-free and assume towards a contradiction that $\CC$ is drill-flat for some drill radius $\beta \in \N$. 

  As $\CC$ is not fat-shallow-minor-free, we have that for every $\delta$ there exists some $\alpha \in \N$ such that $\CC$ contains every graph as an $\alpha$-shallow  $\delta$-fat minor.
  Fix such an $\alpha$ for $\delta \coloneq 2\beta$.

  Let $s \in \N$ and $N \colon \N \to \N$ be the budget and the overhead for which $\CC$ is $(4\alpha,\beta)$-drill-flat. Set $n\coloneq N(2s+2)$.

  Let $K = (V_K,E_K)$ be the clique on $n$ vertices and $G = (V_G,E_G)$ be a graph in $\CC$ that contains $K$ as an $\alpha$-shallow $\delta$-fat  minor, witnessed by a model $\phi \colon V_K \cup E_K \to 2^{V_G}$.
  Let $c \colon V_K \to V_G$ be an injective function that assigns to each vertex $p$ from $K$, a \EMPH{center} vertex $c(p) \in \phi(p)$ from $G$ that witnesses the $\alpha$-shallowness: this means $G[\phi(p)]$ contains a path of length at most $\alpha$ from each $v \in \phi(p)$ to $c(p)$.
  We apply $(4\alpha, \beta)$-drill-flatness to the set of all centers $c(V_K)$, which yields a subset $B \subseteq V_K$ of size at least $2s + 2$ and a set $X \subseteq V_G$ of size at most $s$ such that $c(B)$ is $(4\alpha,\Ball(X,\beta))$-scattered.
  
  Our goal will be to prove the following.

  \begin{claim}\label{clm:twogood}
  There exist two distinct vertices $p,q \in B$ of $K$ satisfying
  \[
    \Ball(X,\beta) \cap \big( \phi(p) \cup \phi(pq) \cup \phi(q) \big) = \emptyset.
  \]  
  \end{claim}
  
  From the claim, the desired contradiction is derived as follows.

  \begin{itemize}
    \item On the one hand, $\Ball(X,\beta)$ intersects every path of length at most $4\alpha$ path between $c(p)$ and $c(q)$, as they are both part of the scattered set $c(B)$.
    \item On the other hand, $\Ball(X,\beta)$ does not intersect the branch sets $\phi(p)$, $\phi(pq)$, and $\phi(q)$, which together contain a path of length at most $4\alpha$ between $c(p)$ and $c(q)$, by $\alpha$-shallowness.
  \end{itemize}

  Towards proving the claim, choose $f: X \to V_K \cup E_K$ to be any function satisfying
  \[
    f(x) \in \argmin_{o \in V_K \cup E_K} \left[\dist \big(x,\phi(o)\big)\right].
  \]
  This means $f$ maps each vertex $x \in X$ to the vertex or edge of $K$ whose branch set is the closest to $x$ (with ties broken arbitrarily).
  We say a vertex $x \in X$ \EMPH{blocks} a vertex $p \in V_K$ if $f(x)$ is either 
  \begin{itemize}
    \item a vertex of $K$ and equal to $p$, or
    \item an edge of $K$ with endpoint $p$.
  \end{itemize}
  The following claim implies \Cref{clm:twogood}.

  \begin{claim}\label{clm:onegood}
    For all $p,q \in V_K$ and $x \in X$ such that neither $p$ nor $q$ is blocked by $x$:
    \[
      \Ball(x,\beta) \cap \big( \phi(p) \cup \phi(pq) \cup \phi(q) \big) = \emptyset.  
    \]
  \end{claim}

  In particular, \Cref{clm:onegood} allows us to choose the desired elements $p,q$ for \Cref{clm:twogood} as two non-blocked elements from $B$.
  This is possible, since each vertex from $X$ blocks at most two vertices from~$V_K$, so $B$ contains at least
  \[
    |B| - 2|X| \geq (2s+2) - 2s \geq 2
  \]
  non-blocked vertices.
  Hence, in order to prove the lemma, it only remains to prove \Cref{clm:onegood}.
  
  Fix $p,q,x$ accordingly and assume towards a contradiction that the conclusion of the claim fails.
  Then there is $o \in \{p,pq,q\}$ such that $\Ball(x,\beta)$ intersects $\phi(o)$.
  This means $x$ and $\phi(o)$ are at a distance at most $\beta$.
  By the choice of $f$, this upper bounds the distance between $x$ and $\phi(f(x))$ by~$\beta$, too.
  By the triangle inequality, $\phi(o)$ and $\phi(f(x))$ are distance at most $2\beta$.
  
  On the other hand, since $p$ and $q$ are not blocked by $x$, $p$ and $q$ are neither endpoints of $f(x)$ (if $f(x)$ is an edge) nor equal to $f(x)$ (if $f(x)$ is a vertex).
  In particular, $o$ is distinct from $f(x)$, and neither is an endpoint of the other. Thus, $\phi(o)$ and $\phi(f(x))$ are at a distance larger than $2\beta = \delta$ by $\delta$-fatness; a contradiction which proves the claim and therefore the lemma.
\end{proof}

\section{Pattern-free implies ISMF}
The goal of this section is to prove the following lemma.

\begin{lemma}\label{thm:patterns}
  Let $\CC$ be a graph class such that for every $r \geq 3$, there is $k \in \N$ such that $\CC$ excludes $\clique{r}{k}$ and $\web{r}{k}$ as induced subgraphs.
  Then $\CC$ is ISMF.
\end{lemma}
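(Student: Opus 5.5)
We argue by contraposition. Suppose $\CC$ is not ISMF. Then there is $d\in\N$ such that every graph is a $d$-shallow induced minor of some graph in $\CC$. We will show that for $r\coloneqq$ some fixed value depending only on $d$ (say $r$ large compared to $d$, with $r\geq 3$), $\CC$ contains $\clique{r}{k}$ or $\web{r}{k}$ as an induced subgraph for every $k$. The plan follows the outline from the introduction: we extract one of the patterns from induced models of large subdivided bicliques using Ramsey's theorem. Here we use that $\CC$ may be assumed hereditary, as in \cref{thm:equivalences}; otherwise we pass to its hereditary closure, since excluding induced subgraphs is unaffected.

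\textbf{Step 1 (choose the host model).} Fix $n$ huge and take $G\in\CC$ containing a $d$-shallow induced model $\phi$ of $\clique{t}{n}$, where $t$ is a suitable subdivision parameter; alternatively, take a subdivided biclique, for which induced models are easier to control. Each branch set $\phi(v)$ has radius at most $d$, and non-adjacent vertices have non-adjacent branch sets.

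\textbf{Step 2 (extract paths).} For every pair of principal vertices $p,q$, the model of the subdivided path from $p$ to $q$ yields an induced path in $G$ between the centers $c(p)$ and $c(q)$. This path runs inside the union of the branch sets along that subdivided path, and we pick it shortest within that union. Its interior, away from $\phi(p)\cup\phi(q)$, is anticomplete to all branch sets of other pairs. Hence the only possible edges between different such paths occur inside or near the branch sets $\phi(p)$ of principal vertices, in the segments within distance $\Oh(d)$ of $c(p)$.

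\textbf{Step 3 (Ramsey cleanup).} This is the main obstacle. Near each principal center $c(p)$ we have $n-1$ path-initial segments, each of length $\Oh(d)$, all entering $\phi(p)$. We truncate each path at its last vertex in $\phi(p)$; call this vertex the port. We then want the ports around $p$ to be either pairwise anticomplete or pairwise adjacent, with a uniform adjacency pattern. To obtain this, we colour triples and quadruples of principal vertices by the isomorphism type of the graph induced on the union of the relevant bounded-length segments. This is a bounded number of colours depending on $d$, and also records whether the paths from $p$ to $q$ and from $p$ to $q'$ share vertices. Applying Ramsey's theorem for hypergraphs yields a large homogeneous subset of principal vertices. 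Homogeneity, together with an ordering argument (a fan-Ramsey lemma), forces one of two behaviours: the connections from $p$ to the other principal vertices either become internally disjoint and anticomplete after leaving a common root, or their first vertices form a clique. Sharing segments can be handled by moving the effective principal vertex to the branching point.

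\textbf{Step 4 (assemble the pattern).} After homogenization, we shortcut and equalise path lengths so that all connecting paths have exactly $r$ internal vertices. Taking $t\geq r$ large and cutting the paths appropriately, the induced subgraph is exactly $\clique{r}{k}$ in the anticomplete case and $\web{r}{k}$ in the clique case. Equalising lengths uses that the middles are anticomplete and long, so we may choose $r$ as the common lower bound after a pigeonhole over lengths. Since $\CC$ is hereditary, this contradicts the hypothesis for this $r$.

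I expect the homogenisation of the local structure near principal branch sets in Step 3 to be the delicate part. In particular, guaranteeing that the webs produced have exactly the clique-on-neighbourhood structure, rather than a mixed structure, may require the bipattern extraction over subdivided bicliques mentioned in the introduction.
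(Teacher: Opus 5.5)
There is a genuine gap in Steps~3--4. You claim that hypergraph-Ramsey homogeneity plus a fan-type re-rooting gives, at every principal vertex, one center from which the connections to \emph{all} other principal vertices form a single independent or linked fan. From this you conclude the result is exactly $\clique{r}{k}$ or $\web{r}{k}$. In a model of a subdivided \emph{clique} this does not follow.

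A principal vertex $p$ in the middle of the order connects to both smaller and larger indices. Your colouring of triples only makes three separate types homogeneous at $p$: both segments going up, both going down, and one of each. These types may differ. Concretely, let $\phi(p_i)$ consist of $c_i$ and neighbours $u_{ij}$ ($j\neq i$), where $\{u_{ij}\colon j>i\}$ is a clique, $\{u_{ij}\colon j<i\}$ is independent, and there are no edges between the two groups. This is a perfectly homogeneous $1$-shallow induced model. The extracted structure is ``half-webbed'', so it is neither pattern. Passing to a sub-clique cannot help, since middle principal vertices always see both directions.

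Independently, the fan-lemma re-rooting is order-dependent: you take the maximal position carrying cross edges and move the center onto a sacrificed root path. So the up-segments and down-segments may force \emph{different} new centers, whereas a pattern needs a single principal vertex. This is exactly the consistency obstruction the paper points out right before introducing bipartite patterns. You flag it in your last sentence but do not resolve it.

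Separately, ``equalising lengths by cutting to the common lower bound'' is not available. An induced path between fixed endpoints cannot be shortened inside an induced subgraph, so lengths must be made equal by Ramsey (colouring pairs by length), not by truncation.

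The missing idea is to work bipartitely and convert afterwards.
\begin{enumerate}
\item Start from a $d$-shallow induced model of $\biclique{3}{M}{M}$. For each pair take a minimal $A_i$--$B_j$ path through $S_{ij}$, so that only its second vertex $x_{ij}$ touches $A_i$, and symmetrically at $B_j$.
\item Make all these lengths equal by bipartite Ramsey.
\item Apply the fan lemma (\cref{lem:fan-ramsey}) in $G[A_i\cup X_i(J)]$ for $i=1,2,\ldots$ in turn, shrinking the index set $J$ each time. Since each branch set faces only one side, consistency across branch sets is automatic. Repeat on the $B$-side.
\item Pigeonhole fan lengths and types to obtain $\biclique{r}{m}{m}$, $\biweb{r}{m}{m}$, or $\mixweb{r}{m}{m}$ with $3\le r\le 11d$ (\cref{lem:extract-bipatterns}).
\item Route each pair of principal vertices on one side through a private principal vertex on the other side, shortcutting through it when that side is webbed. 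In the mixed case, choose the webbed side as the routing side. This yields $\clique{2r+1}{n}$, $\web{2r}{n}$, or $\clique{2r}{n}$ (\cref{obs:bipartize}).
\item A final pigeonhole over the bounded range of $r$ fixes one $r$ that works for all $k$.
\end{enumerate}
Note also that hereditarity is not needed for this lemma: the hypothesis already speaks about induced subgraphs of members of $\CC$.
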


We will prove it by contraposition and show that if $\CC$ is not ISMF, i.e., $\CC$ contains all graphs as induced $d$-shallow minors for some fixed $d \in \N$, then $\CC$ also contains arbitrarily large $r$-subdivided cliques and $r$-webs for some fixed $r \geq 3$.
Hence, proving \Cref{thm:patterns} reduces to proving the following lemma.

\begin{lemma}\label{lem:extract-patterns}
  There is a function $R: \N \to \N$, such that for all $d,m \in \N$ there is a graph $H$ with the following property.
  If a graph $G$ contains $H$ as a $d$-shallow induced minor, then there is $3 \leq r \leq R(d)$ such that $G$ contains 
  $\clique{r}{m}$ or $\web{r}{m}$ as an induced subgraph.
\end{lemma}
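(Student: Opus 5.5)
We take $H$ to be a large subdivided biclique. Fix $d,m$ and let $H$ be the $1$-subdivision of $K_{n,n}$ for $n$ huge in terms of $d$ and $m$; since the construction only needs $H$ to be huge, a $1$-subdivided biclique suffices and we need not pass through arbitrary graphs. Let $\phi$ be a $d$-shallow induced model of $H$ in $G$. For each principal vertex $u$ we fix a center $c_u\in\phi(u)$. For each subdivided edge $uw$, with subdivision vertex $e$, we pick a shortest path inside $G[\phi(u)\cup\phi(e)\cup\phi(w)]$ from $c_u$ to $c_w$. Its length is at most $6d+2$. Because the model is induced, every edge of $G$ between distinct branch sets comes from an adjacency in $H$, so these paths interact only through shared branch sets.

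The plan is to extract, with Ramsey's theorem, a structured family of such paths whose union induces either $\clique{r}{m}$ or $\web{r}{m}$, and we proceed in three steps. First, we bound the path lengths and take a Ramsey subset so that all connecting paths $P_{uw}$, for $u$ on the left and $w$ on the right, have the same length $L\le 6d+2$. Second, we handle the parts of the paths near a principal vertex. The paths leaving $c_u$ lie in the tree-like region $\phi(u)$ of radius at most $d$. We truncate each path at the point where it exits $\phi(u)$ and enters $\phi(e)$, so every path splits into a "left arm" in $\phi(u)$, a middle in $\phi(e)$, and a "right arm" in $\phi(w)$.

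Third, we classify how distinct arms interact. We colour pairs and triples of right-vertices $w,w'$ according to the adjacency pattern between the arms of $P_{uw}$ and $P_{uw'}$ inside $\phi(u)$, recording which positions are equal or adjacent; this is a bounded number of colours since the arms have length at most $d$. Applying a product Ramsey argument on the bipartite structure, in the spirit of \cref{lem:fan-ramsey}, we obtain many $u$'s and many $w$'s on which the interaction type is uniform. Following each arm from the middle towards $c_u$, let $t$ be the first index at which two arms share or touch vertices; uniformity of the colouring makes $t$ the same for all pairs. There are then two outcomes. If the arms become pairwise equal at a common vertex, that vertex acts as a principal vertex with disjoint non-adjacent branches, which yields a subdivided star. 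If they first become pairwise adjacent without merging, they form a clique neighbourhood, which yields the web. This dichotomy is precisely $\clique{r}{\cdot}$ versus $\web{r}{\cdot}$. The same dichotomy must be uniform on the $w$-side; if the two sides give different types, the resulting mixed pattern is handled by the bipattern analysis (\cref{lem:extract-bipatterns}).

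Finally, we turn the bipartite pattern into a clique pattern. The extracted structure is a subdivided biclique, or biweb, between $m'$ left and $m'$ right principal vertices with all connecting paths of length $r\le R(d)$. To obtain $\clique{r}{m}$ we want principal vertices that are pairwise connected, whereas so far only left-right pairs are connected. For this we start instead from $H$ being the $1$-subdivision of a huge clique with a bipartition-free structure, and apply the Ramsey argument above over pairs of vertices directly, so that every pair of kept principal vertices is joined by a uniform path. We then need $r\ge 3$, which we obtain by adding a constant to $r$ if necessary, since the middle section always contributes at least one vertex and the arms are nonempty.

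The main obstacle is the Ramsey classification of arm interactions. Arms of different paths can merge and split repeatedly, and they can be adjacent at staggered positions. One must show that, after homogenisation, only the two clean outcomes survive, and in particular that "shortcut" edges between arms do not destroy inducedness. If they did, they would contradict the choice of shortest paths, or they would produce shorter uniform paths; in the latter case we would recurse with smaller $L$, which bounds $R(d)$.
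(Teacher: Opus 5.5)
There is a genuine gap. Your argument has two missing pieces, and the second one is the decisive step in this lemma.

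\textbf{The arm homogenisation is asserted, not proved.} You flag this step yourself as the main obstacle, and as set up it does not work. Your arms toward different partners $w,w'$ are chosen independently, as shortest paths in different subgraphs $G[\phi(u)\cup\phi(e)\cup\phi(w)]$. So they can merge and split repeatedly and be joined by edges at staggered positions, and none of this contradicts the choice of shortest paths; ``recurse with smaller $L$'' does not repair it. Even the split into a left arm in $\phi(u)$, a middle in $\phi(e)$ and a right arm in $\phi(w)$ is not guaranteed, since a shortest $c_u$--$c_w$ path may leave $\phi(u)$ and later re-enter it. In your ``web'' outcome you get a clique of first-touch vertices, but no principal vertex whose neighbourhood is exactly that clique.

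The paper resolves this with the Fan Lemma (\cref{lem:fan-ramsey}):
\begin{enumerate}
\item All arms at one branch set are routed along a single BFS tree of $G[A_i\cup X_i(J)]$, so after recursing into a heavy child subtree the chosen paths meet only at the root.
\item Ramsey's theorem makes constant the set $C$ of position pairs $(a,b)$ with $q_i^aq_j^b\in E(G)$.
\item If $C\neq\emptyset$, let $h$ be the largest position occurring in $C$ with, say, $(a,h)\in C$ (the case $(h,a)\in C$ is symmetric). One path is sacrificed, the fan is recentred at $q_0^a$, and only the suffixes from position $h$ are kept. This gives an independent or a linked fan.
\end{enumerate}
The paper also uses minimal $A_i$--$B_j$ paths through the $3$-subdivision branch sets $S_{ij}$. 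This makes the middle segments $R_{ij}$ have no neighbours in $A_i\cup B_j$, and $r\geq 3$ holds automatically. ``Adding a constant to $r$'' is not a legitimate move, since $r$ is dictated by the induced subgraph you actually find.

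\textbf{Switching back to a clique loses consistency.} In the last step you abandon the biclique for a $1$-subdivided clique and apply the Ramsey argument over pairs directly. This reintroduces exactly the consistency problem the bipartite detour avoids. Homogenising the arms at $u$ keeps a $u$-dependent set of partners. Choosing $u_1,u_2,\dots$ greedily only controls the arms from $u_i$ toward later $u_j$, never the arms from $u_j$ back to $u_i$. You give no argument producing $m$ principal vertices whose mutual arms are homogeneous at both ends.

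In the bipartite setting this is straightforward: process $A_1,A_2,\dots$ while shrinking the right index set $J$, then process the $B_j$ while shrinking $I$. No return to cliques is needed, because:
\begin{itemize}
\item $\biclique{r}{n}{\binom{n}{2}}$ contains $\clique{2r+1}{n}$, by letting each right principal vertex be the midpoint of the path between a distinct pair of left principal vertices;
\item $\biweb{r}{n}{\binom{n}{2}}$ contains $\web{2r}{n}$;
\item the mixweb yields $\clique{2r}{n}$.
\end{itemize}
The paper's proof is exactly this: take $H=\biclique{3}{M}{M}$, apply \cref{lem:extract-bipatterns} with $m$ replaced by $\max(m,\binom{m}{2})$, and finish with \cref{obs:bipartize}. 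This gives $R(d)=22d+1$.
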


The key idea to exhibit large subdivided cliques and webs inside shallow induced minor models is encapsulated by the following definition and lemma.

\begin{definition}[$\ell$-fan]\label{def:fan}
  Let \(G\) be a graph, let \(c\in V(G)\), let \(X\subseteq V(G)\), and let
  \(\ell\ge 1\) be an integer. An \emph{\(\ell\)-fan} with \emph{center} \(c\) and
  \emph{leaf set} \(X\) is a family of induced length-$\ell$ $c$-$x$ paths 
  \[
      P_x \quad = \quad c p_x^1 p_x^2 \cdots p_x^\ell=x
  \]
  for each $x\in X$, that are pairwise vertex-disjoint except for their
  common endpoint \(c\), and together satisfy one of the following two properties:
  \begin{enumerate}
    \item There are no edges between \(P_x-\{c\}\)
    and \(P_y-\{c\}\) for distinct \(x,y\in X\).
    \item The only edges between \(P_x-\{c\}\) and
    \(P_y-\{c\}\), for distinct \(x,y\in X\), are the edges \(p_x^1p_y^1\). In
    particular, the vertices \(p_x^1\), \(x\in X\), then form a clique.
  \end{enumerate}
    In the first case we call the fan \emph{independent}, in the second we call it \emph{linked}.
\end{definition}

The following lemma is a generalization of
\cite[Lemma 2.2]{SCOTT2020487}.
The latter additionally assumes bounded clique number, and thus always yields an independent fan.

\begin{lemma}[Fan lemma]\label{lem:fan-ramsey}
    For every $r\in \N$ there exists a function $F : \N \to \N$ with the following property.
  Let \(G\) be a graph, \(o\in V(G)\), $m\in\N$, and \(X\subseteq V(G)\) be a set
  of at least \(F(m)\) vertices, each at distance at most \(r\) from \(o\).
  Then are \(X'\subseteq X\) of size \(m\) and $1\leq \ell \leq r$,
  such that $G$ contains an $\ell$-fan with leaf set $X'$.
\end{lemma}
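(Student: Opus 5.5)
We argue by induction on $r$, following the proof of \cite[Lemma~2.2]{SCOTT2020487}. The only new ingredient is that we cannot rely on bounded clique number. Instead, each Ramsey step may also end in a large clique, and that clique will be used as the linked layer of the fan. Throughout, fix a BFS-type tree from $o$. Each $x\in X$ has $\dist(o,x)\leq r$, and by pigeonhole we pass to a subset of size $|X|/(r+1)$ in which all vertices have the same distance $t\leq r$ from $o$. (If $t=0$ then $|X|\le 1$, which is excluded once $F(m)\ge r+2$.) For each such $x$, fix a shortest $o$-$x$ path $Q_x$; it is automatically induced.

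The core of the argument is a descent through the levels of the BFS layering. Call $u$ the \emph{parent} of $x$ if $u$ is the predecessor of $x$ on $Q_x$. Consider the set of distinct parents of the current leaves. If some single parent $u$ is shared by many leaves, we recurse on $u$ as a new center at distance $1$ from these leaves. Otherwise, we may pass to many leaves with pairwise distinct parents and move one level up, replacing each leaf by its parent while remembering the path down. Rather than a single center, we take the deepest level $j$ at which many paths still share a common vertex $c$; this exists since all paths share $o$. Below $c$, the subpaths from $c$ to the leaves, of common length $\ell=t-j\geq 1$, can be chosen to be vertex-disjoint except at $c$ after a further Ramsey thinning. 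The reasoning is the same as in the sunflower-like step of Scott–Seymour: paths are shortest paths, so they cannot meet at different depths, and if they meet at the same depth below $c$ we could go deeper. In this way we obtain many induced $c$-$x$ paths $c\,p_x^1\cdots p_x^\ell$, pairwise internally disjoint, with $p_x^i$ at distance exactly $i$ from $c$ along a geodesic.

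It remains to control the edges between distinct paths. Since all paths are geodesics from $o$ through $c$ and the layering is by distance from $o$, an edge $p_x^i p_y^k$ forces $|i-k|\leq 1$. For each pair $\{x,y\}$, record which of the boundedly many pairs $(i,k)$ with $|i-k|\le1$ carry edges, together with the orientation relative to a fixed ordering of $X$. This is a colouring of pairs with $2^{O(\ell)}$ colours, so Ramsey's theorem gives a large homogeneous set.

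The main obstacle is to show that the homogeneous colour must be \emph{empty}, or else contain only the edges $p^1_xp^1_y$; any other edge pattern must be eliminated. If some edge $p_x^ip_y^k$ with $i\geq 2$ or $k\ge2$ is present homogeneously, we re-route: replace the paths so that the common vertex can be pushed deeper. For instance, with $p_x^ip_y^i$ for $i\ge2$ on all pairs, the vertices $p^i_x$ form a clique, and we can take one of them, say $c'=p^i_{x_0}$, as a new center. Distances from $c'$ to the remaining leaves are at most $\ell-i+1<\ell$, and these leaves are still within distance $r$. We then apply induction on $\ell$ (equivalently, on $r$, with the radius parameter decreasing), choosing the function $F$ large enough to absorb this. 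The cross pattern $p^i_xp^{i+1}_y$ is handled similarly, since it again yields short detours to a new center. The truly new case relative to Scott–Seymour is when $p^1_xp^1_y$ are the only homogeneous edges; this is exactly a linked fan. We therefore set $F(m)$ to be an iterated Ramsey number, of tower type in $r$ and with the number of colours bounded in terms of $r$, so that all thinnings and the induction on the radius fit within the budget.
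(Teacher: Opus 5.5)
Your argument is correct. It follows the paper's skeleton: induction on $r$, a fixed BFS tree, descending until many equal-length tree paths leave a common vertex and are disjoint below it, then Ramsey on the ordered cross-edge pattern. Where you differ is in how you resolve a nonempty homogeneous pattern $C$.

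The paper finishes in one shot. Let $h$ be the largest position occurring in $C$ and suppose $(a,h)\in C$. The paper takes $c:=q_0^a$ on the first path and the suffixes $c\,q_i^h\cdots q_i^\lambda$ of the other paths. Maximality of $h$ makes these paths induced, with $q_i^hq_j^h$ as the only possible cross-edges. So an independent or linked fan appears immediately, including in the case $C=\{(1,1)\}$ (with center $q_0^1$), and no layering argument is needed.

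You instead use the BFS layering to restrict cross-edges to positions with $|i-k|\le 1$. You keep the fan at the common vertex when $C\subseteq\{(1,1)\}$. Otherwise you pick a vertex adjacent to position at least $2$ on all later (or all earlier) paths, so the remaining leaves lie within distance $\ell-1\le r-1$ of it, and you invoke the induction hypothesis. This spares you from checking inducedness and disjointness of re-rooted paths. The price is that your Ramsey step must now deliver about $F_{r-1}(m)+1$ leaves instead of $m+1$, which is why your $F$ becomes a tower in $r$. The paper's $F$ only multiplies by a Ramsey number in $m$ at each level.

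One small fix: your ``shared parent'' recursion is circular when $t=r=1$, since the parent is $o$ itself. The base case should simply be Ramsey on $X$: a clique gives a linked $1$-fan, an independent set gives an independent one.
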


\begin{proof}
  We prove the lemma by induction on \(r\). The case \(r=0\) is trivial. Assume
  \(r\ge 1\), and choose for every \(x\in X\) a shortest \(o\)-\(x\) path in a
  fixed breadth-first search tree rooted at \(o\).

  For every child \(s\) of \(o\) in this tree, let \(X_s\) be the set of terminals
  \(x\in X\) whose chosen path starts with \(s\). If some \(X_s\) is sufficiently
  large, then all vertices of \(X_s\) are at distance at most \(r-1\) from \(s\),
  and the result follows from the induction hypothesis applied with root \(s\).

  Thus, by taking \(F(m)\) large enough, we may assume that there are many
  children \(s\) with \(X_s\ne\emptyset\). Choose one terminal from each of those many different
  sets \(X_s\). After a pigeonhole argument, we may assume that the chosen
  root-to-terminal paths all have the same length \(\lambda\le r\). These tree paths
  meet only in \(o\).

  Write these paths as
  \[
      Q_i \quad = \quad o q_i^1 q_i^2\cdots q_i^\lambda=x_i .
  \]
  For two ordered paths \(Q_i,Q_j\), \(i<j\), color the pair \((i,j)\) by the set
  \[
      C_{ij}:=\{(a,b)\in[\lambda]^2 : q_i^a q_j^b\in E(G)\}.
  \]
  There are only finitely many colors, so Ramsey's theorem lets us pass to a subset of many paths
  on which this color is constant; call the common set \(C\).

  If \(C=\emptyset\), then the paths \(Q_i\) themselves form an independent fan
  with center \(c\coloneq o\).
  Otherwise, \(C\ne\emptyset\) and
  \[
      h \coloneq \max\{a : (a,b)\in C \text{ or } (b,a)\in C \text{ for some } b\}.
  \]
  is well defined.
  Intuitively, $h\in[\lambda]$ is the rightmost position on the Ramseyed paths that is incident to ``cross-path'' edges.
  In particular, there exists $a \in [\lambda]$ such that either \((a,h)\in C\) or \((h,a)\in C\). 
  In the first case, take the first path \(Q_0\) as a root path set
  put \(c \coloneq q_0^a\). For every other chosen path \(Q_i\), use the path
  \[
      c\, q_i^h q_i^{h+1}\cdots q_i^\lambda .
  \]
  Otherwise, \((h,a)\in C\) and we take the last chosen path as the root path and argue symmetrically.

  By maximality of \(h\), there are no edges between the suffixes
  \(q_i^h q_i^{h+1}\cdots q_i^\lambda\) and \(q_j^h q_j^{h+1}\cdots q_j^\lambda\),
  except possibly the edges \(q_i^h q_j^h\). If these edges are absent, we get an
  independent fan. If they are present, we get a linked fan. This
  proves the lemma.
\end{proof}

Our initial idea towards proving \Cref{lem:extract-patterns} was to extract subdivided cliques and webs from induced shallow minor models of subdivided cliques, by applying the fan lemma to the branchsets of the principal vertices together with a set $X$ of outgoing edges towards the subdivision vertices.
This idea does not work in this naive way, because the fan lemma restricts the number of subdivided connections to a smaller set $X'$. 
When applying it to multiple branchsets, we get no guarantees that the resulting $X'$ sets will be consistent, i.e., be able to form the edgeset of a complete graph.
We will circumvent this issue by working in a bipartite setting.

\begin{definition}[Bipartite patterns]
  Fix $r \geq 3$ and $n,m\in\N$. We define:
  \begin{itemize}
    \item $\biclique{r}{n}{m}$ to be the \emph{$r$-subdivided biclique} of order $n$, which is obtained by subdividing each edge of the biclique $K_{n,m}$ exactly $r$ many times.
    \item $\biweb{r}{n}{m}$ to be the \emph{$r$-biweb} of order $n$, which is obtained from $\biclique{r}{n}{m}$ by turning the neighborhoods of the principal vertices on both sides of the bipartition into cliques.
    \item $\mixweb{r}{n}{m}$ to be the \emph{$r$-mixweb} of order $n$, which is obtained from $\biclique{r}{n}{m}$ by only turning the neighborhoods of the principal vertices on the size $n$ side of the bipartition into cliques.
  \end{itemize}
\end{definition}

See \Cref{fig:bipartite-patterns} for an illustration of the bipartite patterns.

\begin{figure}[htbp]
  \centering
  \includegraphics[scale = 0.7]{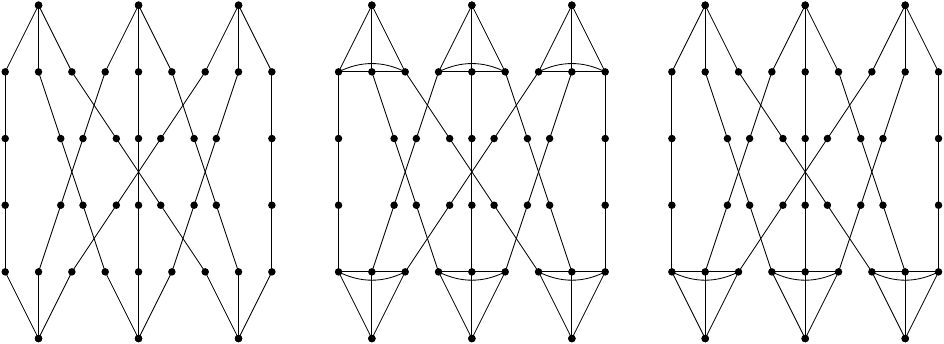}
  \caption{$\biclique{4}{3}{3}$, $\biweb{4}{3}{3}$, and $\mixweb{4}{3}{3}$.}
  \label{fig:bipartite-patterns}
\end{figure}

Observe that asymptotically, it does not matter whether we exhibit patterns or bipartite patterns.

\begin{observation}\label{obs:bipartize}
  Fix $r \geq 3$ and $n,m\in\N$.

  \smallskip\noindent
  Patterns contain bipartite patterns:
  \begin{enumerate}
    \item $\clique{r}{n+m}$ contains $\biclique{r}{n}{m}$ as an induced subgraph.
    \item $\web{r}{n+m}$ contains $\biweb{r}{n}{m}$ as an induced subgraph.
  \end{enumerate}

  \smallskip\noindent
  Bipartite patterns contain patterns:
  \begin{enumerate}
    \setcounter{enumi}{2}
    \item $\biclique{r}{n}{\binom{n}{2}}$ contains $\clique{2r+1}{n}$ as an induced subgraph.
    \item $\biweb{r}{n}{\binom{n}{2}}$ contains $\web{2r}{n}$ as an induced subgraph.
    \item $\mixweb{r}{n}{\binom{n}{2}}$ contains $\clique{2r}{n}$ as an induced subgraph.
  \end{enumerate}
  
\end{observation}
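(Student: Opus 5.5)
All five items are proved by exhibiting explicit induced subgraphs, that is, by choosing a vertex set to delete. The key structural fact is this: in $\clique{r}{\cdot}$ and its bipartite variants, the only edges are the edges along the subdivision paths. In the web variants there are additionally the clique edges among the \emph{first} subdivision vertices next to a principal vertex. So after deleting vertices, we only need to track which path segments and which clique edges survive.

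\textbf{Items 1 and 2.} Partition the $n+m$ principal vertices of $\clique{r}{n+m}$ into parts $U$ and $W$ with $|U|=n$ and $|W|=m$. Delete all internal vertices of the subdivision paths whose two ends lie in the same part. What remains is every principal vertex together with the full $r$-subdivided path for every pair in $U\times W$. In the subdivided clique there are no further edges, so we obtain exactly $\biclique{r}{n}{m}$. In $\web{r}{n+m}$, the surviving neighbours of any principal vertex are still pairwise adjacent. Every clique edge of the web joins two first vertices at a common principal vertex, so no other extra edges survive. This gives $\biweb{r}{n}{m}$.

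\textbf{Item 3.} Let $U=\{u_1,\dots,u_n\}$ be the side of size $n$, and index the other side as $W=\{w_{ij}\colon 1\le i<j\le n\}$. For each $w_{ij}$, keep $w_{ij}$ and the two paths from $w_{ij}$ to $u_i$ and to $u_j$; delete the internal vertices of all paths from $w_{ij}$ to $u_k$ with $k\notin\{i,j\}$. Then $w_{ij}$ has degree $2$, and $u_i$ and $u_j$ are joined by an induced path with $r+1+r=2r+1$ internal vertices. Distinct pairs use internally disjoint paths with no edges between them. So the result is $\clique{2r+1}{n}$ with principal vertices $u_1,\dots,u_n$.

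\textbf{Items 4 and 5.} Perform the same deletion, and then additionally delete every vertex $w_{ij}$. In $\biweb{r}{n}{\binom n2}$ the two surviving neighbours $a_{ij}$ (on the path to $u_i$) and $b_{ij}$ (on the path to $u_j$) of $w_{ij}$ are adjacent, because $N(w_{ij})$ is a clique. Hence, after deleting $w_{ij}$, the vertices $u_i$ and $u_j$ are joined by the path $u_i\cdots a_{ij}b_{ij}\cdots u_j$ with exactly $r+r=2r$ internal vertices. At each $u_i$, the surviving first vertices still form a clique, so we obtain $\web{2r}{n}$. For the mixweb we use the same construction. The pair-indexed side is the one whose neighbourhoods are cliques, so that each $a_{ij}b_{ij}$ is an edge and the two half-paths fuse into a path with $2r$ internal vertices. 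The principal vertices $u_i$ are chosen on the side without cliques, so no web edges appear at the $u_i$, and the result is $\clique{2r}{n}$.

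\textbf{Expected obstacle.} The only delicate step is this orientation in the mixweb case. Which side of $\mixweb{r}{n}{\binom n2}$ carries the cliques decides whether the $u_i$ see cliques (giving a web) or the midpoints do (giving a subdivided clique after deleting $w_{ij}$). I would fix the convention so that the pair-indexed side $W$ is the one with clique neighbourhoods, and then verify inducedness. Checking inducedness everywhere amounts to noting that every remaining clique edge lies inside a single $N(u_i)$ or inside a single surviving $\{a_{ij},b_{ij}\}$.
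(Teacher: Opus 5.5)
Your proof is correct. It is the direct verification the paper leaves implicit: the statement is recorded as an observation with no proof, and your explicit vertex deletions for items 1--5 check exactly what is needed.

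One remark on the orientation issue you flagged in item~5. The paper's definition puts the cliques of $\mixweb{r}{n}{m}$ on the size-$n$ side. In $\mixweb{r}{n}{\binom n2}$ that means the cliques sit at the $u_i$. Under that literal reading, item~5 is actually false for $n\ge 4$:
\begin{itemize}
\item A principal vertex of an induced $\clique{2r}{n}$ needs $n-1\ge 3$ pairwise non-adjacent neighbours. In the mixweb only the $w$'s have this: $N(u_i)$ is a clique, a neighbour of $u_i$ has independence number at most $2$ in its neighbourhood, and all other vertices have degree~$2$.
\item Every connecting path between two $w$'s must enter some clique $\{u_i\}\cup N(u_i)$ at an internal vertex.
\item Two connecting paths meeting the same such clique would share a vertex or be joined by an edge, so each clique serves at most one path. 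This gives at most $n<\binom n2$ paths, a contradiction.
\end{itemize}
So putting the cliques on the $\binom n2$-side is forced, not a free convention. You should say so explicitly rather than just fixing it.

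This costs the paper nothing. Item~5 is only applied to the $\mixweb{r}{m}{m}$ produced by Lemma~\ref{lem:extract-bipatterns}, whose two sides have equal size. From it one can pass to an induced copy with $\binom n2$ principal vertices on the clique side, which is precisely the object your construction handles.
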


By the above observation, proving \Cref{lem:extract-patterns} is reduced to proving the following lemma.

\begin{lemma}\label{lem:extract-bipatterns}
  For every $d,m\in\N$, there is $M \in \N$ with the following property.
  If a graph $G$ contains $\biclique{3}{M}{M}$ as an $d$-shallow induced minor, then there is $3 \leq r \leq 11d$ such that $G$ contains
  $\biclique{r}{m}{m}$, $\biweb{r}{m}{m}$, or $\mixweb{r}{m}{m}$
  as an induced subgraph.
\end{lemma}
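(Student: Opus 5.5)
Let $\phi$ be a $d$-shallow induced model of $\biclique{3}{M}{M}$ in $G$, with sides $U,W$ of principal vertices, $|U|=|W|=M$. For $u\in U$, $w\in W$ the subdivided path $u\,s^1_{uw}s^2_{uw}s^3_{uw}\,w$ has its three subdivision vertices mapped to branch sets of radius at most $d$. The plan is to replace every branch set by an induced path. For each pair $uw$ we pick a shortest path $Q_{uw}$ inside $G[\phi(s^1_{uw})\cup\phi(s^2_{uw})\cup\phi(s^3_{uw})]$. It runs from a vertex $a_{uw}$ adjacent to $\phi(u)$ to a vertex $b_{uw}$ adjacent to $\phi(w)$, and has length at most $3(2d+1)$. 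We also fix neighbours $a'_{uw}\in\phi(u)$ and $b'_{uw}\in\phi(w)$. Since the model is induced, the path $Q_{uw}$ has no edges to $\phi(u')$ or $\phi(w')$ for $u'\neq u$, $w'\neq w$. The middle set $\phi(s^2_{uw})$ keeps $Q_{uw}$ away from $Q_{uw'}$ and $Q_{u'w}$, except near its ends.

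\textbf{Step 1: fans inside principal branch sets.} Fix $u\in U$ with centre $c_u$. The vertices $a'_{uw}$, $w\in W$, are all at distance at most $d$ from $c_u$ in $G[\phi(u)]$. Applying \cref{lem:fan-ramsey} (with $r=d$) inside $G[\phi(u)]$ gives an $\ell_u$-fan on a large subset of these endpoints. The difficulty is consistency: different $u$ keep different subsets of $W$. To avoid this we work iteratively with a product Ramsey argument. We first shrink $W$ and process $u_1$, then restrict to the surviving $W$ and process $u_2$, and so on. We take $M$ as a tower so that after processing $m$ vertices of $U$ about $m'$ vertices of $W$ survive. Symmetrically we process those $m'$ vertices of $W$ against the $m$ chosen vertices of $U$, after first shrinking further. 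By pigeonhole all fan lengths on each side become equal, say $\ell_U,\ell_W\le d$, and each fan type (independent or linked) becomes constant per side. Note that the fan at $u$ lives inside $\phi(u)$, so it is automatically anticomplete to $\phi(u')$ and, by inducedness, to every $Q_{u'w}$.

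\textbf{Step 2: connectors and cleaning.} We concatenate (fan path at $u$) $+\,a'_{uw}a_{uw}$ $+\,Q_{uw}$ $+\,b_{uw}b'_{uw}$ $+$ (fan path at $w$). This gives a walk of length between roughly $3$ and $2d+6d+5\le 11d$ (for $d\ge1$). It is not necessarily induced, and connectors may touch each other: $Q_{uw}$ and $Q_{uw'}$ can be adjacent via $\phi(s^1)$ parts, which are all adjacent to $\phi(u)$. To clean up we colour pairs and triples of indices by the finite adjacency pattern between the connector paths $Q_{uw}$ and $Q_{u'w'}$, recorded position by position (bounded length, so finitely many colours). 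We then apply bipartite/product Ramsey to get homogeneity. Two disjoint index pairs have anticomplete connectors by inducedness, because $\phi(s^i_{uw})$ is non-adjacent to $\phi(s^j_{u'w'})$. So only connectors sharing $u$, or sharing $w$, can interact, and homogeneity makes these interactions a fixed pattern across all pairs. Exactly as in the proof of \cref{lem:fan-ramsey}, we take the extreme positions carrying cross-edges and shortcut. Every connector sharing $u$ is rerouted to start at its last position adjacent to a sibling connector, which absorbs the interaction into the fan at $u$. The result at $u$ is either an independent fan (subdivided biclique side) or a linked fan (web side), and likewise at $w$. We shortcut to induced paths and apply pigeonhole to make all lengths equal to a common $r$; we choose shortcuts so that $r\ge3$ is kept, which is possible since the middle part through $\phi(s^2)$ always survives. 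The four combinations of fan types yield $\biclique{r}{m}{m}$, $\biweb{r}{m}{m}$, or $\mixweb{r}{m}{m}$ (possibly with the sides swapped, which is symmetric).

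The main obstacle is Step 2. The interactions between connectors of the same principal vertex, combined with the fans from Step 1, must produce a \emph{single} clean fan per principal vertex, uniformly across all $u$ and $w$ simultaneously, while keeping paths induced and all lengths equal. I would handle this by merging Steps 1 and 2. At each principal vertex we apply \cref{lem:fan-ramsey} directly to the whole tree formed by the BFS tree in $\phi(u)$ together with the attached connectors, rooted at $c_u$, at depth at most $11d$. The consistency across different $u$ and $w$ is then obtained by the iterated product Ramsey with tower-type $M$.
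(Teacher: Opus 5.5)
There is a genuine gap, located at the junction between the fans and the connectors.

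\textbf{Step 1 can fail outright.} You apply \cref{lem:fan-ramsey} inside $G[\phi(u)]$ with leaves the attachment points $a'_{uw}\in\phi(u)$. These need not be distinct. All connectors $Q_{uw}$ may attach to the same few vertices of $\phi(u)$; for instance, $\phi(u)$ may be a single vertex. Then there is no large leaf set to which the fan lemma can be applied.

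\textbf{Even with distinct leaves, the junctions are uncontrolled.} The first connector vertex $a_{uw}$ may have further neighbours in $\phi(u)$: at the centre, on its own fan path, or on the fan paths to $a'_{uw'}$ for $w'\neq w$. A fan computed in $G[\phi(u)]$ without looking at the vertices $a_{uw}$ gives no control over these edges. So the concatenated walks are neither induced nor pairwise anticomplete. Shortcutting each walk separately does not remove an edge from $a_{uw}$ to another pair's fan path.

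\textbf{Step 2 addresses the wrong interactions.} You claim that $Q_{uw}$ and $Q_{uw'}$ can be adjacent via the $\phi(s^1)$ parts. This never happens. For $(u,w)\neq(u',w')$, the subdivision vertices $s^t_{uw}$ and $s^{t'}_{u'w'}$ are non-adjacent in $\biclique{3}{M}{M}$. In an induced model their branch sets are therefore disjoint and anticomplete, so all connectors are pairwise anticomplete. Your Ramsey colouring of connector pairs thus homogenizes a pattern that is always empty. The edges that actually cause trouble, between $a_{uw}$ and the fan at $u$, are not captured by it. Your closing suggestion to ``merge Steps 1 and 2'' points in the right direction. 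However, it never specifies the leaves, nor how the resulting $u$-side and $w$-side structures fit together.

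\textbf{The missing idea} is to put the first connector vertex into the graph on which the fan lemma is applied, and to use it as the leaf. At $u$, apply \cref{lem:fan-ramsey} to $G[\phi(u)\cup\{a_{uw}\colon w\in J\}]$, which has radius at most $d+1$, with leaf set $\{a_{uw}\colon w\in J\}$. Symmetrically, at $w$ use leaves $b_{uw}$. This choice gives three guarantees.
\begin{enumerate}
\item The leaves are automatically distinct and pairwise non-adjacent, since they lie in branch sets of distinct, non-adjacent subdivision vertices.
\item The fan makes the paths ending at them induced and pairwise anticomplete, up to the linked first layer.
\item By minimality of $Q_{uw}$, no other connector vertex has a neighbour in $\phi(u)$ or $\phi(w)$.
\end{enumerate}
Consequently, the concatenation of a fan path, the interior of the connector, and a fan path is already induced. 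Pairs of such paths interact only through a common fan. What remains is the iterative restriction of the index sets you describe, plus Ramsey/pigeonhole on connector lengths, fan lengths and fan types. No cleaning step is needed.
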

\begin{proof}
  We can assume $G$ contains the $3$-subdivided biclique $H \coloneq \biclique{3}{m_A}{m_B}$ as an induced $d$-shallow minor for $m_A$ and $m_B$ arbitrarily large in comparison to $m$.
  We use $a_i$ and $b_j$ to denote its principal vertices and $s_{ij}^t$ to denote its subdivision vertices for $i \in [m_A]$, $j \in [m_B]$, $t\in [3]$, i.e., $a_i s_{ij}^1 s_{ij}^2 s_{ij}^3 b_j$ is a path.
  Let $\phi$ be the witnessing model, assigning to each vertex of $H$ its branch set in $G$, and denote

  \[
    A_i \coloneq \phi(a_i), \quad
    B_j \coloneq \phi(b_j), \quad
    S^t_{ij} \coloneq \phi(s_{ij}^t),\quad
    S_{ij} \coloneq \bigcup_{t\in [3]} S^t_{ij}.
  \]

  For each pair $i,j$ we fix a path $P_{ij}$ with minimal length, such that one endpoint lies in $A_i$, the other in $B_j$, and all the inner vertices lie in $S_{ij}$.
  By minimality, inducedness of $\phi$, and the structure of $H$, one can verify that each $P_{ij}$ is an induced path in $G$ that can be written as a concatenation
  \[
    P_{ij} = a_{ij} x_{ij} R_{ij} y_{ij} b_{ij}
  \]
  satisfying
  \[
    a_{ij} \in A_i, \quad
    x_{ij} \in S^1_{ij}, \quad
    R_{ij} \subseteq S_{ij}, \quad
    y_{ij} \in S^3_{ij}, \quad
    b_{ij} \in B_j.
  \]
  In particular, each $P_{ij}$ contains at least $5$ vertices. 
  Additionally by $d$-shallowness and minimality, $P_{ij}$  contains at most $1 + 3(2d+1) + 1 = 6d+5$ vertices.

  Since only a constant number of path lengths are possible (for a fixed $d$), we can apply Ramsey's theorem for bicliques, and as a result assume that all path $P_{ij}$ have the same length $5\leq \ell_P \leq 6d+5$.
  (Formally, by applying Ramsey's theorem we need to pass to a smaller biclique, but this is no problem as $m_A$ and $m_B$ can be chosen arbitrarily large.)

  Next for an index set $J \subseteq [m_B]$, we define $X_i(J) \coloneq \{x_{ij} \mid j \in J\}$.
  Let $J_0 \coloneq [m_B]$ and consider the set
  \[
    A_1^+ \coloneq A_1 \cup X_1(J_0).
  \]
  The induced subgraph $G[A_1]$ has radius at most $d$ by $d$-shallowness of $\phi$.
  By construction, each $x_{1j}$ is adjacent to $A_1$ and hence $G[A^+_1]$ has radius at most $d+1$.
  Applying the fan lemma (\Cref{lem:fan-ramsey}) to the graph $G[A^+_1]$ and vertices $X_1(J_0)$, yields a large subset $J_1 \subseteq J_0$, such that $A_1^+$ contains an $\ell$-fan with leaves $X_1(J_1)$ for some $\ell \in [d+1]$.
  We can iterate this process on the remaining $A_i$ branchsets, where in each step we further restrict the eligible branchsets $B_j$, to those that all the previously extracted fans can reach through their leaves.
  Formally, the set $J_i \subseteq J_{i-1}$ is always created by applying the fan lemma to the graph $G[A_i^+]$ and vertices $X_i(J_{i-1})$, where $A_{i}^+ \coloneq A_i \cup X_{i}(J_{i-1})$.
  By choosing $m_A$ and $m_B$ sufficiently large, we can assume that after $m_A$ steps, we still have $m$ eligible indices $J \coloneq J_{m_A}$ left.
  Relabeling the indices such that $J = [m]$, we can repeat the argument symmetrically on the $B$-side using $I_0 \coloneq [m_A]$ and for all $j \in J$
  \[
    Y_j(I) \coloneq \{y_{ij} \mid i \in I\},\quad
    B_{j}^+ \coloneq B_j \cup Y_{j}(I_{j-1}).
  \]
  In conclusion, we obtain two size $m$ sets $I \subseteq [m_A]$ and $J \subseteq [m_B]$ such that for all $i\in I$ and $j\in J$
  \begin{itemize}
    \item each set $A_i \cup X_i(J)$ contains a fan $F_i^A$ with leaves $X_i(J)$, and
    \item each set $B_j \cup Y_j(I)$ contains a fan $F_j^B$ with leaves $Y_j(I)$.
  \end{itemize}
  By the pigeonhole principle, we can assume that all the $F_i^A$ are $\ell_A$-fans for the same length $\ell_A \in [d+1]$, and of the same type $\tau_A \in\{\mathrm{independent}, \mathrm{linked}\}$.
  We can assume the same for all the $F_j^B$ fans (for some possibly different length $\ell_B$ and type $\tau_B$).
  We claim that the subgraph of $G$ induced by the set
  \[
    \bigcup_{i \in I, j \in J} F_i^A \cup R_{ij} \cup F_j^B
  \]
  forms either $\biclique{r}{m}{m}$, $\biweb{r}{m}{m}$, or $\mixweb{r}{m}{m}$, depending on $\tau_A$ and $\tau_B$, for $r \coloneq \ell_A + (\ell_P - 4) + \ell_B$, which satisfies 
  \[
    3 \leq \underbrace{\ell_A + (\ell_P - 4) + \ell_B}_{r} \leq (d+1) + (6d+5 - 4) + (d+1) = 8d + 3 \leq 11d.
  \]
  To see this, note first that by minimality of $P_{ij}$, no vertex of $R_{ij}$ has a neighbor in $A_i$ or in $B_j$: an edge from such a vertex $v$ to some $u \in A_i$ would yield the strictly shorter $A_i$-$B_j$ path obtained by following $uv$ and then the suffix of $P_{ij}$ from $v$, still with all inner vertices in $S_{ij}$; the case of $B_j$ is symmetric.
  Since $\phi$ is induced, the only edges of $G$ between distinct branch sets are those between branch sets adjacent in $H$, that is, along $A_i \sim S^1_{ij} \sim S^2_{ij} \sim S^3_{ij} \sim B_j$.
  Hence, writing $c^A_i$ and $c^B_j$ for the centers of $F^A_i$ and $F^B_j$, concatenating 
  \begin{center}
    the $c^A_i$-$x_{ij}$ path in $F^A_i$ \qquad and \qquad
    $R_{ij}$ \qquad and \qquad
    the $y_{ij}$-$c^B_j$ path in $F^B_j$
  \end{center}
  produces an induced $c^A_i$-$c^B_j$ path $Q_{ij}$ with exactly $\ell_A + (\ell_P - 4) + \ell_B$ internal vertices, and for $(i,j) \neq (i',j')$ the paths $Q_{ij}$ and $Q_{i'j'}$ meet only in shared centers, with no edges between them other than those internal to a common fan.
  Such a fan is either independent (no edges between its paths) or linked (its first vertices form a clique, turning that center into a webbed one).
  Therefore the induced subgraph is $\biclique{r}{m}{m}$, $\biweb{r}{m}{m}$, or $\mixweb{r}{m}{m}$ according to whether none, both, or exactly one of $\tau_A, \tau_B$ is linked.
\end{proof}

\section{FSMF implies pattern-free}

\begin{lemma}\label{lem:fsmf-implies-pattern-free}
    For every hereditary FMSF graph class $\CC$ and every $r \geq 3$, there is $k \in \N$ such that $\CC$ excludes $\clique{r}{k}$ and $\web{r}{k}$.
\end{lemma}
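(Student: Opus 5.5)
We argue by contraposition. Suppose that for some $r\ge 3$ the class $\CC$ contains $\clique{r}{k}$ or $\web{r}{k}$ for every $k$. Since $\CC$ is hereditary and large patterns contain smaller ones, at least one of the two families lies entirely in $\CC$. Fix any $\delta\in\N$. We will find an $\alpha$, depending only on $r$ and $\delta$, such that every graph $H$ is an $\alpha$-shallow $\delta$-fat minor of some member of $\CC$. This contradicts \cref{def:fsmf}.

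The key step is to pass to a larger subdivision. Let $q=\delta r$, or more precisely any $q\geq 3$ with $q+1$ a multiple of $r+1$, chosen large compared to $\delta$. We claim that $\clique{q}{n}$ (resp.\ $\web{q}{n}$) is an induced subgraph of $\clique{r}{N}$ (resp.\ $\web{r}{N}$) for some large $N$. The construction routes each long connection through intermediate principal vertices. Write $q+1=t(r+1)$. For each pair $\{i,j\}$ of the $n$ chosen principals, pick $t-1$ fresh principal vertices $w_1,\dots,w_{t-1}$. Keep the subdivided paths $i\,w_1\cdots w_{t-1}\,j$, and delete all other principals and subdivision vertices.

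In the $K$ case, each kept fresh principal has degree $2$, so the result is an induced copy of $\clique{q}{n}$. In the $W$ case, the neighbourhood of a fresh principal is a clique, so its two kept neighbours are adjacent. This creates a shortcut, and the path is no longer induced. Two remedies are available. The first is to route so that the shortcut is exactly accounted for: the effective length per hop drops to $r$, so we choose $t$ with $q+1=tr+1$ or similar, and the original principals still get clique neighbourhoods. The second, simpler option is to avoid passing through principals altogether. This means checking the exact construction, and I expect the bookkeeping of the $W$ case to be the main nuisance.

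Alternatively, and more robustly, we can skip the lengthening and build the fat minor directly inside $\clique{r}{N}$ or $\web{r}{N}$, as in \Cref{fig:patterns}. Given $H$ on $n$ vertices, map each vertex $v$ to the star consisting of a principal vertex together with the first few vertices of each incident subdivided path. For each edge $uv$ of $H$, use an intermediate chain of principals: the $u$-path leads to a fresh principal $w$, and so on, and the edge's branch set is the middle portion of this chain. The chain length is chosen so that the middle portion is at distance at least $\delta$ from everything non-incident. Distances in $\web{r}{N}$ are at most those in $\clique{r}{N}$, and the cliques only shortcut near principals. Each detour near a principal shortens distance by at most $1$ per principal, so the chain length $L$ must be chosen as a function of $r$ and $\delta$ with margin. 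Distance between non-incident branch sets then grows linearly in $L$. Radii are $O(L r)$, which gives $\alpha=O(\delta r)$, independent of $H$.

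Putting it together: the graph obtained is an induced subgraph of a pattern, hence lies in $\CC$, and it contains every $H$ as an $\alpha$-shallow $\delta$-fat minor, with $\alpha$ depending only on $r$ and $\delta$. Since $\delta$ was arbitrary, $\CC$ is not FSMF. By \cref{lem:equiv-fat-minors}, it does not matter whether fatness is measured in the graph or in the metric graph. The main obstacle is the precise verification of $\delta$-fatness in the web case, where principal neighbourhoods are cliques. I would handle it by placing all fresh intermediate principals far (at least $\delta$) from the branch sets, so that each clique shortcut saves at most $1$ per hop.
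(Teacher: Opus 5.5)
Your proposal is correct and is essentially the paper's argument. The paper also lengthens the subdivision by routing each pair of principals through fresh principals inside an induced subgraph, though it does this by iterated doubling through bipartite patterns ($\clique{2r+1}{n}$ inside $\biclique{r}{n}{\binom{n}{2}}$ and $\web{2r}{n}$ inside $\biweb{r}{n}{\binom{n}{2}}$) rather than your one-shot $t$-hop routing. It then uses that $\clique{q}{n}$ and $\web{q}{n}$ contain every $n$-vertex graph as a $(\lfloor q/4\rfloor+10)$-shallow $(\lfloor q/2\rfloor-10)$-fat minor.

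Two points in your write-up need tightening. The web-case bookkeeping you left open is settled by your second remedy: delete each intermediate principal, whose two kept neighbours are adjacent, which yields $\web{tr}{n}$. Also, fatness must be measured in this induced subgraph, not in the full $\clique{r}{N}$ or $\web{r}{N}$ as your ``build directly inside'' paragraph suggests, because there any two principals are at distance at most $r+1$. This is exactly where heredity is used.
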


We will prove the lemma by contraposition, showing how to extract shallow minors that are arbitrarily large and fat from the forbidden patterns.
Observe in \Cref{fig:patterns} that in $\clique{r}{k}$ and $\web{r}{k}$, we can find minors with fatness proportional to $r$, by choosing the branchsets of the edges to contain roughly the $r/2$ many middle vertices on the subdivided paths.
More precisely, we observe the following. 
(The constant $10$ can be chosen smaller, but its exact value is finicky to compute and does not matter to us.)

\begin{observation}\label{obs:encode-fat-minors}
    $\clique{r}{n}$ and $\web{r}{n}$ both contain every $n$-vertex graph as a $(\lfloor r/4 \rfloor + 10)$-shallow $(\lfloor r/2 \rfloor - 10)$-fat minor. 
\end{observation}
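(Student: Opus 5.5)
It suffices to handle $H = K_n$. Once $K_n$ is a shallow fat minor, \cref{lem:minor-of-fat-minor} (first part, which preserves the branch sets of the surviving objects) gives every $n$-vertex graph by simply deleting unused edge branch sets. The model is the one suggested by \Cref{fig:patterns}, and I would define it uniformly for $\clique{r}{n}$ and $\web{r}{n}$. Let $p_1,\dots,p_n$ be the principal vertices. For $i<j$, let $Q_{ij}=p_i\, s^1_{ij}\cdots s^r_{ij}\, p_j$ be the subdivided path; it has $r$ internal vertices. Fix $t\coloneqq\lfloor r/4\rfloor$. I would set
\[\phi(p_i)\coloneqq\{p_i\}\cup\{\text{the first $t$ internal vertices of $Q_{ij}$ from the $p_i$ side}\colon j\neq i\},\]
and let $\phi(p_ip_j)$ be the internal vertices of $Q_{ij}$ at positions $t,\dots,r+1-t$. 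This middle segment shares exactly one vertex with each endpoint star, so the incidence requirement holds.

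\textbf{Shallowness.} Each $\phi(p_i)$ is a star of paths of length $t$ centred at $p_i$, so its radius is at most $t$. Each $\phi(p_ip_j)$ is a path with about $r/2+1$ vertices. Its radius is therefore about $r/4+1\leq\lfloor r/4\rfloor+10$. In $\web{r}{n}$ the extra clique edges lie only among the neighbours of principal vertices, and these edges only shrink radii.

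\textbf{Fatness.} This is the main check: any two non-incident branch sets must be at distance at least $\lfloor r/2\rfloor-10$. I would first bound distances in $\clique{r}{n}$ and then argue that the web edges change distances by at most a small additive constant.

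In the web, each clique on $N(p_i)$ is dominated by $p_i$, so a shortcut through a clique edge saves at most one step compared to passing through $p_i$. Equivalently, webbing behaves like contracting stars of radius $1$. Hence any path in the web lifts to a path in the subdivided clique, inserting at most one detour per visited principal vertex. A shortest path between two branch sets that passes through a principal vertex must traverse a long subdivided path. I would therefore argue that only $O(1)$ such shortcuts matter, and that each costs at most $1$ in distance.

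In $\clique{r}{n}$, any path between two non-incident branch sets must leave one set and walk along some $Q_{ab}$ until it reaches the other set. Consider the case of two edge sets $\phi(p_ip_j)$ and $\phi(p_ip_k)$. The path must travel from position $\geq t$ on $Q_{ij}$ back to $p_i$ and then out to position $t$ on $Q_{ik}$, for a length of about $2t\approx r/2$. Two vertex sets are separated by the middle portion of a path, of length about $r-2t\approx r/2$. A vertex set and a non-incident edge set $\phi(p_jp_k)$ are separated by at least the gap of about $r/2-t$ on the path from $p_i$, plus the stretch from $p_j$ to position $t$. In every case the distance is at least $r/2-O(1)$.

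Combining these bounds with the additive loss from the web edges gives fatness at least $\lfloor r/2\rfloor-10$. I expect bookkeeping of these constants, especially for the web, to be the only delicate part.
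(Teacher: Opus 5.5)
Your proposal is correct and follows the paper's argument; the paper gives no written proof beyond \Cref{fig:patterns}, which shows exactly your model: stars around the principal vertices, and middle segments of the subdivided paths as edge branch sets, each meeting the incident stars in one vertex. Your estimates hold, with the binding case being two edge branch sets at a common principal vertex, at distance $2t$ in $\clique{r}{n}$ and $2t-1$ in $\web{r}{n}$ (above $\lfloor r/2\rfloor-10$); the only blemish is that for $r=3$ your $t=0$ breaks the incidence requirement, which is harmless because the fatness requirement is vacuous there.
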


To show that a graph class $\CC$ is not FSMF, we have to show that for every $\delta \in \N$, there exists some $\alpha$ such that $\CC$ contains all graphs as $\delta$-fat $\alpha$-shallow minors.
By \Cref{obs:encode-fat-minors}, it suffices to show that $\CC$ contains $\clique{r}{n}$ or $\web{r}{n}$ for arbitrarily large values of both $n$ and $r$ simultaneously.
The next lemma shows how to ``boost'' from small values of $r$ to arbitrarily large ones.

\newcommand{\rsmall}{r_{\mathrm{small}}}
\newcommand{\rbig}{r_{\mathrm{big}}}
\newcommand{\rbigger}{r_{\mathrm{bigger}}}

\begin{lemma}
    For all naturals $\rbig \geq \rsmall \geq 3$ and every hereditary graph class $\CC$:
    \begin{itemize}
        \item If $\{\clique{\rsmall}{n} \mid n\in\N\} \subseteq \CC$, 
        then also $\{\clique{\rbigger}{n} \mid n\in\N\} \subseteq \CC$, for some $\rbigger \geq \rbig$.

        \item If $\{\web{\rsmall}{n} \mid n\in\N\} \subseteq \CC$, 
        then also $\{\web{\rbigger}{n} \mid n\in\N\} \subseteq \CC$, for some $\rbigger \geq \rbig$.
    \end{itemize}
\end{lemma}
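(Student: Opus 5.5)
The plan is to show that large $\rsmall$-subdivided cliques (respectively $\rsmall$-webs) contain, as induced subgraphs, large $r'$-subdivided cliques (respectively webs) with $r'$ much longer, by routing long paths through \emph{many} principal vertices. Since $\CC$ is hereditary, this suffices. The naive idea of concatenating subdivided paths through intermediate principal vertices fails: an intermediate principal vertex has all its other incident subdivided paths hanging off it. In the clique case these are harmless, because we simply delete them by passing to an induced subgraph. In the web case, however, deleting a neighbor of a principal vertex still leaves the clique among the remaining neighbors; we must check that the used path passes through exactly two of them, which are then adjacent. That adjacency is a chord, so a web path through an intermediate principal vertex $u$ from neighbor $a$ to neighbor $b$ shortcuts $u$ via the edge $ab$. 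This is fine, because the chord just gives an induced path $a\,b$ that bypasses $u$. We then delete $u$ as well, leaving a clean induced path. Either way we obtain long induced paths.

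Concretely, I would fix $t$ with $t(\rsmall+1)-1\ge\rbig$ and take $n$ large. In $\clique{\rsmall}{N}$ with $N=n+\binom{n}{2}(t-1)$, I choose $n$ principal vertices $p_1,\dots,p_n$ as the new principals. For each pair $\{i,j\}$ I reserve a private set of $t-1$ further principal vertices $u^{ij}_1,\dots,u^{ij}_{t-1}$ and use the chain $p_i,u^{ij}_1,\dots,u^{ij}_{t-1},p_j$, taking the subdivided paths between consecutive elements. The induced subgraph on the union of these paths is then exactly $\clique{\rbigger}{n}$. The reason is that distinct subdivided paths are internally disjoint with no edges between interiors, each intermediate $u$ is used by exactly two consecutive paths, and the only shared vertices are the $p_i$. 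This gives $\rbigger=t(\rsmall+1)-1\ge\rbig$, which depends only on $\rsmall,\rbig$, as required.

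For webs, I use the same chain but handle the intermediate vertices differently. Around each intermediate $u$, the two path-neighbors $a,b$ of $u$ are adjacent, while the other neighbors of $u$ are dropped. I delete $u$ and keep $a,b$, so the chain path is induced with length $t(\rsmall+1)-1-(t-1)$ edges between new principals. That is, $\rbigger=t\rsmall+1-1$ subdivision count, with the exact value to be computed. At each new principal $p_i$, the kept neighbors are exactly the first vertices of its $n-1$ chains, and these form a clique since they are a subset of $N(p_i)$, which is a clique in the web. I then need to verify there are no further edges. Edges in $\web{\rsmall}{N}$ beyond the subdivided clique occur only inside neighborhoods of principal vertices. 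Those around deleted or unused principals involve only vertices adjacent to that principal, and at an intermediate $u$ only $a,b$ are kept, contributing the single edge $ab$. The result is $\web{\rbigger}{n}$ with $\rbigger\ge\rbig$ for $t$ large.

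The main obstacle is this bookkeeping in the web case: confirming that the only kept edges are the path edges, the $ab$ bypass edges, and the cliques at new principals. I also need to confirm that $\rsmall\ge3$ ensures the neighbors of distinct principals never coincide or become adjacent, so that the neighborhood cliques don't interact.
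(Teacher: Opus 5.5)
Your proof is correct and is essentially the paper's argument. Via items 1--4 of the bipartization observation, the paper routes each pair of new principals through one private intermediate principal, obtaining $\clique{2r+1}{n}$ inside $\clique{r}{2\binom{n}{2}}$ and, by deleting the intermediate principal and using the edge of its neighborhood clique exactly as you do, $\web{2r}{n}$ inside $\web{r}{2\binom{n}{2}}$. It then iterates this doubling, whereas you use $t-1$ intermediates in a single step.

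The only slip is bookkeeping in the web case. The new subdivision count there is exactly $t\rsmall$; your expression $t(\rsmall+1)-1-(t-1)$ counts internal vertices, not edges. So $t$ must be chosen with $t\rsmall\ge\rbig$, which the $t$ fixed for the clique case need not satisfy (e.g.\ $\rsmall=3$, $\rbig=7$, $t=2$).
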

\begin{proof}
    Assume $\CC$ satisfies the premise of the first bullet.
    There exists $\rbigger \in \N$ that satisfies $\rbigger \geq \rbig$ and can be obtained by iterating the function $f(x) \coloneq 2x+1$ a finite number of times on $\rsmall$.
    Combining items 1. and 3. of \Cref{obs:bipartize}, we get that
    \begin{center}
        $\clique{f(\rsmall)}{n}$ is an induced subgraph of $\clique{\rsmall}{2\binom{n}{2}}$.
    \end{center}
    Since $\CC$ contains $\clique{\rsmall}{n}$ for all $n\in\N$, we can iterate the above observation and conclude that $\CC$ also contains as induced subgraphs $\clique{\rbigger}{n}$ for all $n\in\N$.
    As $\CC$ is hereditary, these induced subgraphs are also contained as graphs in $\CC$.
    
    The second bullet follows analogously by combining items 2. and 4. of \Cref{obs:bipartize}, yielding
    \begin{center}
        $\web{2 \rsmall}{n}$ is an induced subgraph of $\web{\rsmall}{2\binom{n}{2}}$.
    \end{center}
\end{proof}

As discussed, this proves \Cref{lem:fsmf-implies-pattern-free}.

\newcommand{\Bb}{{\cal B}}
\newcommand{\rad}{\mathrm{rad}}
\newcommand{\cen}{\mathrm{cen}}

\section{Ball intersection graphs}

\begin{figure}[htbp]
    \centering
    \begin{tikzpicture}[
    >=Latex,
    every node/.style={font=\small},
    smalllabel/.style={font=\scriptsize},
    tinylabel/.style={font=\tiny},
    vertex/.style={
        circle,
        fill=black,
        draw=white,
        line width=.35pt,
        inner sep=1.7pt
    },
    gammaz/.style={
        circle,
        fill=blue!75!black,
        draw=white,
        line width=.4pt,
        inner sep=2.3pt
    },
    gammay/.style={
        circle,
        fill=green!55!black,
        draw=white,
        line width=.4pt,
        inner sep=2.2pt
    },
    wy/.style={
        circle,
        fill=red!75!black,
        draw=white,
        line width=.4pt,
        inner sep=2.2pt
    },
    phiz/.style={
        draw=blue!65!black,
        fill=blue!8,
        line width=.8pt
    },
    phiy/.style={
        draw=green!55!black,
        fill=green!8,
        line width=.8pt
    },
    phie/.style={
        draw=orange!80!black,
        fill=orange!10,
        line width=.8pt
    },
    modelpath/.style={
        violet!75!black,
        line width=1.1pt,
        line cap=round
    },
    ambient/.style={
        fill=gray!5,
        draw=black!30,
        line width=.9pt
    },
    widehatball/.style={
        draw=black!55,
        fill=gray!6,
        line width=.9pt,
        dashed
    },
    unitball/.style={
        draw=orange!85!black,
        fill=orange!20,
        line width=.8pt
    },
    centralball/.style={
        draw=blue!70!black,
        fill=blue!13,
        line width=.8pt
    },
    bigwyball/.style={
        draw=red!70!black,
        fill=red!10,
        fill opacity=.32,
        line width=.8pt
    },
    smallball/.style={
        draw=black!45,
        fill=black!8,
        line width=.6pt
    },
    arrow/.style={
        ->,
        line width=.8pt,
        black!55
    }
]

\begin{scope}


\path[ambient]
    plot[smooth cycle,tension=.85] coordinates {
        (-.35,.00) (-.10,4.95) (1.20,5.25)
        (4.15,5.35) (7.40,4.85) (7.85,2.45)
        (7.35,.15) (4.25,-.25) (1.00,-.18)
    };

\node[font=\large] at (3.25,-0.55) {$G$};

\draw[phiz] (1.05,2.50) ellipse (.78 and 1.45);
\node[smalllabel,blue!65!black] at (.55,3.92) {$\phi(z)$};

\draw[phie] (3.35,4.10) ellipse (.78 and .43);
\draw[phie] (3.35,2.50) ellipse (.78 and .43);
\draw[phie] (3.35,.90) ellipse (.78 and .43);

\node[smalllabel,orange!85!black] at (3.35,3.48) {$\phi(zy_1)$};
\node[smalllabel,orange!85!black] at (3.35,1.88) {$\phi(zy_2)$};
\node[smalllabel,orange!85!black] at (3.35,.32) {$\phi(zy_3)$};

\draw[phiy] (6.25,4.25) ellipse (.78 and .48);
\draw[phiy] (6.25,2.50) ellipse (.78 and .48);
\draw[phiy] (6.25,.75) ellipse (.78 and .48);

\node[smalllabel,green!50!black] at (6.25,3.52) {$\phi(y_1)$};
\node[smalllabel,green!50!black] at (6.25,1.77) {$\phi(y_2)$};
\node[smalllabel,green!50!black] at (6.25,.08) {$\phi(y_3)$};

\coordinate (gz)  at (1.00,2.50);

\coordinate (w1)  at (3.00,4.10);
\coordinate (w2)  at (3.00,2.50);
\coordinate (w3)  at (3.00,.90);

\coordinate (gy1) at (6.30,4.25);
\coordinate (gy2) at (6.30,2.50);
\coordinate (gy3) at (6.30,.75);

\node[vertex] at (.75,3.20) {};
\node[vertex] at (1.32,1.70) {};
\node[vertex] at (.68,1.55) {};

\draw[modelpath]
    (gz)
    .. controls (1.70,3.35) and (2.35,4.05) .. (w1)
    .. controls (4.05,4.40) and (5.25,4.45) .. (gy1);

\draw[modelpath]
    (gz)
    .. controls (1.80,2.75) and (2.28,2.50) .. (w2)
    .. controls (4.10,2.45) and (5.25,2.50) .. (gy2);

\draw[modelpath]
    (gz)
    .. controls (1.65,1.70) and (2.32,.95) .. (w3)
    .. controls (4.05,.50) and (5.25,.55) .. (gy3);

\node[gammaz] at (gz) {};
\node[wy] at (w1) {};
\node[wy] at (w2) {};
\node[wy] at (w3) {};

\node[gammay] at (gy1) {};
\node[gammay] at (gy2) {};
\node[gammay] at (gy3) {};

\node[smalllabel,blue!70!black] at ($(gz)+(-.15,-.42)$) {$\gamma(z)$};

\node[smalllabel,red!75!black] at ($(w1)+(.32,.27)$) {$w_{y_1}$};
\node[smalllabel,red!75!black] at ($(w2)+(.32,.25)$) {$w_{y_2}$};
\node[smalllabel,red!75!black] at ($(w3)+(0.32,.15)$) {$w_{y_3}$};

\node[smalllabel,green!45!black] at ($(gy1)+(.10,.25)$) {$\gamma(y_1)$};
\node[smalllabel,green!45!black] at ($(gy2)+(.10,.25)$) {$\gamma(y_2)$};
\node[smalllabel,green!45!black] at ($(gy3)+(.10,.25)$) {$\gamma(y_3)$};

\node[smalllabel,violet!75!black] at (2.0,4.02)
    {$P_{y_1}$};
\node[smalllabel,violet!75!black] at (4.75,2.82)
    {$P_{y_2}$};
\node[smalllabel,violet!75!black] at (2.0,1.0)
    {$P_{y_3}$};


\end{scope}

\draw[arrow] (8.35,2.55) -- (9.60,2.55)
    node[midway,above,smalllabel,align=center]
    {realize \(G\) as\\ balls in \(\mathbb R^d\)};

\begin{scope}[shift={(9.45,0)}]


\def\Rhat{2.55}
\def\Unit{.34}

\coordinate (C) at (3.25,2.35);

\draw[widehatball] (C) circle (\Rhat);
\node[smalllabel] at ($(C)+(0,\Rhat-5.53)$)
    {$\widehat B$};

\draw[centralball] (C) circle (\Unit);
\node[smalllabel,blue!70!black] at ($(C)+(0,-.58)$)
    {$B_{\gamma(z)}$};

\coordinate (q1) at ($(C)+(.46,.35)$);
\coordinate (q2) at ($(C)+(.72,.80)$);
\coordinate (q3) at ($(C)+(.83,1.20)$);

\draw[smallball] (q1) circle (.23);
\draw[smallball] (q2) circle (.22);
\draw[smallball] (q3) circle (.22);

\draw[black!35,line width=.6pt]
    (C) -- (q1) -- (q2) -- (q3);


\coordinate (u2) at ($(C)+(1.05,1.55)$);

\draw[bigwyball] (u2) circle (.55);
\node[smalllabel,red!70!black] at ($(u2)+(-.78,.50)$)
    {$B_{w_{y_1}}$};

\draw[unitball,very thick] (u2) circle (\Unit);
\node[smalllabel,orange!85!black] at ($(u2)+(0,.08)$)
    {$B'_{y_1}$};

\draw[black!35,line width=.6pt] (q3) -- (u2);

\coordinate (u1) at ($(C)+(-1.45,1.05)$);
\coordinate (u3) at ($(C)+(1.75,-.65)$);
\coordinate (u4) at ($(C)+(-1.45,-1.15)$);
\coordinate (u5) at ($(C)+(.12,-1.72)$);
\coordinate (u6) at ($(C)+(-2.05,.05)$);

\draw[unitball] (u1) circle (\Unit);
\draw[unitball] (u3) circle (\Unit);
\draw[unitball] (u4) circle (\Unit);
\draw[unitball] (u5) circle (\Unit);
\draw[unitball] (u6) circle (\Unit);

\node[smalllabel,orange!85!black] at ($(u1)+(-.10,.58)$) {$B'_{y_3}$};
\node[smalllabel,orange!85!black] at ($(u3)+(.12,-.60)$) {$B'_{y_3}$};
\node[smalllabel,orange!85!black] at ($(u4)+(-.10,-.55)$) {$B'_{y_4}$};
\node[smalllabel,orange!85!black] at ($(u5)+(.15,-.55)$) {$B'_{y_5}$};

\draw[<->,black!55,line width=.65pt]
    (C) -- ($(C)+(\Rhat,0)$)
    node[midway,below,smalllabel] {$12r+5$};


\end{scope}

\end{tikzpicture}
    \caption{(Left) paths from $\gamma(z)$ to $\{\gamma(y_i)\}_{i=1}^3$ in $G$. (Right) the ball $\widehat{B}$ contains  $\{B'_{y_i}\}_{i=1}^3$.}
    \label{fig:balls}
\end{figure}

Finally, in this section we prove that the intersection graphs of balls in Euclidean spaces are shallow-induced-minor-free.


\ballsismf*
\begin{proof}
    We prove that for every $r\in \N$, if $G$ is an intersection graph of a family of balls in $\R^d$ that contains $K_h^{(1)}$ -- the $1$-subdivision of $K_h$ --- as an $r$-shallow induced minor, then $h\leq 1+(12r+5)^d$. Let then  $\phi$ be an $r$-shallow induced model of $K_h^{(1)}$ in $G$. By a slight abuse of notation, for an edge $e\in E(K_h)$ by $\phi(e)$ we denote the branch set of the vertex subdividing $e$ in $K_h^{(1)}$.

    Let $\{B_u\colon u\in V(G)\}$ be a family of balls in $\R^d$ such that $B_u$ and $B_v$ intersect if and only if $u$ and $v$ are adjacent in $G$. By enlarging the balls slightly, we may assume that the balls $B_u$ have pairwise different radii. 

    For every $x\in V(K_h)$, we define $\gamma(x)\in \phi(x)$ as the vertex of $\phi(x)$ such that the ball $B_{\gamma(x)}$ has the largest radius among the balls $B_u$ with $u\in \phi(x)$. Further, let $z$ be the vertex of $K_h$ for which the ball $B_{\gamma(z)}$ has the smallest radius among the balls $\{B_{\gamma(x)}\colon x\in V(K_h)\}$.

    Since $\phi$ is $r$-shallow,
    for every vertex $y\in V(K_h)$, $y\neq z$, we may fix a $\gamma(z)$-$\gamma(y)$-path $P_{y}$ in $G$ of length at most $6r+2$ whose all vertices are contained in $\phi(z)\cup \phi(zy)\cup \phi(y)$. See \Cref{fig:balls}. Let $w_y$ be the first vertex encountered on $P_{y}$ when traversed it from $z$ to $y$ such that the radius of $B_{w_y}$ is larger than that of $B_z$; such a vertex exists, because the radius of $B_{\gamma(y)}$ is larger than the radius of $B_{\gamma(z)}$. Note that since $B_{\gamma(z)}$ has the largest radius among the balls assigned to the vertices of $\phi(z)$, we have $w_y\notin \phi(z)$. Consequently, $w_y\in \phi(yz)\cup \phi(y)$. Note that since $\phi$ is an induced model, this implies that the vertices $\{w_y\colon y\in V(K_h)\setminus \{z\}\}$ are pairwise different and non-adjacent, so the balls $\{B_{w_y}\colon y\in V(K_h)\setminus \{z\}\}$ are pairwise disjoint.

    By rescaling, we may assume that $B_{\gamma(z)}$ has radius $1$. Then for every $y\in V(K_h)\setminus \{z\}$, the ball $B_{w_y}$ has radius larger than $1$, hence we can find a ball $B'_y\subseteq B_{w_y}$ of radius $1$ that also intersects the ball assigned to the predecessor of $w_y$ on $P_{y}$. By the choice of $w_y$, all the balls assigned to the vertices of the prefix of $P_y$ from $z$ to $w_y$ (exclusive) have radii smaller than $1$. Since the length of $P_y$ is at most $6r+2$, it follows that $B'_y\subseteq \widehat{B}$, where $\widehat{B}$ is the ball with the same center as $B_{\gamma(z)}$, but with radius~$12r+5$. 

    We conclude that the balls $\{B_y'\colon y\in V(K_h)\setminus \{z\}\}$ are pairwise disjoint and contained in $\widehat{B}$. Since $\widehat{B}$ has volume $(12r+5)^d$ times larger than a ball of radius $1$, it follows that there can be at most $(12r+5)^d$ such balls. So $h\leq 1+(12r+5)^d$, as required.
\end{proof}

\bibliographystyle{plain}
\bibliography{ref}



\end{document}